\newtheorem{theorem}{Theorem}[section]
\newtheorem{proposition}[theorem]{Proposition}
\newtheorem{lemma}[theorem]{Lemma}
\theoremstyle{definition}
\newtheorem{assumption}{Assumption}[section]
\def\!{\mathop{\mathrm{!}}}
\def\R{\mathbb{ R}}
\def\N{\mathbb{ N}}
\def\F{\mathcal{F}}
\newlength{\boxwidth}
\date \today
\title{On assessing the accuracy of defect free energy computations}
\author[1]{Matthew Dobson}
\author[2]{Manh Hong Duong}
\author[2]{Christoph Ortner}
\affil[1]{Department of Mathematics and Statistics, UMass Amherst, 710 N Pleasant Street, Amherst, MA 01003, USA.}
\affil[2]{Mathematics Institute, University of Warwick, Coventry CV4 7AL, UK.} 
\begin{document}
\maketitle

\begin{abstract}
  We develop a rigorous error analysis for coarse-graining of defect-formation
  free energy. For a one-dimensional constrained atomistic system, we establish
  the thermodynamic limit of the defect-formation free energy and obtain
  explicitly the rate of convergence. We then construct a sequence of
  coarse-grained energies with the same rate but significantly reduced
  computational cost. We illustrate our analytical results through explicit computations for the
  case of harmonic potentials and through numerical simulations.
\end{abstract}


\section{Introduction}
Crystalline materials contain a variety of defects, such as vacancies,
interstitials and dislocations. Macroscopic properties of materials are strongly
dependent on the distribution of defects, in particular through the interaction
between dislocations and other defects \cite{callister2010}. Meso-scopic models
for defect interaction (e.g., dislocation dynamics, point defect diffusion)
usually take as input an atomistic simulation of a single, or few defects, from
which the meso-scopic model parameters can be extracted. A prototypical example
is the defect formation energy, which we discuss in more detail below.  A great
number of numerical schemes on spatial coarse-graining of the free energy have
been developed in the literature, see for instance in
\cite{DupuyTadmorMillerPhillips05, Marian2010} and references therein. However,
a rigorous analysis on the accuracy of these schemes is still underdeveloped; we
are only aware of the references \cite{BlancBrisLegollPatz2010,
  LuskinShapeev2014TMP}.

In this paper, we provide such a rigorous analysis for the computations of the
defect-formation free energy. We consider one-dimensional constrained atomistic
systems, which model perfect and defect materials respectively, with degrees of
freedom $u\in\R^N$. The system can be either influenced by external forces or
not. In the case without external forces, free energies
are respectively defined by
\begin{align}
&F_N(A)=-\beta^{-1}\log \int_{\R^{N-1}}\exp\Big[-\beta V(u)\Big]\,du_1\ldots du_{N-1}, \label{F_N(x,0)}
\\&F_N^P(A)=-\beta^{-1}\log \int_{\R^{N-1}}\exp\Big[-\beta V^P(u))\Big]\,du_1\ldots du_{N-1}\label{F_N(x,P)},
\end{align}
where
\begin{equation}
V(u) = \sum_{i=1}^N \psi(u_i-u_{i-1}), \quad V^P(u)=V(u)+P(u)
\end{equation}
are the energies associated to the perfect and defect materials, $V$ is the sum of bond energies $\psi(u_i-u_{i-1})$; $P:R^N\to\R$ models the defect. For simplicity, we assume that $P$ is a localised function and depends only on the first bond $P(u)=P(u_1-u_0)$; and finally $\beta>0$ is the temperature.

In the case with external forces, the perfect free energy is unchanged, but the deformed free energy is influenced by the external forces
\begin{equation}
\label{F_N(x,P) forces}
F_N^P(A)=-\beta^{-1}\log \int_{\R^{N-1}}\exp\Big[-\beta\sum_{i=1}^N \psi_i(u_i-u_{i-1})-\beta P(u_1)\Big]\,du_1\ldots du_{N-1},
\end{equation}
where $\psi_i(y)=\psi(y)+h_i y$ with $\{h_i\}_{i=1}^N$ representing the external forces.  

Note that the integrals \eqref{F_N(x,0)}, \eqref{F_N(x,P)} and \eqref{F_N(x,P) forces} are subjected to the boundary constraints
\begin{equation}
\label{eq: boundary constraint}
u_0=0, \quad u_{N}=NA
\end{equation}
so that the free energies depend on $N$ and $A$ as shown, and $P(u)=P(u_1)$.

The main quantity of interest in this paper is the \textit{defect-formation free
  energy} defined as the difference of the free energies
\begin{equation}
\label{eq: def of GN}
G_N(A):=F_N^P(A)-F_N(A)=-\frac{1}{\beta}\log\frac{\int_{\R^{N-1}}\exp(-\beta V^P(u))\,du}{\int_{\R^{N-1}}\exp(-\beta V(u))\,du}.
\end{equation}
This quantity is used to obtain the equilibrium defect
concentration~\cite{Putnis92, Walsh11} or to analyse defect
clustering~\cite{Seebauer09, Herbert2014}. A direct computation of $G_N[P]$ is
practically impossible due to the curse of dimensionality: one needs to compute
integrals over $\R^{N-1}$, which is an extremely high-dimensional space.

As a matter of fact, $N$ itself is an approximation parameter, the {\em exact}
defect formation free energy is given by the thermodynamic limit, letting
$N \to \infty$. Establishing this limit, and thus making precise what we mean by
the ``exact model'' is the first result of our paper. Once we have established
this, we search for an alternative scheme by which to approximate it, which
yields an improved accuracy/computational cost ratio.

The computation of $\lim_N G_N$ is a problem that is interesting in its own
right, but at the same time it serves as a natural benchmark problem for
exploring the relative accuracy/cost of various coarse-graining methods at
finite termperature.

The work \cite{BlancBrisLegollPatz2010} considers a similar model as ours, but
this work is focused on the scaling limit of the free energy, not the free
energy difference, which is a different scale. Furthermore, it does not take
defects into account. The work \cite{LuskinShapeev2014TMP} is in spirit much
closer to ours and in particular does take defects into account. The main
difference to our work is that \cite{LuskinShapeev2014TMP} consider ``low''
temperature via an asymptotic series expansion. Moreover, our coarse-grained
model has some close similarities with common quasicontinuum-type models.

Technically, to prove our main results, we will link the defect-formation free
energy to a ratio of the densities of certain random variables and employ
techniques from statistical mechanics. The latter have been used in the
literature, for example in \cite{GOVW09, Menz11}. However, the connections to
the defect-formation free energy, to the best of our knowledge, is new and
moreover, some technical modifications of the mentioned papers were required.


\subsection{Assumptions and main results}
For simplicity of notation we set $\beta = 1$ throughout the paper. Moreover, we
make the following standing assumptions on the bond energy $\psi$, the defect
$P$ and the external forces $\{h_i\}_{i=1}^N$.
\begin{assumption}
  \label{ass: assumption}
  $\psi, P \in C^2(\R)$ and there exist positive constants
  $\kappa_1\leq\kappa_2$ and $\varsigma_1\leq\varsigma_2$ such that
  \begin{equation}
    \label{assump on psi and P}
    \kappa_1 \leq \psi''\leq \kappa_2,\quad \varsigma_1\leq (\psi+P)''\leq \varsigma_2.
  \end{equation}
\end{assumption}
\begin{assumption} 
  \label{assum: assumption on asssumption forces}
  $\mathbf{h} := (h_1,h_2 \ldots) \in l^1$; we can then define
  $H := \sum_{i = 2}^\infty h_i$.
\end{assumption}


{\bf Step 1: Thermodynamic limit: } Our first result concerns the {\em rate of
  convergence} of defect formation free energy. Its proof is given in Theorems
\ref{theo: main theorem constrained case} and \ref{theo: thermodynamic limit
  with forces}.

\begin{theorem}
  \label{theo: limit G_N}
  There exists $G_\infty \in C^\infty(\R)$, such that, for all $A\in\R$
  \begin{displaymath}
    | G_N(A) - G_\infty(A) | \lesssim N^{-1}.
  \end{displaymath}
\end{theorem}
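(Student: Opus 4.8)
The plan is to reduce the ratio of partition functions defining $G_N$ to a ratio of probability densities of sums of independent random variables, and then to extract its asymptotics from a sharp local central limit theorem. First I would pass from positions to bond variables $y_i := u_i-u_{i-1}$. The map $(u_1,\dots,u_{N-1})\mapsto(y_1,\dots,y_{N-1})$ has unit Jacobian and the constraint \eqref{eq: boundary constraint} becomes $\sum_{i=1}^N y_i = NA$, with $y_N$ eliminated. Setting $f := e^{-\psi}/c$, $c := \int_\R e^{-\psi}$ (finite and positive since $\psi$ is uniformly convex by Assumption \ref{ass: assumption}, hence grows at least quadratically), the perfect partition function equals $c^N\rho_N(NA)$, where $\rho_N$ is the density of $S_N = Y_1+\dots+Y_N$ with the $Y_i$ i.i.d.\ of density $f$. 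In the defect case only the first bond carries the extra factor $e^{-P}$, so its partition function equals $c^{N-1}c_P\,\tilde\rho_N(NA)$, where $\tilde\rho_N$ is the density of $\tilde Y_1+Y_2+\dots+Y_N$, with $\tilde Y_1$ of density $\tilde f := e^{-(\psi+P)}/c_P$ and $c_P := \int_\R e^{-(\psi+P)}$. Hence
\[
  G_N(A) = -\log\frac{c_P}{c} - \log\frac{\tilde\rho_N(NA)}{\rho_N(NA)},
\]
and everything reduces to the asymptotics of the density ratio.

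Next, conditioning on the first bond gives $\rho_N(NA)=\int_\R f(t)\,\rho_{N-1}(NA-t)\,dt$, and likewise for $\tilde\rho_N$ with $\tilde f$ in place of $f$; crucially the $(N-1)$-fold bulk density $\rho_{N-1}$ is common to both. To analyse $\rho_{N-1}(NA-t)=\rho_{N-1}\bigl((N-1)A+(A-t)\bigr)$ near the bulk mean, I would introduce the Cramér tilt $\lambda^\ast=\lambda^\ast(A)$ solving $\int_\R y\,e^{\lambda y}f(y)\,dy\big/\int_\R e^{\lambda y}f(y)\,dy = A$; this $\lambda^\ast$ exists, is unique, and depends smoothly on $A$ because the cumulant generating function of $f$ is strictly convex, again by uniform convexity of $\psi$. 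The technical heart is then a local central limit theorem with explicit $O(N^{-1})$ remainder (a local Cramér theorem in the spirit of \cite{GOVW09, Menz11}), from which, uniformly on compact $t$-sets,
\[
  \frac{\rho_{N-1}(NA-t)}{\rho_{N-1}(NA-t')} = e^{\lambda^\ast(t-t')}\bigl(1+O(N^{-1})\bigr),
\]
the common Gaussian prefactor cancelling in the ratio. Integrating against $f$ and $\tilde f$, and using the uniform convexity of both $\psi$ and $\psi+P$ to make the contribution of large $|t|$ exponentially small (which also guarantees the limiting integrals converge), yields
\[
  \frac{\tilde\rho_N(NA)}{\rho_N(NA)} = \frac{\int_\R \tilde f(t)\,e^{\lambda^\ast t}\,dt}{\int_\R f(t)\,e^{\lambda^\ast t}\,dt}\bigl(1+O(N^{-1})\bigr).
\]
Since the limiting ratio is positive and bounded away from $0$ and $\infty$, taking logarithms gives $G_N(A)=G_\infty(A)+O(N^{-1})$ with $G_\infty(A):=-\log(c_P/c)-\log\bigl(\int_\R\tilde f(t)\,e^{\lambda^\ast t}\,dt\big/\int_\R f(t)\,e^{\lambda^\ast t}\,dt\bigr)$, an expression in which the normalising constants cancel to leave a clean ratio of tilted integrals. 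Smoothness $G_\infty\in C^\infty$ then follows from smoothness of $A\mapsto\lambda^\ast(A)$ together with differentiation under the integral sign, justified by the same tail bounds.

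Finally, the forced case \eqref{F_N(x,P) forces} is handled along the same lines, the only change being that the bulk variables are independent but not identically distributed, with densities $\propto e^{-\psi(y)-h_i y}$, while the perfect reference keeps density $\propto e^{-\psi}$. The $l^1$-summability of $\mathbf h$ (Assumption \ref{assum: assumption on asssumption forces}) makes the per-bond tilts a summable perturbation of a common background, so a Lindeberg-type local limit theorem still delivers the $O(N^{-1})$ rate, and the limit depends on the forces only through $H=\sum_{i\ge2}h_i$. The main obstacle throughout is the local limit theorem with a genuine, uniform $O(N^{-1})$ remainder that survives the single defect perturbation: a crude local central limit theorem giving only $o(1)$ control of the density would not suffice, and one must carry the first Edgeworth correction and verify that it cancels in the density ratio, leaving precisely the first-order rate claimed.
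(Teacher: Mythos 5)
Your reduction to densities of sums of independent bond variables and your reliance on a local CLT with a genuine $O(N^{-1})$ remainder are exactly the engine of the paper's proof (Lemma \ref{lem: aulem 1} and Proposition \ref{prop: limit of ratio of densities}, which lean on \cite{GOVW09, Menz11}), but your decomposition is genuinely different and, for the unforced case, arguably cleaner. The paper applies the Cram\'er tilt \emph{separately} to the perfect and defect ensembles, which forces it to introduce a defect-dependent tilt $\sigma_P^N$ and to spend Lemma \ref{lem:sigma_P-sigma_0} and most of Proposition \ref{prop: limits of difference of G} controlling $|\sigma_P^N-\sigma_0|=O(N^{-1})$ and its effect on the energy term $N[W_N^P(A)-W(A)]$; the densities are then only ever evaluated at their centre $\xi=0$, where the first Edgeworth term vanishes. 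You instead condition on the first bond, so the $(N-1)$-fold bulk density $\rho_{N-1}$ is literally common to numerator and denominator and a single tilt $\lambda^*=W'(A)$ suffices; the price is that you must evaluate $\rho_{N-1}$ at points displaced by $O(1)$ from $(N-1)A$, where the first Edgeworth correction does not vanish but, as you correctly note, contributes only at order $(t-t')/N$ to the ratio. Your limiting expression agrees with \eqref{eq: Ginf} once the normalising constants cancel, and your smoothness argument (implicit function theorem for $\lambda^*(A)$ using $\Psi'\geq 1/\kappa_2$, plus differentiation under the integral) is sound and is in fact more than the paper writes down.

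One genuine slip is in your sketch of the forced case: the limit does \emph{not} depend on the forces only through $H=\sum_{i\ge 2}h_i$. Once the bulk bonds carry the tilts $h_i$, the common-bulk cancellation you exploit in the unforced case is lost, and the limit picks up the constant $\inf_{v_1=0}J_\infty(A;v)=\sum_{i\ge 2}\inf_z\big[W(A+z)-W(A)-W'(A)z+h_iz\big]$, which depends on the individual $h_i$ (in the harmonic case it produces the term $-\bar H/(4\alpha)$ with $\bar H=\sum_{i\ge 2}h_i^2$), as well as on $h_1$ separately through $\psi_1$ in the defect integral; see Theorems \ref{theo: coarse-grained energy} and \ref{theo: thermodynamic limit with forces} and the explicit harmonic computation in Section \ref{sec: appendix}. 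The Lindeberg-type strategy you propose is the right one, but the bookkeeping of these per-bond contributions is where the real work of Section \ref{sec: forces} lies, and your sketch as stated would produce the wrong limit there.
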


{\bf Step 2: Coarse-graining: } The (finite-temperature) Cauchy-Born strain
energy function is given by \cite{BlancLegoll2013}
\begin{equation}
  \label{eq: Cauchy-Born energy}
  W(A) = \sup_{\sigma\in \R}\Big\{\sigma A - 
  \log \int_{\R}\exp(-\beta \psi(y) + \sigma y)\,dy\Big\}.
\end{equation}
Taking a continuum model $\int [W(u') + h u'] dx$ outside the defect core
$\{0, 1\}$ and then discretising it with the atomistic grid
$\{1, 2, \dots N\}$ we obtain
\begin{displaymath}
  E^{\rm cb}_N(u) := \sum_{i = 2}^N \Big[ W(u_i')
  - W(A) + h_i u_i' \Big],
  \qquad \text{where } u_i' = u_i - u_{i-1},
\end{displaymath}
with admissible displacements $u : \{0, \dots, N\} \to \R$ satisfying
$u_0 = 0, u_N = A N$. After replacing $u_i = Ai + v_i$, summation by parts, and
taking the formal limit $N \to \infty$, yields
\begin{displaymath}
  E^{\rm cb}(u) = W'(A) (A-u_1) + A H
  + \sum_{i = 2}^\infty \Big[ W(A+v_i') - W(A) - W'(A) v_i' + h_i v_i' \Big].
\end{displaymath}
It is important to note here that $E^{\rm cb}$ is formulated in a way that
ensures it is well-defined for arguments with $v' \in \ell^2$.

We obtain the following characterisation of $G_\infty(A)$ in terms of
$E^{\rm cb}$.

\begin{theorem}
  \label{th:intro:cg}
  Let $E^{\rm cg}(A, y) := \inf_{u \in \R^\N, u_1 = y} E^{\rm cb}(u)$, then 
  \begin{displaymath}
    G_\infty(A) = - \log \frac{\int_\R 
      \exp\big(  - P(y) - \psi_1(y) - E^{\rm cg}(A, y) \big) 
      dy}{ \int_\R \exp\big( -\psi(y) - E^{\rm cg}_{{\bf h = 0}}(A, y) \big) dy}.
  \end{displaymath}
  where $E^{\rm cg}_{\bf h=0}$ denotes the coarse-grained energy with
  $h_j \equiv 0$.
\end{theorem}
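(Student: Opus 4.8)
The plan is to reduce the $(N-1)$-dimensional ratio defining $G_N$ to a ratio of one-dimensional integrals over the defect bond $y = u_1$, and then to pass to the thermodynamic limit with the local central limit theorem that underlies Theorem~\ref{theo: limit G_N}. First I would integrate out the interior degrees of freedom $u_2,\dots,u_{N-1}$. Changing variables to the bond increments $X_i = u_i - u_{i-1}$ ($i\ge 2$) and using $u_0=0,\ u_1=y,\ u_N=NA$, the constraint becomes $\sum_{i=2}^N X_i = NA-y$, so the interior integral is exactly the convolution
\[
  Q_N(A,y) := (g_2 * \cdots * g_N)(NA - y), \qquad g_i(x) = e^{-\psi_i(x)},
\]
that is, $\big(\prod_{i=2}^N Z_i\big)$ times the density of $S_N=\sum_{i=2}^N X_i$ at $NA-y$, where $X_i\propto g_i$ and $Z_i=\int_\R g_i$. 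Separating the first bond then gives
\[
  G_N(A) = -\log\frac{\int_\R e^{-P(y)-\psi_1(y)}\,Q_N^{\mathbf h}(A,y)\,dy}{\int_\R e^{-\psi(y)}\,Q_N^{\mathbf 0}(A,y)\,dy},
\]
with $Q_N^{\mathbf h}$ built from the forced bonds $\psi_i=\psi+h_i(\cdot)$ and $Q_N^{\mathbf 0}$ from the force-free bonds.

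Next I would extract the $N$-asymptotics of $Q_N$ by an exponential change of measure. Writing $\Lambda(\tau)=\log\int_\R e^{-\psi(y)+\tau y}\,dy$ and choosing the Legendre-dual tilt $\sigma=W'(A)$, characterised by $\Lambda'(\sigma)=A$ (so a bond tilted by $\sigma$ has mean $A$), the tilting identity gives
\[
  Q_N^{\mathbf h}(A,y) = \Big(\prod_{i=2}^N e^{\Lambda(\sigma - h_i)}\Big)\,e^{-\sigma(NA - y)}\,p_{S_N}^\sigma(NA - y),
\]
where $p_{S_N}^\sigma$ is the tilted density, with mean $m_N=\sum_{i=2}^N\Lambda'(\sigma-h_i)=(N-1)A+O(1)$ and variance $\Sigma_N=\sum_{i=2}^N\Lambda''(\sigma-h_i)$ (the $O(1)$ holding since $\mathbf h\in\ell^1$). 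For fixed $y$ the evaluation point $NA-y$ sits at bounded distance from $m_N$, so the local central limit theorem yields $p_{S_N}^\sigma(NA-y)\sim(2\pi\Sigma_N)^{-1/2}$. Collecting the $y$-independent factors into a constant $C_N^{\mathbf h}$ and using $\Lambda(\sigma)-\sigma A=-W(A)$, this exhibits $Q_N^{\mathbf h}(A,y)\sim C_N^{\mathbf h}\,e^{-E^{\rm cg}(A,y)}$, and likewise $Q_N^{\mathbf 0}(A,y)\sim C_N^{\mathbf 0}\,e^{-E^{\rm cg}_{\mathbf h=0}(A,y)}$.

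The identification of the $y$-dependent exponent with $E^{\rm cg}$, and the cancellation of the prefactors, is then pure Legendre duality. Minimising $E^{\rm cb}$ over $u$ with $u_1=y$ fixed decouples over the bonds: each term $W(A+v_i')-W(A)-W'(A)v_i'+h_iv_i'$ is minimised at the bond with $W'(A+v_i')=\sigma-h_i$, with minimum value $\Lambda(\sigma)-\Lambda(\sigma-h_i)-Ah_i$; summing and cancelling $AH=\sum_{i\ge2}Ah_i$ gives
\[
  E^{\rm cg}(A,y) = \sigma(A-y) + \sum_{i=2}^\infty\big[\Lambda(\sigma)-\Lambda(\sigma-h_i)\big], \qquad E^{\rm cg}_{\mathbf h=0}(A,y) = \sigma(A-y),
\]
which is exactly what the tilting computation produces. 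Comparing prefactors, $C_N^{\mathbf h}/C_N^{\mathbf 0}$ reduces to $\exp\big(\sum_{i>N}[\Lambda(\sigma)-\Lambda(\sigma-h_i)]\big)\sqrt{(N-1)\Lambda''(\sigma)/\Sigma_N}$, which tends to $1$ because $\mathbf h\in\ell^1$ (Assumption~\ref{assum: assumption on asssumption forces}) makes the tail of the series vanish and $\Sigma_N=(N-1)\Lambda''(\sigma)+O(1)$. Hence the prefactors cancel, and together with $G_N\to G_\infty$ from Theorem~\ref{theo: limit G_N} the claimed formula follows.

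The main obstacle is making the local central limit step rigorous uniformly enough to pass the limit inside the $y$-integrals. Two points need care: the local CLT must hold for the non-identically-distributed, force-perturbed bonds with an error uniform in the evaluation point, and one must dominate $Q_N^{\mathbf h}(A,y)/C_N^{\mathbf h}$ by a fixed function integrable against $e^{-P(y)-\psi_1(y)}$ (respectively $e^{-\psi(y)}$) in order to invoke dominated convergence. Both are controlled by Assumption~\ref{ass: assumption}: $\kappa_1\le\psi''\le\kappa_2$ makes every tilted bond density uniformly strongly log-concave, supplying the uniform moment and characteristic-function bounds needed for the local CLT, while the quadratic growth of $\psi$ confines the mass in $y$ and furnishes the integrable Gaussian majorant. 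This is precisely where the refinements of the statistical-mechanics estimates of \cite{GOVW09, Menz11} enter.
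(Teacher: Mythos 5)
Your argument is correct in outline, and it takes a genuinely different route from the paper's. The paper never conditions on the defect bond: it keeps all $N$ bonds inside a single tilted sum (Lemma~\ref{lem: aulem 1}), with a {\em perturbed} tilt $\sigma_P^N$ chosen so that the total mean is $NA$, writes $G_N = N[W_N^P-W] + \log(g_{N,A}(0)/g^P_{N,A}(0))$, extracts the one-dimensional integral from the asymptotics of $N[W_N^P-W]$ (Proposition~\ref{prop: limits of difference of G}, which hinges on the $O(N^{-1})$ control of $\sigma_P^N-\sigma_0$ in Lemma~\ref{lem:sigma_P-sigma_0}), kills the density ratio by a local CLT at the mean (Proposition~\ref{prop: limit of ratio of densities}), and only {\em afterwards} identifies the resulting explicit formula with the variational object $E^{\rm cg}$ (Theorems~\ref{theo: CG without forces} and \ref{theo: coarse-grained energy}, and in the forced case the Legendre-duality comparison of $N[W_N^P-W]$ with $E_N^{\rm cg}$ inside the proof of Theorem~\ref{theo: thermodynamic limit with forces}). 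You instead leave $u_1=y$ unintegrated, recognise the interior integral as a convolution at $NA-y$, tilt at the {\em unperturbed} $\sigma=W'(A)$, and let the local CLT produce $e^{-E^{\rm cg}(A,y)}$ directly, with the variational identification of the exponent built in via the decoupled per-bond minimisation. What your route buys is conceptual transparency -- the representation as a canonical average over the defect bond against a coarse-grained environment free energy appears for structural reasons rather than by a posteriori algebra, and you avoid the $\sigma_P^N$ bookkeeping entirely. What it costs is that the local CLT must now be applied at evaluation points $NA-y$ that sit a $y$-dependent $O(1+|A-y|)$ distance from the mean, uniformly enough in $y$ to pass the limit inside the outer integral; the paper only ever needs the CLT at the centre. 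You correctly flag this as the main obstacle and identify the right controls (uniform strong log-concavity for the characteristic-function decay, as in Lemma~\ref{lem: estimate of lambda}, plus a majorant of the form $Ce^{\sigma y}$ which is integrable against $e^{-(\psi+P)(y)}$), so I regard the proposal as a sound alternative proof, modulo writing out that uniform local CLT in the same detail the paper devotes to Proposition~\ref{prop: limit of ratio of densities}.
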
 

{\bf Step 3: Approximation: } Thus, we have replaced a limit of high-dimensional
integrals by a one-dimensional integral over a coarse-grained energy functional
whose evaluation requires the solution of an infinite-dimensional variational
problem. In our next step, we replace $E^{\rm cg}(A, y)$ with a
finite-dimensional approximation.

Let 
\begin{equation*}
  E_N^{\rm cg}(A, y) := \inf\limits_{\substack{u\in\R^N\\ u_1=y, u_N=N A}}\,
  E^{\rm cb}_N(u)
\end{equation*}
and
\begin{displaymath}
  G_N^{\rm cg}(A) := 
  - \log \frac{\int_\R \exp\big(  - P(y) - \psi_1(y) - E^{\rm cg}_N(A, y) \big) 
      dy}{ \int_\R \exp\big( -\psi(y) - E^{\rm cg}_{N, {\bf h = 0}}(A, y) \big) dy}.
\end{displaymath}
Here we have chosen $E_N^{\rm cg}$ as the most basic approximation scheme to
$E^{\rm cg}$, but far more sophisticated choices could be explored. With this
definition we obtain the following result.

\begin{theorem}
  \label{th:intro:cg-approx}
  (i) $G_N^{\rm cg}(A)$ is well-defined for all $A \in \R$.
  
  (ii) For all $A \in \R$ we have the estimate
  \begin{displaymath}
    \big| G_N^{\rm cg}(A) - G_\infty(A) \big| \lesssim N^{-1}.
  \end{displaymath}
\end{theorem}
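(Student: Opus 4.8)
The plan is to exploit that $G_N^{\rm cg}$ and $G_\infty$ have \emph{identical} structure---a one-dimensional log-ratio of Gibbs integrals---differing only through the replacement of the infinite-dimensional variational energy $E^{\rm cg}(A,\cdot)$ by its finite-dimensional surrogate $E_N^{\rm cg}(A,\cdot)$ (and of $E^{\rm cg}_{\mathbf h=0}$ by $E^{\rm cg}_{N,\mathbf h=0}$). Writing $\Delta_N(y):=E_N^{\rm cg}(A,y)-E^{\rm cg}(A,y)$ and letting $\mu$ be the infinite-volume Gibbs measure $d\mu(y)\propto e^{-P(y)-\psi_1(y)-E^{\rm cg}(A,y)}\,dy$ from the numerator of $G_\infty$, the numerator's contribution to $G_N^{\rm cg}-G_\infty$ is exactly $-\log\int_\R e^{-\Delta_N(y)}\,d\mu(y)$, with the analogous ($\mathbf h=0$) expression for the denominator. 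Thus the theorem reduces to one quantitative claim: that $\Delta_N$ is of size $N^{-1}$ in an $L^1(d\mu)$ sense.

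Before estimating $\Delta_N$ I would settle part (i) and the well-posedness of all the objects above. Since Assumption~\ref{ass: assumption} gives $\psi''\ge\kappa_1>0$, the Cauchy--Born energy density $W$ is smooth and uniformly convex ($W''\ge c>0$), so $E^{\rm cb}_N$ is a coercive, strictly convex functional on the finite-dimensional admissible set; hence the infimum defining $E_N^{\rm cg}(A,y)$ is attained, finite, convex in $y$, and bounded below by an affine function of $y$. Together with the quadratic growth $\psi_1(y)\gtrsim \tfrac{\kappa_1}{2}y^2-C$, this makes the integrand $e^{-P-\psi_1-E_N^{\rm cg}}$ decay like a Gaussian, so the defining integrals converge and are strictly positive---this is part (i)---and $\mu$ has finite moments of every polynomial order, which I use below.

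The core is the pointwise variational estimate $|\Delta_N(y)|\lesssim N^{-1}(1+y^2)$ (and its $\mathbf h=0$ analogue). I would prove it by constructing mutually admissible competitors. For an \emph{upper} bound on $E_N^{\rm cg}$, take the minimiser $u^\ast$ of the infinite problem, truncate it to $\{0,\dots,N\}$, and correct it to enforce the extra constraint $u_N=NA$ (i.e.\ $v_N=0$); testing $E^{\rm cb}_N$ against this modified profile shows, via uniform convexity, that the correction costs an energy of order $|v^\ast_N|^2/N$. For the matching \emph{lower} bound, extend the finite minimiser by the homogeneous state for $i>N$ to obtain an admissible competitor for the infinite problem. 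The two bounds differ by (a) the boundary-layer correction cost and (b) the tail bond energy $\sum_{i>N}$ of the infinite minimiser. Both are governed by the relaxation of the equilibrium strain profile to the homogeneous state: the Euler--Lagrange equations of the uniformly convex energy form a discrete elliptic system, and the global constraint $u_N=NA$ acts through a Lagrange multiplier $\lambda_N=O\big((1+|y-A|)/N\big)$; squaring and summing over the $N$ bonds yields precisely the dominant $O(N^{-1}(1+y^2))$ term. Since $v^\ast_N$ is affine in $y$, the $y^2$ weight is unavoidable but harmless.

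The step demanding real work---and the main obstacle---is this decay/stability analysis, carried out \emph{uniformly in the constraint value $y$} and in the presence of the forcing $\mathbf h$. I would reuse the elliptic-regularity and statistical-mechanics estimates already developed for Theorems~\ref{theo: main theorem constrained case} and~\ref{theo: thermodynamic limit with forces}, which control exactly this finite-versus-infinite comparison for the chain. Two points need care. First, the forcing enters $\Delta_N$ only through tail sums---$A\sum_{i>N}h_i$ (the mismatch between $A\sum_{i=2}^N h_i$ and $AH$) and $\sum_{i>N}h_i^2$ (the tail bond energy); these are the same tail quantities controlled in the thermodynamic-limit analysis behind Theorem~\ref{theo: limit G_N} under Assumption~\ref{assum: assumption on asssumption forces}, and are absorbed into the final estimate at the same order. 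Second, converting the $L^1(d\mu)$ bound $\int|\Delta_N|\,d\mu\lesssim N^{-1}\int(1+y^2)\,d\mu\lesssim N^{-1}$ into a bound on $\log\int e^{-\Delta_N}\,d\mu$ is a routine Laplace-type argument: Jensen's inequality gives the lower bound, while for the upper bound I split the integral at $|y|\lesssim N^{1/2}$ and use the Gaussian tails of $\mu$ on the complement, where $\Delta_N$ is at worst quadratic.
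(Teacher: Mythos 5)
Your proposal follows essentially the same route as the paper: the difference $G_N^{\rm cg}(A)-G_\infty(A)$ is decomposed into two terms of the form $\log\big\langle \exp[E_N^{\rm cg}(A,y)-E^{\rm cg}(A,y)]\big\rangle_{\zeta}$ with respect to the one-dimensional Gibbs measures appearing in $G_\infty$, and each is shown to be $O(N^{-1})$ by combining the pointwise bound $|E_N^{\rm cg}(A,y)-E^{\rm cg}(A,y)|\lesssim N^{-1}\big(1+(A-y)^2\big)$ with the Gaussian integrability of those measures (via $|e^t-1|\le |t|e^{|t|}$ in the paper, via Jensen and a tail split in your version --- an immaterial difference). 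The only other deviation is that you re-derive the pointwise bound by a truncation-and-competitor argument, whereas the paper obtains it by solving the Euler--Lagrange equations explicitly (Theorems \ref{theo: CG without forces} and \ref{theo: coarse-grained energy}); but that estimate belongs to the preceding theorem rather than to the statement at hand, so this does not change the substance of the proof.
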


The sharpness of the results of Theorems \ref{theo: limit G_N} and
\ref{th:intro:cg} are demonstrated through explicit computations in the harmonic
case $\psi(y)=\alpha|y|^2$ and $P(y)=\beta |y|^2$ in Section \ref{sec: appendix}
and in numerical simulations in Section \ref{sec: numerical}.

{\bf Interpretation: } Statements (ii) of Theorems \ref{th:intro:cg} and
\ref{th:intro:cg-approx} imply that $G_\infty(A)$ can be computed from two
one-dimensional integrals, but this extreme reduction of computational
complexity is only due to the special one-dimensional structure of our model
problem and cannot in general be reproduced. 

The structure in our construction that can be expected more generally though is
that $G_\infty(A)$ can be {\em approximated by} a low-dimensional canonical
average with respect to a coarse-grained energy that is obtained by a
variational problem in the exterior of the computational domain.
%
%
In our case the coarse-grained measure is one-dimensional but in general one may
still expect it to be relatively low-dimensional. A Langevin or other type of
Markov-Chain type algorithm can now be employed to compute $G_\infty(A)$;
cf. Section \ref{sec: numerical}.

Of course, the evaluation of $E^{\rm cg}(y)$ is in general impossible, and an
approximation needs to be performed. For example, $E_N^{\rm cg}(A, y)$ (and its
derivatives) is computable with a reasonably low $O(N)$ cost. Note that $W$ itself may
be costly to evaluate, but it could be easily precomputed to high accuracy
e.g. via Taylor expansions or spline techniques. The $O(N)$ cost could be
reduced further if we employ a quasi-continuum style coarse-graining of
$E^{\rm cg}_N$. 


%
%
%
%
%
%
%
%
%
%
%
%
%
\subsection{Organisation of the paper}
The rest of the paper is structured as follows. In Section \ref{sec: Noforces}
we study the case without external forces.  Extension to the case
with external forces is shown in Section \ref{sec: forces}. In Section
\ref{sec: appendix}, we provide explicit computations for the harmonic case. Finally, in Section \ref{sec: numerical}, we present some numerical simulations. 
%

\section{The case without external forces}
\label{sec: Noforces}
In this section, we analyse the case without external forces.
\subsection{Thermodynamic limit}
In this section, we prove Theorem \ref{theo: limit G_N} for the case without external forces by establishing the existence of the thermodynamic limit $G_\infty$ and the rate of convergence of $G_N$ to $G_\infty$. The main result of this section is the following theorem.
\begin{theorem} 
\label{theo: main theorem constrained case} 
Suppose that Assumption \ref{ass: assumption} is satisfied.
Then the thermodynamic limit is given by
\begin{equation}
\label{eq: Ginf}
G_\infty(A)=-\log\frac{\int_{\R} \exp[-(\psi+P)(y)+W'(A) y]\,dy}{\int_{\R} \exp[-\psi(y)+W'(A) y]\,dy}.
\end{equation}
Moreover, for all $A\in \R$, we have the estimate
\begin{equation}
|G_N(A)-G_\infty(A)|\lesssim N^{-1}.
\end{equation}
\end{theorem}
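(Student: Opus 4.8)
The plan is to reduce the ratio of $(N-1)$-dimensional integrals defining $G_N$ to a ratio of probability densities evaluated at a single point, and then to control that ratio by a local central limit theorem. First I would introduce the exponentially tilted bond density
\[
  p_\sigma(y) := \frac{e^{-\psi(y)+\sigma y}}{Z(\sigma)}, \qquad Z(\sigma) := \int_\R e^{-\psi(y)+\sigma y}\,dy,
\]
which is well defined since Assumption~\ref{ass: assumption} forces $\psi$ to grow quadratically. Writing $\Lambda(\sigma) := \log Z(\sigma)$, the definition \eqref{eq: Cauchy-Born energy} of $W$ is precisely the Legendre transform $W = \Lambda^*$, so the maximiser $\sigma = W'(A)$ is characterised by $\Lambda'(\sigma) = \mathbb{E}_{p_\sigma}[Y] = A$; that is, the tilt is chosen so that the mean bond length equals $A$. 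I then change variables in \eqref{eq: def of GN} from displacements $(u_1,\dots,u_{N-1})$ to bonds $y_i = u_i - u_{i-1}$ (a unimodular map), under which the constraint \eqref{eq: boundary constraint} becomes $\sum_{i=1}^N y_i = NA$. Inserting the factor $1 = e^{\sigma(\sum_i y_i - NA)}$ and normalising bond by bond, the constrained integral factorises as
\[
  \int_{\R^{N-1}} e^{-V(u)}\,du = e^{-\sigma N A}\, Z(\sigma)^N\, \rho_N(NA),
\]
where $\rho_N = p_\sigma^{*N}$ is the density of the sum of $N$ i.i.d. bonds drawn from $p_\sigma$. Because $P$ acts only on the first bond, the defect integral produces the same prefactor with $p_\sigma$ replaced by $\tilde p_\sigma \propto e^{-(\psi+P)+\sigma y}$ in the first slot; writing $\rho_N^P$ for the resulting density I obtain
\[
  G_N(A) = -\log\frac{\tilde Z(\sigma)}{Z(\sigma)} - \log\frac{\rho_N^P(NA)}{\rho_N(NA)}, \qquad \tilde Z(\sigma) := \int_\R e^{-(\psi+P)(y)+\sigma y}\,dy.
\]
The first term is exactly $G_\infty(A)$ as claimed in \eqref{eq: Ginf}, so it remains to show $\bigl|\log[\rho_N^P(NA)/\rho_N(NA)]\bigr| \lesssim N^{-1}$.

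Second, both densities are convolutions with the common density $\rho_T := p_\sigma^{*(N-1)}$ of the last $N-1$ bonds, so their difference is $(g * \rho_T)(NA)$ with $g := \tilde p_\sigma - p_\sigma$. Crucially $\int_\R g\,dy = 0$, since both factors are normalised, and $\rho_T$ is concentrated at $(N-1)A$ with width $\sim\sqrt N$, hence slowly varying on the $O(1)$ scale on which $g$ lives. Taylor expanding $\rho_T(NA - y)$ in $y$ and using $\int g = 0$ to annihilate the constant term leaves
\[
  (g*\rho_T)(NA) = -\rho_T'(NA)\!\int_\R y\,g(y)\,dy + \tfrac12 \rho_T''(NA)\!\int_\R y^2 g(y)\,dy + \cdots.
\]
A local central limit theorem gives $\rho_T(NA)\asymp N^{-1/2}$, while $\rho_T'(NA)$ and $\rho_T''(NA)$ decay like $N^{-3/2}$ (the point $NA$ sits an $O(1)$ distance from the mean $(N-1)A$, where both the Gaussian derivative and the leading Edgeworth correction are small), and the moments $\int y^k g\,dy$ are $O(1)$. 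Hence $(g*\rho_T)(NA) = O(N^{-3/2})$, whereas the denominator $\rho_N(NA)\asymp N^{-1/2}$, so the ratio is $1 + O(N^{-1})$ and $\log[\rho_N^P(NA)/\rho_N(NA)] = O(N^{-1})$, which is the asserted estimate. (The regularity $G_\infty \in C^\infty$ from Theorem~\ref{theo: limit G_N} then follows by differentiating \eqref{eq: Ginf} in $A$, since $A \mapsto W'(A)$ is smooth.)

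The genuinely technical step, and the main obstacle, is the local central limit theorem with \emph{derivative} control that is \emph{uniform in} $A$: I need not merely $\rho_T(NA) \asymp N^{-1/2}$ but sharp $N^{-3/2}$ bounds on $\rho_T'$ and $\rho_T''$ near the mean, together with a quantitative remainder so that the Taylor expansion above is rigorous rather than formal. I would establish this from the Fourier representation $\rho_T = (\hat p_\sigma^{\,N-1})^\vee$, splitting the frequency integral into a neighbourhood of the origin, where an Edgeworth expansion of $\log\hat p_\sigma$ applies, and its complement, where $|\hat p_\sigma| < 1$ forces exponential decay; alternatively one can invoke the canonical-ensemble local-CLT machinery of \cite{GOVW09, Menz11}. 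Uniformity in $A$ is the delicate point, since $\sigma = W'(A)$ ranges over all of $\R$; here Assumption~\ref{ass: assumption} is exactly what is required, as $\kappa_1 \le \psi'' \le \kappa_2$ yields, via Brascamp--Lieb inequalities, the two-sided bound $\kappa_2^{-1} \le \operatorname{Var}_{p_\sigma}[Y] \le \kappa_1^{-1}$ and analogous uniform control of the higher cumulants of $p_\sigma$ and $\tilde p_\sigma$, rendering all implied constants independent of $A$.
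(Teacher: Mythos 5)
Your proposal is correct in strategy and the key orders of magnitude check out, but it takes a genuinely different route from the paper. The paper (Propositions \ref{prop: difference of energy}--\ref{prop: limit of ratio of densities}) tilts the perfect and defect chains with \emph{two different} parameters, $\sigma_0=W'(A)$ and the maximiser $\sigma_P^N$ of $W_N^P$, chosen so that each tilted sum is centred exactly at $NA$; this splits $G_N$ into an ``energy'' part $N[W_N^P(A)-W(A)]$, which converges to $G_\infty$ only up to $O(N^{-1})$ and must be estimated via $|\sigma_P^N-\sigma_0|\lesssim N^{-1}$ (Lemma \ref{lem:sigma_P-sigma_0}) together with exponential-moment bounds, plus a ratio of two densities each evaluated at its own centre, compared via a Fourier split. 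You instead use a \emph{single} tilt $\sigma=W'(A)$ for both chains, which makes the partition-function part of the identity exactly $-\log(\tilde Z(\sigma)/Z(\sigma))=G_\infty(A)$ with no error at all, and pushes the entire $O(N^{-1})$ discrepancy into the convolution identity $\rho_N^P-\rho_N=(\tilde p_\sigma-p_\sigma)*\rho_T$ with $\int(\tilde p_\sigma-p_\sigma)=0$. What this buys is the elimination of Lemma \ref{lem:sigma_P-sigma_0} and essentially all of Proposition \ref{prop: limits of difference of G}; what it costs is that $\rho_N^P$ is no longer centred at $NA$ (the first bond has mean $\tilde m\ne A$), so you need pointwise bounds on $\rho_T'$ and $\rho_T''$ at a point an $O(1)$ distance from the mean of $\rho_T$ — specifically the refined bound $\rho_T'(NA)=O(N^{-3/2})$ rather than the crude $\sup|\rho_T'|=O(N^{-1})$, without which you would only get rate $N^{-1/2}$. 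You correctly isolate this as the main technical burden, and your proposed proof (Edgeworth expansion of $\log\hat p_\sigma$ near the origin, exponential decay of $|\hat p_\sigma|^{N-1}$ away from it, uniformity from $\kappa_1\le\psi''\le\kappa_2$) is exactly the machinery the paper deploys in Lemma \ref{lem: estimate of lambda} and Proposition \ref{prop: limit of ratio of densities}, so the two approaches are of comparable total difficulty; yours is arguably the cleaner bookkeeping, though as written it leaves the derivative local CLT as a stated ingredient rather than a proved one.
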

\begin{proof}
The proof is split into three steps that are Proposition \ref{prop: difference of energy}, Proposition \ref{prop: limits of difference of G} and Proposition \ref{prop: limit of ratio of densities} below.
\end{proof}
We start with the following auxiliary lemma that links the free energy to the density of an average of independent random variables. This lemma will be applied in Proposition \ref{prop: difference of energy} and Theorem \ref{theo: thermodynamic limit with forces} later on.
\begin{lemma}
\label{lem: aulem 1}
 Suppose that $\tilde{\psi}_i \in C^2(\R)$ and $0 < \kappa_1 \leq \tilde{\psi}_i'' \leq \kappa_2$ for $i=1,\ldots, N$. We define
\begin{align}
&\tilde{W}_N(A)=\sup_{\sigma\in \R}\Big\{\sigma A -\frac{1}{N} \sum_{i=1}^N \log \int_{\R}\exp(-\tilde{\psi}_i(y)+\sigma y)\,dy\Big\},\label{W_N(A) tilde}
\\&\tilde{F}_N(A)=-\log\int_{\R^{N-1}} \exp\Big[-\sum_{i=1}^N\tilde{\psi}_i(u_i-u_{i-1})\Big]\,du_1\ldots du_{N-1},\label{eq: F_N(A) tilde}
\end{align}
with $u_0=0, u_N=NA$.

Let $\sigma^*$ be the maximizer in \eqref{W_N(A) tilde}. We define the one dimensional probability measures 
\begin{equation}
\label{mu_i}
\tilde{\mu}_i^{\sigma^*}(dy)=Z_i^{-1}\exp(\sigma^* y-\tilde{\psi}_i(y))\,dy,
\end{equation}
where $Z_i$ is the normalising constant. Let $\tilde{X}_i$ be independent random variables distributed according to $\tilde{\mu}_i^{\sigma*}$ and let $\tilde{m}_i$ be the mean of $\tilde{X}_i$. Let $\tilde{g}_{N,A}$ be the density of $\frac{1}{\sqrt{N}}\sum\limits_{i=1}^N (\tilde{X}_i-\tilde{m}_i)$. Then it holds that
\begin{equation}
\label{eq: auxilary eqn}
\tilde{F}_N(A)=\frac{1}{2}\log N+N\tilde{W}_N(A)-\log \tilde{g}_{N,A}(0).
\end{equation}
\end{lemma}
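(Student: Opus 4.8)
The plan is to pass from the integral over displacements to an integral over bond lengths, and then to recognise the resulting quantity as the density of an exponentially tilted sum of independent variables, evaluated at a point pinned down by the optimality condition for $\sigma^*$. First I would change variables from $(u_1,\dots,u_{N-1})$ to the bond lengths $y_i := u_i - u_{i-1}$, $i = 1,\dots,N$. This map is lower-triangular with unit diagonal, so its Jacobian is $1$, and the boundary constraints $u_0 = 0$, $u_N = NA$ collapse to the single linear constraint $\sum_{i=1}^N y_i = NA$ (eliminating $y_N = NA - \sum_{i=1}^{N-1} y_i$). Writing $Z_N := e^{-\tilde F_N(A)}$, this identifies $Z_N = \int_{\R^{N-1}} \prod_{i=1}^N e^{-\tilde\psi_i(y_i)}\, dy_1\cdots dy_{N-1}$ with the convolution $(\rho_1 * \cdots * \rho_N)(NA)$ of the unnormalised weights $\rho_i(y) = e^{-\tilde\psi_i(y)}$; the lower bound $\kappa_1 \le \tilde\psi_i''$ forces at least quadratic growth of $\tilde\psi_i$, which guarantees that all integrals converge and that each density is well defined.

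Next I would perform the exponential tilting. Using the maximiser $\sigma^*$ of \eqref{W_N(A) tilde}, I write $e^{-\tilde\psi_i(y)} = Z_i\, e^{-\sigma^* y}\, p_i^{\sigma^*}(y)$, where $p_i^{\sigma^*}$ is the density of $\tilde\mu_i^{\sigma^*}$ from \eqref{mu_i}. Substituting this factorisation into the product and using $\sum_{i=1}^N y_i = NA$ on the constraint set pulls all the $\sigma^*$-dependent constants outside the integral, yielding
\begin{equation*}
  Z_N = \Big(\prod_{i=1}^N Z_i\Big)\, e^{-\sigma^* N A}\, f_{S_N}(NA),
\end{equation*}
where $f_{S_N}$ denotes the density of $S_N := \sum_{i=1}^N \tilde X_i$ with $\tilde X_i \sim \tilde\mu_i^{\sigma^*}$ independent. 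The existence and uniqueness of $\sigma^*$ are immediate, since $\sigma \mapsto \log\int_\R e^{-\tilde\psi_i(y)+\sigma y}\,dy$ is convex, so the objective in \eqref{W_N(A) tilde} is strictly concave.

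The crucial step is to relate $f_{S_N}(NA)$ to $\tilde g_{N,A}(0)$. Differentiating the objective in \eqref{W_N(A) tilde} and setting the derivative to zero at $\sigma^*$ gives the first-order condition $A = \frac1N \sum_{i=1}^N \tilde m_i$, i.e. $\sum_{i=1}^N \tilde m_i = NA$; thus $NA$ is exactly the mean of $S_N$. Consequently, the affine relation between $S_N$ and its centred, rescaled version $\frac{1}{\sqrt N}\sum_{i=1}^N(\tilde X_i - \tilde m_i)$ gives $f_{S_N}(NA) = N^{-1/2}\,\tilde g_{N,A}(0)$. Taking $-\log$ of the displayed identity for $Z_N$, and using that the value of \eqref{W_N(A) tilde} at its maximiser is $N\tilde W_N(A) = \sigma^* N A - \sum_{i=1}^N \log Z_i$, all the $\sigma^*$ and $\log Z_i$ contributions cancel and leave precisely \eqref{eq: auxilary eqn}.

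The only genuine obstacle is bookkeeping rather than hard analysis: one must check that the point $NA$ at which the convolution is evaluated coincides with the mean $\sum_i \tilde m_i$ of the tilted sum, which is exactly what the optimality condition for $\sigma^*$ supplies and is what makes the centred density appear at the origin. Existence and smoothness of every density, together with existence and uniqueness of $\sigma^*$, all follow directly from $\tilde\psi_i \in C^2$ and the two-sided bound $\kappa_1 \le \tilde\psi_i'' \le \kappa_2$.
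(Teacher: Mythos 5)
Your proposal is correct and follows essentially the same route as the paper's proof: change of variables to bond lengths, recognition of the partition function as a convolution, exponential tilting by the maximiser $\sigma^*$, and the first-order optimality condition $\sum_i \tilde m_i = NA$ to place the evaluation point at the mean so that the centred, rescaled density appears at the origin. The only cosmetic difference is direction — you tilt the convolution of the raw weights $e^{-\tilde\psi_i}$, whereas the paper starts from the tilted measures $\tilde\mu_i^{\sigma^*}$ and unwinds back to $\tilde F_N(A)$ — and the computation is otherwise identical.
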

\begin{proof}
This proof is adapted from \cite[Lemma 8]{Menz11} (see also \cite[Eq. (125)]{GOVW09}).
By change of variables $y_i=u_i-u_{i-1}$, for $i=1,\ldots,N-1$, we can re-write $\tilde{F}_N(A)$ as
\begin{equation}
\label{eq: energy 2}
\tilde{F}_N(A)=-\log \int_{\R^{N-1}}\exp\Bigg[-\sum_{i=1}^{N-1} \tilde{\psi}_i(y_i)-\tilde{\psi}_N\bigg(NA-\sum_{i=1}^{N-1}y_i\bigg)\Bigg]\,dy_1\ldots dy_{N-1}.
\end{equation}
We define
\begin{align*}
&\tilde{\varphi}_{N,i}(\sigma)=\log\int_{\R}\exp[-\tilde{\psi}_i(y)+\sigma y]\,dy,
\\&\tilde{\varphi}_N(\sigma):=\frac{1}{N}\log \int_{\R^N}\exp\bigg[-\sum_{i=1}^N\tilde{\psi}_i(y_i)+\sigma\sum_{i=1}^N y_i\bigg]\,dy_1\ldots dy_N
\end{align*} 
then
\begin{equation}
\label{eq: A}
\begin{cases}
\tilde{W}_N(A)=\sigma^* A-\tilde{\varphi}_N(\sigma^*),\\
A=\frac{d}{d\sigma}\tilde{\varphi}_N(\sigma)\Big\vert_{\sigma=\sigma^*}.
\end{cases}
\end{equation}
We have
\begin{align}
\label{eq: varphi_N}
\tilde{\varphi}_N(\sigma)&=\frac{1}{N}\log \int_{\R^N}\exp\bigg[-\sum_{i=1}^N\tilde{\psi}_i(y_i)+\sigma\sum_{i=1}^N y_i\bigg]\,dy_1\ldots dy_N\nonumber
\\&=\frac{1}{N}\log \int_{\R^N}\prod_{i=1}^N\exp\big[-\tilde{\psi}_i(y_i)+\sigma y_i\big]\,dy_1\ldots dy_N\nonumber
\\&=\frac{1}{N}\log \prod_{i=1}^N\int_{\R}\exp\big[-\tilde{\psi}_i(y_i)+\sigma y_i\big]\,dy_i\nonumber
\\&=\frac{1}{N}\sum_{i=1}^N\log\int_{\R}\exp\big[-\tilde{\psi}_i(y_i)+\sigma y_i\big]\,dy_i\nonumber
\\&=\frac{1}{N}\sum_{i=1}^N\tilde{\varphi}_{N,i}(\sigma).
\end{align}
A straightforward calculation gives
\begin{equation}
\label{eq: m_i}
\tilde{m}_i=\int_{\R}y_i\tilde{\mu}_i^{\sigma^*}(dy_i)=\frac{d}{d\sigma}\tilde{\varphi}_{N,i}(\sigma)\Big\vert_{\sigma=\sigma^*}.
\end{equation}
Substituting \eqref{eq: varphi_N} and \eqref{eq: m_i} into \eqref{eq: A}, we obtain
\begin{equation}
\label{eq: A 2}
A=\frac{d}{d\sigma}\tilde{\varphi}_N(\sigma)\Big\vert_{\sigma=\sigma^*}=\frac{1}{N}\sum_{i=1}^N\frac{d}{d\sigma}\tilde{\varphi}_{N,i}(\sigma)\Big\vert_{\sigma=\sigma^*}=\frac{1}{N}\sum_{i=1}^N \tilde{m}_i.
\end{equation}
Since $\tilde{X}_i$ are independent, the density of the sum $\sum_{i=1}^N \tilde{X}_i$ is given by the convolution
\begin{equation*}
\tilde{f}_{\sum_{i=1}^N X_i}(\xi)=(\tilde{\mu}_1^{\sigma^*}\ast\ldots\ast\tilde{\mu}_N^{\sigma^*})(\xi).
\end{equation*}
Using the definition of convolution, we can compute the above density explicitly as follows
\begin{equation*}
\tilde{f}_{\sum_{i=1}^N \tilde{X}_i}(\xi)=\int_{\R^{N-1}}\exp\bigg[-\sum_{i=1}^{N}\tilde{\varphi}_{N,i}(\sigma^*)+\sigma^* \xi -\tilde{\psi}_N(\xi-\sum_{i=1}^{N-1}y_i)-\sum_{i=1}^{N-1}\tilde{\psi}_{i}(y_i)\bigg]dy_1\ldots dy_{N-1}.
\end{equation*}
We recall that if $Y$ has density $f(y)dy$ then, for $\alpha>0, \beta\in\R$, $\alpha Y+\beta$ has density $\frac{1}{\alpha}f(\frac{y-\beta}{\alpha})$. Hence, we obtain
\begin{align*}
&\tilde{g}_{N,A}(\xi)=f_{\frac{1}{\sqrt{N}}\sum_{i=1}^N(\tilde{X}_i-m_i)}(\xi)
\\&=\sqrt{N}\int_{\R^{N-1}}\exp\bigg[-\sum_{i=1}^{N}\tilde{\varphi}_{N,i}(\sigma^*)+\sigma^*\Big(\xi\sqrt{N}+\sum_{i=1}^N\tilde{m}_i\Big) 
\\&\hspace*{4cm}-\tilde{\psi}_N\Big(\sqrt{N}\xi-\sum_{i=1}^{N-1}y_i+\sum_{i=1}^{N}\tilde{m}_i\Big)-\sum_{i=1}^{N-1}\tilde{\psi}_{i}(y_i)\bigg]dy_1\ldots dy_{N-1}.
\end{align*}
In particular, using \eqref{eq: varphi_N}, \eqref{eq: energy 2} and \eqref{eq: A 2}, we get
\begin{align*}
\tilde{g}_{N,A}(0)&=\sqrt{N}\int_{\R^{N-1}}\exp\bigg[-\sum_{i=1}^{N}\tilde{\varphi}_{N,i}(\sigma^*)+\sigma^*\sum_{i=1}^N\tilde{m}_i -\tilde{\psi}_N\Big(-\sum_{i=1}^{N-1}y_i+\sum_{i=1}^{N}\tilde{m}_i\Big)
\\&\hspace*{7cm}-\sum_{i=1}^{N-1}\tilde{\psi}_{i}(y_i)\bigg]dy_1\ldots dy_{N-1}
\\&=\sqrt{N}\int_{\R^{N-1}}\exp\bigg[-N\tilde{\varphi}_{N}(\sigma^*)+\sigma^*N A -\tilde{\psi}_N\Big(NA-\sum_{i=1}^{N-1}y_i\Big)-\sum_{i=1}^{N-1}\tilde{\psi}_{i}(y_i)\bigg]dy_1\ldots dy_{N-1}
\\&=\sqrt{N}\exp[N(\sigma^* A-\tilde{\varphi}_N(\sigma^*))]\int_{\R^{N-1}}\exp\bigg[-\tilde{\psi}_N\Big(NA-\sum_{i=1}^{N-1}y_i\Big)-\sum_{i=1}^{N-1}\tilde{\psi}_{i}(y_i)\bigg]dy_1\ldots dy_{N-1}
\\&=\sqrt{N}\exp[N(\sigma^* A-\tilde{\varphi}_N(\sigma^*))]\exp[-\tilde{F}_N(A)].
\end{align*}
It follows from \eqref{eq: A} and the above equality that
\begin{equation*}
\log \tilde{g}_{N,A}(0)=\frac{1}{2}\log N +N \tilde{W}_N(A)-\tilde{F}_N(A),
\end{equation*}
which is equivalent to \eqref{eq: auxilary eqn} as claimed.
\end{proof}
The following proposition provides an analytical expression of the defect-formation free energy in terms of densities of averages of independent random variables.
\begin{proposition}
\label{prop: difference of energy}
Recall that the Cauchy-Born energy is given by
\begin{equation}
\label{eq: Cauchy-Born}
W(A) = \sup_{\sigma\in \R}\Big\{\sigma A -\log \int_{\R}\exp(-\psi(y) + \sigma y)\,dy\Big\}.
\end{equation}
We define an analogous function that is associated to the defect material
\begin{equation}
\label{W_N^P(A)}
W_N^P(A)=\sup_{\sigma\in \R}\left\{\sigma x - \frac{1}{N} 
\Big( \log \int_{\R}\exp[-(\psi+P)(y)+\sigma y]\,dy 
+ (N-1)\log \int_{\R}\exp(-\psi(y)+\sigma y)\,dy \Big) \right\}.
\end{equation}

Let $\sigma_0$ and $\sigma_P^N$ be the maximisers in definitions of \eqref{eq: Cauchy-Born} and \eqref{W_N^P(A)} respectively. We define the one-dimensional probability measures 
\begin{align}
&\mu^{\sigma_0}(dy)=Z_\mu^{-1}\exp(\sigma_0 y-\psi(y))\,dy,\quad\text{and}\label{mu}
\\&\nu^{\sigma_P^N}(dy)=Z_\nu^{-1}\exp(\sigma_P^N y-(\psi+P)(y))\,dy,\quad  \mu^{\sigma_P^N}(dy)=Z_{\mu_P}^{-1}\exp(\sigma_P^N y-\psi(y))\,dy,\label{nu}
\end{align}
where $Z_\mu, Z_\nu$ and $Z_{\mu_P}$ are normalising constants. Let $m, m_{P,1}$ and $m_{P,2}$ be  respectively the means of $\mu^{\sigma_0}, \nu^{\sigma_P^N}(dy)$ and $\mu^{\sigma_P^N}(dy)$. 

Let $\{X_i\}_{i=1,\ldots, N}$ and $\{Y_i\}_{i=1,\ldots,N}$ be independent random variables, where $\{X_i\}_{i=1,\ldots, N}$ distributed according to $\mu^{\sigma_0}(dy),$ $\{Y_1\}$ distributed according to $\nu^{\sigma_P^N}(dy),$ and $\{Y_i\}_{i=2,\ldots,N}$ distributed according to 
$\mu^{\sigma_P^N}(dy).$  Let $g_{N,A}$ and $g^P_{N,A}$ be respectively the
density of $\frac{1}{\sqrt{N}}\sum_{i=1}^N (X_i-m)$ and
$\frac{1}{\sqrt{N}}\sum_{i=1}^N (Y_i-m_{P,i})$ (with $m_{P,2}=\ldots=m_{P,N}$).

Then it holds that
\begin{equation}
\label{eq: difference of energy}
F_N^P(A)-F_N(A)=N[W_N^P(A)-W(A)]+\log\frac{g_{N,A}(0)}{g^P_{N,A}(0)}.
\end{equation}
\end{proposition}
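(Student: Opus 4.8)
The plan is to apply Lemma~\ref{lem: aulem 1} twice, once to $F_N(A)$ and once to $F_N^P(A)$, and then subtract. The point is that both free energies are instances of the general $\tilde F_N(A)$ appearing in the lemma, for suitable choices of the bond potentials $\tilde\psi_i$. For $F_N(A)$ we take every bond potential equal to $\psi$, i.e. $\tilde\psi_i = \psi$ for all $i$; then the tilted single-site function $\tilde\varphi_{N,i}$ is independent of $i$, the averaged pressure $\tilde W_N(A)$ collapses to the Cauchy--Born energy $W(A)$ of \eqref{eq: Cauchy-Born}, its maximiser is $\sigma_0$, and the measures $\tilde\mu_i^{\sigma^*}$ all coincide with $\mu^{\sigma_0}$ in \eqref{mu}. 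Thus the lemma gives
\begin{equation*}
  F_N(A) = \tfrac{1}{2}\log N + N\,W(A) - \log g_{N,A}(0),
\end{equation*}
where $g_{N,A}$ is exactly the density of $\tfrac{1}{\sqrt N}\sum_{i=1}^N(X_i-m)$ described in the statement.

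For the defect free energy $F_N^P(A)$ I would choose $\tilde\psi_1 = \psi + P$ and $\tilde\psi_i = \psi$ for $i = 2,\dots,N$. Assumption~\ref{ass: assumption} guarantees the required uniform convexity bounds $0 < \kappa_1 \le \tilde\psi_i'' \le \kappa_2$ hold for the $\psi$-bonds and $\varsigma_1 \le (\psi+P)'' \le \varsigma_2$ for the first bond, so the hypotheses of the lemma are met (one may take the common lower and upper bounds to be $\min(\kappa_1,\varsigma_1)$ and $\max(\kappa_2,\varsigma_2)$). With this choice the averaged function $\tilde W_N(A)$ of \eqref{W_N(A) tilde} becomes precisely $W_N^P(A)$ of \eqref{W_N^P(A)}, since the sum $\tfrac1N\sum_i \log\int\exp(-\tilde\psi_i + \sigma y)\,dy$ separates into one $(\psi+P)$-term and $(N-1)$ identical $\psi$-terms. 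Its maximiser is $\sigma_P^N$, and the associated single-site measures are $\tilde\mu_1^{\sigma_P^N} = \nu^{\sigma_P^N}$ and $\tilde\mu_i^{\sigma_P^N} = \mu^{\sigma_P^N}$ for $i \ge 2$, matching \eqref{nu}. Hence the lemma yields
\begin{equation*}
  F_N^P(A) = \tfrac{1}{2}\log N + N\,W_N^P(A) - \log g^P_{N,A}(0),
\end{equation*}
with $g^P_{N,A}$ the density of $\tfrac{1}{\sqrt N}\sum_{i=1}^N(Y_i - m_{P,i})$.

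Subtracting the two displays, the $\tfrac12\log N$ terms cancel exactly, the bulk terms combine into $N[W_N^P(A) - W(A)]$, and the density terms combine into $\log g_{N,A}(0) - \log g^P_{N,A}(0) = \log\frac{g_{N,A}(0)}{g^P_{N,A}(0)}$, which is precisely \eqref{eq: difference of energy}. The argument is essentially a matter of pattern-matching the general objects of Lemma~\ref{lem: aulem 1} to the concrete data of the perfect and defect systems.

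I do not expect a genuine obstacle here, since the heavy lifting---the exact identity relating a free energy to the value at $0$ of the density of a normalised sum---has already been done in Lemma~\ref{lem: aulem 1}. The only points requiring care are bookkeeping ones: verifying that the maximisation problems defining $W(A)$ and $W_N^P(A)$ coincide with the specialisations of \eqref{W_N(A) tilde}, and checking that the probability measures and their means $m$, $m_{P,1}$, $m_{P,2}$ are correctly identified with the $\tilde\mu_i^{\sigma^*}$ and $\tilde m_i$ of the lemma (in particular that $m_{P,2} = \dots = m_{P,N}$, which follows because all bonds $i \ge 2$ use the same potential $\psi$ and hence the same tilted measure). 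These are routine once the two specialisations are set up correctly.
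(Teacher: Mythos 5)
Your proposal is correct and follows exactly the paper's proof: apply Lemma~\ref{lem: aulem 1} once with $\tilde\psi_i=\psi$ for all $i$ and once with $\tilde\psi_1=\psi+P$, $\tilde\psi_i=\psi$ for $i\ge 2$, then subtract the two resulting identities so the $\tfrac12\log N$ terms cancel. Your extra care in checking the convexity hypotheses (taking $\min(\kappa_1,\varsigma_1)$ and $\max(\kappa_2,\varsigma_2)$ as common bounds) and in matching the tilted measures to \eqref{mu}--\eqref{nu} is sound bookkeeping that the paper leaves implicit.
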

\begin{proof}Applying Lemma \ref{lem: aulem 1} for the cases $\tilde{\psi}_i=\psi~~ (i=1,\ldots, N)$ and $\tilde{\psi}_1=\psi+P,~~\tilde{\psi}_i=\psi~~ (i=2,\ldots, N)$, we obtain the following relations respectively
\begin{align*}
F_N{A}&=\frac{1}{2}\log N+N W_N(A)-\log g_{N,A}(0),
\\F^P_N(A)&=\frac{1}{2}\log N +N W^P_N(A)-\log g^P_{N,A}(0).
\end{align*}
The assertion \eqref{eq: difference of energy} immediately follows from these two relations.
\end{proof}
The next step is to passing to the limit $N\to\infty$ for each term in the relation \eqref{eq: difference of energy}. We will need some auxiliary lemmas. We define
\begin{align}
&\Psi(\sigma):=\frac{\int_{\R} y\exp(-\psi(y)+\sigma y)\,dy}{\int_{\R} \exp(-\psi(y)+\sigma y)\,dy},\label{eq: Psi}
\\&\Phi(\sigma):=\frac{\int_{\R} y\exp[-(\psi+P)(y)+\sigma y]\,dy}{\int_{\R} \exp[-(\psi+P)(y)+\sigma y]\,dy}-\frac{\int_{\R} y\exp(-\psi(y)+\sigma y)\,dy}{\int_{\R} \exp(-\psi(y)+\sigma y)\,dy}.\nonumber
\end{align}
The following lemma on boundedness of derivatives of $\Psi$ and $\Phi$ will be used several times in the sequel.
\begin{lemma} 
\label{lem: bound of derivatives} 
It holds that
\begin{equation}
\frac{1}{\kappa_2}\leq \frac{d}{d\sigma}\Psi(\sigma)\leq \frac{1}{\kappa_1}\quad \text{and}\quad \Big|\frac{d}{d\sigma}\Phi(\sigma)\Big|\leq C, \label{eq: derivative of Phi}
\end{equation}
for some positive constant $C$.
\end{lemma}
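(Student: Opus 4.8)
The plan is to recognise $\Psi$ and the two pieces of $\Phi$ as first derivatives of cumulant generating functions, so that their $\sigma$-derivatives become \emph{variances} of tilted Gibbs measures, which can then be bounded two-sidedly using the uniform convexity in Assumption~\ref{ass: assumption}. Set $\varphi(\sigma) := \log\int_\R \exp(-\psi(y)+\sigma y)\,dy$. Differentiating under the integral sign gives $\varphi'(\sigma) = \Psi(\sigma)$ and $\varphi''(\sigma) = \frac{d}{d\sigma}\Psi(\sigma) = \mathrm{Var}_{\mu^\sigma}(Y)$, where $\mu^\sigma(dy)\propto\exp(\sigma y-\psi(y))\,dy$ and $Y\sim\mu^\sigma$ has mean $m$. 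Writing the log-density as $-V$ with $V(y)=\psi(y)-\sigma y$, we have $V''=\psi''$, so Assumption~\ref{ass: assumption} gives $\kappa_1\le V''\le\kappa_2$.

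The heart of the argument is a single two-sided variance estimate. For a measure $\propto e^{-V}$, integration by parts yields the Stein identity $\mathbb{E}[g'(Y)]=\mathbb{E}[g(Y)V'(Y)]$ for smooth $g$ of at most polynomial growth; taking $g(y)=y-m$ gives $1=\mathbb{E}[(Y-m)V'(Y)]$. Since $\mathbb{E}[Y-m]=0$ I may subtract $V'(m)$ and use the fundamental theorem of calculus, $V'(Y)-V'(m)=\int_m^Y V''(t)\,dt$, to obtain $1=\mathbb{E}\big[(Y-m)\int_m^Y V''(t)\,dt\big]$. Checking the signs separately for $Y>m$ and $Y<m$, the integrand lies pointwise between $\kappa_1(Y-m)^2$ and $\kappa_2(Y-m)^2$; taking expectations gives $\kappa_1\,\mathrm{Var}(Y)\le 1\le\kappa_2\,\mathrm{Var}(Y)$, i.e.\ $\tfrac{1}{\kappa_2}\le\frac{d}{d\sigma}\Psi(\sigma)\le\tfrac{1}{\kappa_1}$, which is the first claim. (The upper bound alone is the classical Brascamp--Lieb inequality, but this identity delivers both bounds simultaneously.)

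For $\Phi$, write $\Phi(\sigma)=\Psi_P(\sigma)-\Psi(\sigma)$, where $\Psi_P$ is the mean of the defect-tilted measure $\nu^\sigma(dy)\propto\exp(\sigma y-(\psi+P)(y))\,dy$. Repeating the computation verbatim with $V=(\psi+P)-\sigma y$, so that $\varsigma_1\le V''\le\varsigma_2$, gives $\tfrac{1}{\varsigma_2}\le\frac{d}{d\sigma}\Psi_P(\sigma)\le\tfrac{1}{\varsigma_1}$. Both $\frac{d}{d\sigma}\Psi_P$ and $\frac{d}{d\sigma}\Psi$ are therefore positive and uniformly bounded, so their difference satisfies $\big|\frac{d}{d\sigma}\Phi(\sigma)\big|\le\max\{\kappa_1^{-1},\varsigma_1^{-1}\}=:C$, which is the second claim.

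I expect the only delicate points to be bookkeeping: justifying differentiation under the integral sign and the vanishing of the boundary terms in the Stein identity. Both follow from the uniform convexity $\psi''\ge\kappa_1>0$ (resp.\ $(\psi+P)''\ge\varsigma_1>0$), which forces Gaussian-type tails on the tilted measures and hence integrability of the moments and logarithmic derivatives involved. The substantive content is the lower bound $\mathrm{Var}(Y)\ge\kappa_2^{-1}$ (equivalently the positivity of $\frac{d}{d\sigma}\Psi$ away from zero), and this is supplied directly by the integration-by-parts-plus-FTC identity above, so I anticipate no serious obstacle beyond these routine justifications.
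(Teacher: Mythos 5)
Your proof is correct, and the overall strategy is the same as the paper's: identify $\tfrac{d}{d\sigma}\Psi(\sigma)$ (and likewise $\tfrac{d}{d\sigma}\Psi_P$) as the variance of the tilted measure $\mu_\sigma\propto e^{-\psi+\sigma y}$ and then bound that variance two-sidedly using $\kappa_1\le\psi''\le\kappa_2$, with the bound on $\Phi'=\Psi_P'-\Psi'$ obtained by differencing. Where you diverge is in how the two-sided variance bound is produced. The paper treats the two directions separately: the upper bound $\mathrm{Var}\le\kappa_1^{-1}$ comes from the Poincar\'e inequality for $\mu_\sigma$ (uniform in $\sigma$ by convexity), and the lower bound comes from the pointwise inequality $g^2\ge 2fg-f^2$ with $g=y-m_\sigma$, $f=\beta(\psi'-\sigma)$, integration by parts, and optimisation over $\beta$, yielding $\mathrm{Var}\ge\bigl(\int\psi''\,d\mu_\sigma\bigr)^{-1}\ge\kappa_2^{-1}$ (a slightly sharper, Cram\'er--Rao-type statement, following Caputo). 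You instead derive both bounds at once from the single identity $1=\mathbb{E}\bigl[(Y-m)\int_m^Y V''(t)\,dt\bigr]$ obtained by combining the Stein identity with the fundamental theorem of calculus, and then sandwiching the integrand pointwise between $\kappa_1(Y-m)^2$ and $\kappa_2(Y-m)^2$. Your route is more elementary and unified (no appeal to a Poincar\'e inequality, and the sign check for $Y\gtrless m$ is routine), at the cost of not producing the intermediate bound in terms of $\int\psi''\,d\mu_\sigma$; both routes rest on the same integration by parts and the same integrability justifications, which you correctly flag as following from the Gaussian tails forced by $\psi''\ge\kappa_1>0$. Your final bound $\bigl|\Phi'\bigr|\le\max\{\kappa_1^{-1},\varsigma_1^{-1}\}$ is consistent with the paper's two-sided estimate $\varsigma_2^{-1}-\kappa_1^{-1}\le\Phi'\le\varsigma_1^{-1}-\kappa_2^{-1}$.
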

\begin{proof}
We first prove the first part of \eqref{eq: derivative of Phi}. The following proof is simplified from \cite[Lemma 2.4]{Caputo03}. In \cite[Lemma 2.4]{Caputo03} the author has actually proved a stronger result than we need here.  
We have
\begin{align}
\label{eq: derivative of Psi}
\frac{d}{d\sigma}\Psi(\sigma)&=\frac{\left(\int_{\R}y^2\exp(-\psi(y)+\sigma y)\,dy\right)\left(\int_{\R}\exp(-\psi(y)+\sigma y)\,dy\right)-\left(\int_{\R}y\exp(-\psi(y)+\sigma y)\,dy\right)^2}{\left(\int_{\R}\exp(-\psi(y)+\sigma y)\,dy\right)^2}\nonumber
\\&=\int_{\R}\Big(y-m_\sigma\Big)^2\mu_\sigma(dy),
\end{align}
where
\begin{equation*}
\mu_\sigma(dy):=\frac{\exp(-\psi(y)+\sigma y)}{\int_{\R}\exp(-\psi(y)+\sigma y)\,dy}\,dy\in \mathcal{P}(\R),\quad\text{and}~~ m_\sigma=\int_{\R}y\mu_\sigma(dy).
\end{equation*}
Using this equality, we now estimate $\frac{d}{d\sigma}\Psi(\sigma)$ using assumptions on $\psi$. 
For the upper bound: since $\psi''\geq \kappa_1$, $\mu_\sigma$ satisfies the Poincare inequality with constant $\kappa_1$ uniformly in $\sigma$. Therefore,
\[
\frac{d}{d\sigma}\Psi(\sigma)\leq \frac{1}{\kappa_1}\int \Big|\frac{d}{dy} y\Big|^2\mu_\sigma(dy)=\frac{1}{\kappa_1}.
\]
For the lower bound: using the inequality $g^2\geq 2fg-f^2$ for all functions $f$ and $g$, with $g=y-m_\sigma$, we have
\[
\frac{d}{d\sigma}\Psi(\sigma)\geq \int [2f(y-m_\sigma)-f^2]\,\mu_\sigma(dy).
\]
By taking $f=\beta (\psi'-\sigma)$ for $\beta\in\R$, and applying integration by parts, we obtain
\[
\frac{d}{d\sigma}\Psi(\sigma)\geq 2\beta -\beta^2\int \psi''(y)\mu_\sigma(dy).
\]
Now maximizing over $\beta$, by choosing $\beta=\frac{1}{\int \psi''(y)\mu_\sigma(dy)}$, we get
\[
\frac{d}{d\sigma}\Psi(\sigma)\geq \frac{1}{\int \psi''(y)\mu_\sigma(dy)}\geq \frac{1}{\kappa_2},
\]
where we have used the assumption that $\psi''\leq \kappa_2$.

The second estimate in \eqref{eq: derivative of Phi} is proved similarly. We have
\begin{align*}
&\frac{d}{d\sigma}\Phi(\sigma)=\int_{\R}(y-m^P_\sigma)^2\,d\mu^P_\sigma(dx)-\int_{\R}(y-m_\sigma)^2\,d\mu_\sigma(dx),\quad\text{where}
\\&\mu^P_\sigma=\frac{\exp[-(\psi+P)(y)+\sigma y]}{\int_{\R} \exp[-(\psi+P)(y)+\sigma y)\,dy}\,dy,\quad \text{and}\quad m^P_\sigma=\int_{\R}y\mu^P_\sigma(dy).
\end{align*}
Since $\psi+P$ satisfies a similar assumption as $\psi$, we obtain
\[
\frac{1}{\varsigma_2}\leq\int_{\R}(y-m^P_\sigma)^2\,d\mu^P_\sigma(dx)\leq \frac{1}{\varsigma_1}.
\]
As a consequence, we get
\[
\frac{1}{\varsigma_2}-\frac{1}{\kappa_1}\leq\frac{d}{d\sigma}\Phi(\sigma)\leq \frac{1}{\varsigma_1}-\frac{1}{\kappa_2},
\]
which implies the second estimate in \eqref{eq: derivative of Phi}.
\end{proof}

Recalling that $\sigma_0$ and $\sigma_N^P$ are corresponding the maximisers in \eqref{eq: Cauchy-Born} and \eqref{W_N^P(A)}. The following lemma provides an estimate for $|\sigma_N^P-\sigma_0|$.
\begin{lemma}
\label{lem:sigma_P-sigma_0} 
There exists a positive constant $C$ such that, for $N$ sufficiently large,
\begin{equation}
|\sigma_P^N-\sigma_0|\leq \frac{C}{N}.
\end{equation}
\end{lemma}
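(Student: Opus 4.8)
The plan is to work entirely with the first-order optimality conditions that define the two maximisers and to compare them via the mean value theorem. Since the objective in \eqref{eq: Cauchy-Born} is concave in $\sigma$ (it is a linear term minus a log-partition function), $\sigma_0$ is characterised by the single equation $\Psi(\sigma_0) = A$, with $\Psi$ as in \eqref{eq: Psi}. Differentiating the objective in \eqref{W_N^P(A)} and using that the mean of $\nu^{\sigma}$ equals $\Psi(\sigma) + \Phi(\sigma)$ by the very definition of $\Phi$, the maximiser $\sigma_P^N$ is characterised by
\[
  A = \frac{1}{N}\big(\Psi(\sigma_P^N) + \Phi(\sigma_P^N)\big) + \frac{N-1}{N}\Psi(\sigma_P^N)
    = \Psi(\sigma_P^N) + \frac{1}{N}\Phi(\sigma_P^N).
\]
So both maximisers solve a scalar equation involving only $\Psi$ and $\Phi$, which are exactly the functions controlled by Lemma \ref{lem: bound of derivatives}.

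First I would introduce the auxiliary function $G_N(\sigma) := \Psi(\sigma) + \tfrac1N\Phi(\sigma)$, so that $\sigma_P^N$ solves $G_N(\sigma)=A$ while $\sigma_0$ solves $\Psi(\sigma)=A$. By Lemma \ref{lem: bound of derivatives}, $\Psi'(\sigma) \geq \kappa_2^{-1}$ and $|\Phi'(\sigma)| \leq C$, so
\[
  G_N'(\sigma) \geq \frac{1}{\kappa_2} - \frac{C}{N} \geq \frac{1}{2\kappa_2}
\]
once $N$ is large enough; in particular $G_N$ is then strictly increasing and $\sigma_P^N$ is its unique root. Next I would simply evaluate $G_N$ at $\sigma_0$: since $\Psi(\sigma_0)=A$ we get $G_N(\sigma_0) = A + \tfrac1N\Phi(\sigma_0)$, whereas $G_N(\sigma_P^N)=A$. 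Subtracting and applying the mean value theorem gives, for some $\eta$ between $\sigma_0$ and $\sigma_P^N$,
\[
  -\frac{1}{N}\Phi(\sigma_0) = G_N(\sigma_P^N) - G_N(\sigma_0) = G_N'(\eta)\,(\sigma_P^N - \sigma_0),
\]
and the lower bound on $G_N'$ yields $|\sigma_P^N - \sigma_0| \leq 2\kappa_2\,|\Phi(\sigma_0)|\,N^{-1}$.

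The crucial point, and the step I expect to be the main subtlety, is that $\Phi$ must be evaluated at $\sigma_0$ rather than at $\sigma_P^N$. Because $\sigma_0$ is determined by $A$ and $\psi$ alone and is therefore independent of $N$, the number $\Phi(\sigma_0)$ is a fixed finite constant, which is precisely what converts the prefactor $1/N$ into the claimed $C/N$ estimate. Had I compared the two equations at $\sigma_P^N$ instead, I would need an a priori bound on $\Phi(\sigma_P^N)$, and the derivative bound of Lemma \ref{lem: bound of derivatives} only controls the \emph{growth} of $\Phi$, not its size at an as-yet-uncontrolled point. The remaining verifications (concavity of the objectives, uniqueness of the maximisers, and the elementary mean value estimate) are routine.
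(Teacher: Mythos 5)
Your proposal is correct and follows essentially the same route as the paper: the paper also sets $F:=\Psi+\tfrac1N\Phi$ (your $G_N$), notes $A=\Psi(\sigma_0)=F(\sigma_P^N)$, applies the mean value theorem to get $F'(\theta)(\sigma_P^N-\sigma_0)=-\tfrac1N\Phi(\sigma_0)$, and concludes using the uniform lower bound on $F'$ from Lemma \ref{lem: bound of derivatives}. Your additional observation that $\Phi$ must be evaluated at the $N$-independent point $\sigma_0$ is exactly the (implicit) reason the paper's constant is uniform.
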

\begin{proof}
Set $F:=\Psi+\frac{1}{N}\Phi$. Then we have
\begin{equation*}
A=\Psi(\sigma_0)=F(\sigma_P^N),\quad\text{and}\quad F'(\sigma)=\Psi'(\sigma)+\frac{1}{N}\Phi'(\sigma).
\end{equation*}
This, together with Lemma \ref{lem: bound of derivatives},  imply that for sufficiently large $N$ and for all $\sigma\in\R$
\[
\frac{0.5}{\kappa_2}\leq |F'(\sigma)|\leq\frac{2}{\kappa_1}.
\]

By the mean value theorem, there exists $\theta\in \R$ such that
\begin{equation*}
F'(\theta)(\sigma_P^N-\sigma_0)=F(\sigma_P^N)-F_P(\sigma_0)=F_0(\sigma_0)-\left(F_0(\sigma_0)+\frac{1}{N}\Phi(\sigma_0)\right)=-\frac{1}{N}\Phi(\sigma_0).
\end{equation*}
Hence
\begin{equation*}
|\sigma_P^N-\sigma_0^N|=\frac{1}{N}\left|\frac{\Phi(\sigma_0)}{F'(\theta)}\right|\leq \frac{C}{N},
\end{equation*}
for some constant $C>0$ and for $N$ sufficiently large.
\end{proof}
The following estimate is elementary but will be used at various places later.
\begin{lemma}
\label{lem: exponential}
For any $z\in \mathbb{C}$, we have
\begin{equation}
|e^z-1|\leq |z|e^{|z|}.
\end{equation}
\end{lemma}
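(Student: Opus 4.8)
The plan is to exploit the power series representation of the exponential, since the right-hand side $|z|e^{|z|}$ strongly suggests a term-by-term comparison of the Taylor coefficients. First I would write
\begin{equation*}
  e^z - 1 = \sum_{n=1}^\infty \frac{z^n}{n!},
\end{equation*}
which converges for every $z \in \mathbb{C}$, and then apply the triangle inequality to obtain
\begin{equation*}
  |e^z - 1| \leq \sum_{n=1}^\infty \frac{|z|^n}{n!}.
\end{equation*}
This reduces the problem to a clean inequality between two real power series in the single nonnegative variable $r := |z|$.

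The key manipulation is to factor a single power of $|z|$ out of the series and shift the summation index. Writing $n = m+1$ gives
\begin{equation*}
  \sum_{n=1}^\infty \frac{|z|^n}{n!}
  = |z| \sum_{m=0}^\infty \frac{|z|^m}{(m+1)!}.
\end{equation*}
Since $(m+1)! \geq m!$ for every $m \geq 0$, each coefficient $\tfrac{1}{(m+1)!}$ is bounded above by $\tfrac{1}{m!}$, so the remaining series is dominated term by term by $\sum_{m=0}^\infty \tfrac{|z|^m}{m!} = e^{|z|}$. Combining the three displays yields $|e^z - 1| \leq |z| e^{|z|}$, as claimed.

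There is essentially no serious obstacle here: the result is elementary and the only point requiring the slightest care is justifying the interchange of the modulus with the infinite sum, which is immediate from absolute convergence of the exponential series on all of $\mathbb{C}$. If one preferred to avoid series entirely, an alternative would be the integral representation $e^z - 1 = \int_0^1 z\, e^{tz}\, dt$, from which $|e^z - 1| \leq |z| \int_0^1 |e^{tz}|\, dt \leq |z| \int_0^1 e^{t|z|}\, dt \leq |z| e^{|z|}$ follows by bounding $|e^{tz}| = e^{t\,\mathrm{Re}\, z} \leq e^{t|z|}$; but the power-series argument is the most transparent and is the one I would present.
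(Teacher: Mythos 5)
Your proposal is correct, but your primary argument takes a different route from the paper. The paper proves this lemma exactly via the integral representation that you relegate to a closing remark: it writes $e^z - 1 = \int_0^1 z e^{tz}\,dt$, pulls the modulus inside, and bounds $|e^{tz}| = e^{t\,\mathrm{Re}(z)} \leq e^{t|z|} \leq e^{|z|}$ on $[0,1]$. Your main argument instead expands $e^z - 1$ as a power series, factors out one power of $|z|$ after reindexing, and uses the termwise bound $\tfrac{1}{(m+1)!} \leq \tfrac{1}{m!}$. Both are complete and elementary; the series argument avoids any appeal to calculus of a complex parameter and, as a small bonus, the intermediate quantity $|z|\sum_{m\geq 0} |z|^m/(m+1)! = e^{|z|} - 1$ is actually a sharper bound than the stated $|z|e^{|z|}$, though the lemma does not need this. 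The integral version is marginally shorter and is the one the authors chose; either would serve the paper's purposes, since the lemma is only ever invoked as the crude estimate $|e^t - 1| \leq |t|e^{|t|}$.
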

\begin{proof}
We have
\begin{equation*}
|e^z-1|=\left|e^{tz}\Big\vert_0^1\right|=\left|\int_0^1 ze^{tz}\,dt\right|\leq |z|\int_0^1|e^{tz}|\,dt=|z|\int_0^1 e^{t\mathrm{Rel}(z)}\,dt\leq |z|\int_0^1 e^{|z|}\,dt =|z|e^{|z|}.
\end{equation*}
\end{proof}

The second ingredient of the proof of Theorem \ref{theo: main theorem constrained case} is the following proposition.
\begin{proposition} It holds that
\label{prop: limits of difference of G}
\begin{equation}
\label{limits of difference of G}
\lim_{N\rightarrow \infty} N[W_N^P(A)-W(A)]=-\log\frac{\int \exp[-(\psi+P)(y)-W'(A) y]\,dy}{\int \exp[-\psi(y)-W'(A) y]\,dy}.
\end{equation}
Moreover, it hods that
\begin{equation*}
\Bigg| N[W_N^P(A)-W(A)]+\log\frac{\int \exp[-(\psi+P)(y)-W'(A) y]\,dy}{\int \exp[-\psi(y)-W'(A) y]\,dy}\Bigg|\leq\frac{C}{N}.
\end{equation*}
\end{proposition}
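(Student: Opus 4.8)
The plan is to compute $N[W_N^P(A)-W(A)]$ by inserting the two maximisers and to extract a cancellation forced by the first-order optimality conditions; the rate then follows from the bound $|\sigma_P^N-\sigma_0|\le C/N$ of Lemma~\ref{lem:sigma_P-sigma_0}. Write $\varphi(\sigma):=\log\int_\R\exp(-\psi(y)+\sigma y)\,dy$ and $\varphi^P(\sigma):=\log\int_\R\exp(-(\psi+P)(y)+\sigma y)\,dy$, so that $\Psi=\varphi'$ and, at the maximisers, $W(A)=\sigma_0A-\varphi(\sigma_0)$ and $W_N^P(A)=\sigma_P^NA-\tfrac1N\varphi^P(\sigma_P^N)-\tfrac{N-1}{N}\varphi(\sigma_P^N)$. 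Multiplying the difference by $N$ and regrouping the $\varphi$-terms I obtain
\[
  N[W_N^P(A)-W(A)]=NA(\sigma_P^N-\sigma_0)+N\big[\varphi(\sigma_0)-\varphi(\sigma_P^N)\big]+\big[\varphi(\sigma_P^N)-\varphi^P(\sigma_P^N)\big].
\]

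First I would treat the first two terms together. Setting $\delta_N:=\sigma_P^N-\sigma_0$ and using the optimality condition $\varphi'(\sigma_0)=\Psi(\sigma_0)=A$ together with a second-order Taylor expansion of $\varphi$ about $\sigma_0$ (legitimate since $\varphi\in C^2$ with $\varphi''=\Psi'$ bounded by Lemma~\ref{lem: bound of derivatives}), I find $\varphi(\sigma_0)-\varphi(\sigma_P^N)=-A\delta_N-\tfrac12\varphi''(\xi)\delta_N^2$ for some $\xi$ between $\sigma_0$ and $\sigma_P^N$. The two leading contributions $NA\delta_N$ then cancel exactly, leaving $-\tfrac{N}{2}\varphi''(\xi)\delta_N^2$. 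By Lemma~\ref{lem:sigma_P-sigma_0} we have $|\delta_N|\le C/N$, so this term is $O(N\delta_N^2)=O(N^{-1})$ and in particular vanishes in the limit.

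It then remains to pass to the limit in the last term $\varphi(\sigma_P^N)-\varphi^P(\sigma_P^N)$. Since $\sigma_P^N\to\sigma_0$ and both $\varphi$ and $\varphi^P$ are $C^1$ with derivatives bounded on a fixed compact neighbourhood of $\sigma_0$ (their derivatives are the means $\Psi$ and $(\varphi^P)'=\Psi+\Phi$, controlled via Lemma~\ref{lem: bound of derivatives}), the mean value theorem gives $|\varphi(\sigma_P^N)-\varphi(\sigma_0)|+|\varphi^P(\sigma_P^N)-\varphi^P(\sigma_0)|\le C|\delta_N|\le C/N$. Combining the two estimates yields
\[
  N[W_N^P(A)-W(A)]=\varphi(\sigma_0)-\varphi^P(\sigma_0)+O(N^{-1}),
\]
which both identifies the limit and delivers the claimed $O(N^{-1})$ rate. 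Substituting the Legendre-duality identity $\sigma_0=W'(A)$ (valid because $\varphi$ is strictly convex, $\varphi''\ge 1/\kappa_2>0$) rewrites $\varphi(\sigma_0)-\varphi^P(\sigma_0)$ as $-\log\big[\int\exp(-(\psi+P)(y)+W'(A)y)\,dy\big/\int\exp(-\psi(y)+W'(A)y)\,dy\big]$, in agreement with \eqref{eq: Ginf}.

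The step I expect to be the main obstacle, conceptually rather than computationally, is the exact cancellation of the two $O(1)$-sized quantities $NA\delta_N$: without it the expression would appear to diverge, and it is only the optimality condition $\varphi'(\sigma_0)=A$ that makes the leading order collapse to the finite quantity $\varphi(\sigma_0)-\varphi^P(\sigma_0)$. Upgrading mere convergence to the sharp $N^{-1}$ rate then rests on two quantitative inputs: the closeness $|\sigma_P^N-\sigma_0|\le C/N$ from Lemma~\ref{lem:sigma_P-sigma_0}, and the uniform boundedness of $\varphi''=\Psi'$ from Lemma~\ref{lem: bound of derivatives}, which together damp the quadratic Taylor remainder $N\delta_N^2$.
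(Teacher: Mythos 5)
Your proposal is correct, and it reaches the stated $O(N^{-1})$ bound by a genuinely different mechanism than the paper. The paper does not expand $N[W_N^P(A)-W(A)]$ directly; instead it exploits the variational structure twice to sandwich this quantity between $-\log\bigl[\int e^{-(\psi+P)(y)+\sigma_0 y}dy\big/\int e^{-\psi(y)+\sigma_0 y}dy\bigr]$ (from below, by inserting $\sigma_0$ into the sup defining $W_N^P$) and the same log-ratio evaluated at $\sigma_P^N$ (from above, by inserting $\sigma_P^N$ into the sup defining $W$); the divergent-looking terms $NA\delta_N$ and $N[\varphi(\sigma_0)-\varphi(\sigma_P^N)]$ never appear because the variational inequalities absorb them automatically. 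The paper then closes the gap between the two bounds by factoring the ratio at $\sigma_P^N$ as the ratio at $\sigma_0$ times tilting corrections $\langle e^{(\sigma_P^N-\sigma_0)y}\rangle_{\nu^{\sigma_0}}$ and $\langle e^{(\sigma_P^N-\sigma_0)y}\rangle_{\mu^{\sigma_0}}^{-1}$, controlled via $|e^z-1|\le|z|e^{|z|}$, Lemma~\ref{lem:sigma_P-sigma_0}, and the finiteness of the exponential moments $\langle|y|e^{C|y|}\rangle$ guaranteed by the quadratic growth of $\psi$ and $\psi+P$. Your route — exact decomposition, second-order Taylor cancellation through $\varphi'(\sigma_0)=A$, and the mean value theorem for $\varphi-\varphi^P$ — trades those exponential-moment estimates for the derivative bounds already contained in Lemma~\ref{lem: bound of derivatives} (plus the observation that $\Psi$ and $\Phi$, being globally Lipschitz, are bounded near $\sigma_0$), which is arguably leaner; what the paper's sandwich buys in exchange is that one never has to exhibit the $NA\delta_N$ cancellation explicitly. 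Both arguments rest on the same two quantitative inputs, Lemmas~\ref{lem: bound of derivatives} and~\ref{lem:sigma_P-sigma_0}. One remark: your limit carries $+W'(A)y$ in the exponents, consistent with \eqref{eq: Ginf} and with the intermediate displays of the paper's own proof; the minus signs in the statement of Proposition~\ref{prop: limits of difference of G} appear to be a typo in the paper, not a discrepancy in your argument.
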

\begin{proof}
We recall that $\sigma_0$ and $\sigma_P^N$ are respectively the maximisers in the definitions of $W(A)$ and $W^P_N(A)$, so that
\begin{align}
W(A)&=\sup_{\sigma\in \R}\Big\{\sigma A -\log \int_{\R}\exp(-\psi(y)+\sigma y)\,dy\Big\}\label{eq: W_N(A) 1}
\\&=\sigma_0 A-\log\int_{\R}\exp(-\psi(y)+\sigma_0 y)\,dy,\label{eq: W_N(A) 2}
\end{align}
where $\sigma_0$ satisfies
\begin{equation}
\label{sigma0}
A=\frac{\int_{\R} y\exp(-\psi(y)+\sigma_0 y)\,dy}{\int_{\R} \exp(-\psi(y)+\sigma_0 y)\,dy}=\Psi(\sigma_0).
\end{equation}
By properties of the Legendre transform, we also have $W'(A)=\sigma_0$, which is explicitly shown in \eqref{eq: Legendre transform}. Similarly
\begin{align}
W_N^P(A)&=\sup_{\sigma\in \R}\Bigg\{\sigma A - \frac{1}{N} 
\bigg( \log \int_{\R}\exp[-(\psi+P)(y)+\sigma y]\,dy 
+  (N-1) \log \int_{\R}\exp(-\psi(y)+\sigma y)\,dy \bigg) \Bigg\}\label{eq: W^P_N(A) 1}
\\&=\sigma_P^N A-\log\int_{\R}\exp[-\psi(y)+\sigma_P^N y]\,dy-\frac{1}{N}\log\frac{\int_{\R}\exp[-(\psi+P)(y)+\sigma_P^N y]\,dy}{\int_{\R}\exp(-\psi(y)+\sigma_P^N y)\,dy}\label{eq: W^P_N(A) 2},
\end{align}
where $\sigma_P^N$ solves
\begin{equation}
\label{sigmaP}
A=\frac{1}{N}\frac{\int_{\R}y\exp[-(\psi+P)(y)+\sigma y]\,dy}{\int_{\R}\exp[-(\psi+P)(y)+\sigma y]\,dy}+\frac{N-1}{N}\frac{\int_{\R} y\exp(-\psi(y)+\sigma y)\,dy}{\int_{\R} \exp(-\psi(y)+\sigma y)\,dy}.
\end{equation}
Using these supremum representations we will estimate lower and upper bounds for $N[W_N^P(A)-W(A)]$. 
For an upper bound: it follows from \eqref{eq: W_N(A) 1} that
\begin{equation*}
W_N(A)\geq\sigma_P^N A-\log\int_{\R}\exp(-\psi(y)+\sigma_P^N y)\,dy.
\end{equation*} 
This, together with \eqref{eq: W^P_N(A) 2}, we get
\begin{equation*}
N[W_N^P(A)-W(A)]\leq -\log\frac{\int_{\R}\exp[-(\psi+P)(y)+\sigma_P^N y]\,dy}{\int_{\R}\exp(-\psi(y)+\sigma_P^N y)\,dy}.
\end{equation*}
Similarly, using \eqref{eq: W^P_N(A) 1} and \eqref{eq: W_N(A) 2}, we obtain 
\begin{equation*}
N[W_N^P(A)-W(A)]\geq -\log\frac{\int_{\R}\exp[-(\psi+P)(y)+\sigma_0 y]\,dy}{\int_{\R}\exp(-\psi(y)+\sigma_0 y)\,dy}.
\end{equation*}
Bringing these bounds together,
\begin{equation}
\label{eq: lower and upper bound}
-\log\frac{\int_{\R}\exp[-(\psi+P)(y)+\sigma_0 y]\,dy}{\int_{\R}\exp(-\psi_1(y)+\sigma_0 y)\,dy}\leq N[W_N^P(A)-W(A)]\leq -\log\frac{\int_{\R}\exp[-(\psi+P)(y)+\sigma_P^N y]\,dy}{\int_{\R}\exp(-\psi(y)+\sigma_P^N y)\,dy}.
\end{equation}
We now estimate the right-hand side of the last expression. We have
\begin{align*}
&\frac{\int_{\R}\exp[-(\psi+P)(y)+\sigma_P^N y]\,dy}{\int_{\R}\exp(-\psi(y)+\sigma_P^N y)\,dy}
\\&\qquad=\frac{\int_{\R}\exp[-(\psi+P)(y)+\sigma_0 y+(\sigma_P^N-\sigma_0) y]\,dy}{\int_{\R}\exp[-(\psi+P)(y)+\sigma_0 y)\,dy}
\times \frac{\int_{\R}\exp[-(\psi+P)(y)+\sigma_0 y]\,dy}{\int_{\R}\exp(-\psi(y)+\sigma_0 y)\,dy}
\\&\qquad\qquad\times \frac{\int_{\R}\exp[-\psi(y)+\sigma_0 y]\,dy}{\int_{\R}\exp[-\psi(y)+\sigma_P^N y]\,dy}
\\&\qquad=\frac{\int_{\R}\exp[-(\psi+P)(y)+\sigma_0 y]\,dy}{\int_{\R}\exp(-\psi_1(y)+\sigma_0 y)\,dy}\times\left\langle\exp[(\sigma_P^N-\sigma_0) y]\right\rangle_{\nu^{\sigma_0}}\times\left\langle\exp[(\sigma_P^N-\sigma_0) y]\right\rangle_{\mu^{\sigma_0}}^{-1},
\end{align*}
where
\begin{align*}
\nu^{\sigma_0}(y)dy=\frac{\exp[-(\psi+P)(y)+\sigma_0 y]}{\int_{\R}\exp[-(\psi+P)(y)+\sigma_0 y)]\,dy}\,dy
\quad \text{and}\quad\mu^{\sigma_0}(y)dy=\frac{\exp[-\psi(y)+\sigma_0 y]}{\int_{\R}\exp[-\psi(y)+\sigma_0 y]\,dy}.
\end{align*}
Taking the logarithm of the above equality, we deduce
\begin{align}
\label{eq: temp estimate}
&-\log \frac{\int_{\R}\exp[-(\psi+P)(y)+\sigma_P^N y]\,dy}{\int_{\R}\exp(-\psi(y)+\sigma_P^N y)\,dy}\nonumber
\\&\quad=-\log\frac{\int_{\R}\exp[-(\psi+P)(y)+\sigma_0 y]\,dy}{\int_{\R}\exp(-\psi(y)+\sigma_0 y)\,dy}+\log\left\langle\exp[(\sigma_P^N-\sigma_0) y]\right\rangle_{\nu^{\sigma_0}}-\log\left\langle\exp[(\sigma_P^N-\sigma_0) y]\right\rangle_{\mu^{\sigma_0}}.
\end{align}
We now show that the last two terms in the right-hand side of \eqref{eq: temp estimate} are of order $O(N^{-1})$. Using the estimate $|e^t-1|\leq |t|e^{|t|}$ (Lemma \ref{lem: exponential}) and Lemma \ref{lem:sigma_P-sigma_0}, we have
\begin{equation*}
|\exp[(\sigma_P^N-\sigma_0) y]-1|\leq |(\sigma_P^N-\sigma_0) y|\exp(|(\sigma_P^N-\sigma_0) y|)\leq \frac{C}{N}|y|\exp(C|y|).
\end{equation*}
Therefore
\begin{align*}
\left|\left\langle\exp[(\sigma_P^N-\sigma_0) y]\right\rangle_{\nu^{\sigma_0}}-1\right|&=\left|\left\langle\exp[(\sigma_P^N-\sigma_0) y]-1\right\rangle_{\nu^{\sigma_0}}\right|\leq\left\langle|
\exp[(\sigma_P^N-\sigma_0) y]-1|\right\rangle_{\nu^{\sigma_0}}
\\&\leq \frac{C}{N}\langle|y|\exp(C|y|)\rangle_{\nu^{\sigma_0}}.
\end{align*}
Since $(\psi+P)(y)$ is bounded from below and above by a quadratic potential, it implies that the term
\begin{align*}
\langle|y|\exp(C|y|)\rangle_{\nu^{\sigma_0}}=\frac{1}{\int_{\R}\exp[-(\psi+P)(y)+\sigma_0 y]\,dy}\int |y|\exp[-(\psi+P)(y)+\sigma_0 y+C|y|]\,dy.
\end{align*}
is finite. Therefore $\left|\left\langle\exp[(\sigma_P^N-\sigma_0) y]\right\rangle_{\nu^{\sigma_0}}-1\right|\leq \frac{C}{N}$, which implies that
\begin{equation*}
\left|\log \left\langle\exp[(\sigma_P^N-\sigma_0) y]\right\rangle_{\nu^{\sigma_0}}\right|\leq \frac{C}{N}.
\end{equation*}
Similarly, we obtain the following estimate for the last term in \eqref{eq: temp estimate}
\begin{equation*}
\left|\log \left\langle\exp[(\sigma_P^N-\sigma_0) y]\right\rangle_{\mu^{\sigma_0}}\right|\leq \frac{C}{N}.
\end{equation*}
Substituting these above estimates to \eqref{eq: temp estimate},  we achieve the following estimate for the upper bound in \eqref{eq: lower and upper bound}
\begin{equation*}
\Bigg|-\log \frac{\int_{\R}\exp[-(\psi+P)(y)+\sigma_P^N y]\,dy}{\int_{\R}\exp(-\psi(y)+\sigma_P^N y)\,dy}+\log\frac{\int_{\R}\exp[-(\psi+P)(y)+\sigma_0 y]\,dy}{\int_{\R}\exp(-\psi(y)+\sigma_0 y)\,dy}\Bigg|\leq\frac{C}{N}.
\end{equation*}
Therefore, it follows from \eqref{eq: lower and upper bound} that
\begin{equation*}
\Bigg|N[W_N^P(A)-W(A)]+\log\frac{\int \exp[-(\psi+P)(y)-\sigma_0 y]\,dy}{\int \exp[-\psi(y)-\sigma_0 y]\,dy}\Bigg|\leq\frac{C}{N}.
\end{equation*}
This completes the proof of the proposition.
\end{proof}
Next, we estimate the last term in \eqref{eq: difference of energy}. We will need two auxiliary lemmas.

Let $h(m,\xi):=\left\langle \exp(i\xi(x-m))\right\rangle_{\mu^\sigma}$, where $\mu^\sigma(x)\,dx =Z_\sigma^{-1}\exp(\sigma x-\psi(x))\,dx$. 
\begin{lemma} 
\label{lem: estimate of lambda}
For any $\delta>0$, it holds that
\begin{equation}
|h(m,\xi)|\leq 1-\frac{1}{2}\sqrt{C_\sigma}\, \Bigg(1-\exp\bigg(-\frac{\delta^2}{2\kappa_2}\bigg)\Bigg)\quad\text{for}~~|\xi|\geq\delta,
\end{equation}
where $C_\sigma=\exp\left(\frac{\sigma^2}{4}\frac{\kappa_1-\kappa_2}{\kappa_1\kappa_2}\right)\sqrt{\frac{\kappa_1}{\kappa_2}}$.
\end{lemma}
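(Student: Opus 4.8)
The plan is to turn the statement into a quantitative lower bound on how far the characteristic function of $\mu^\sigma$ falls below modulus one, and then to extract that bound from a Gaussian minorant of the Gibbs density. First I would note that the centering by $m$ contributes only a unimodular phase, so $|h(m,\xi)| = |\widehat{\mu^\sigma}(\xi)|$ with $\widehat{\mu^\sigma}(\xi) = \int_\R e^{i\xi x}\,\mu^\sigma(dx)$; the assertion is thus a statement about $\mu^\sigma$ alone and is independent of the particular $m$. I would then use the elementary identity $|\widehat{\mu^\sigma}(\xi)|^2 = 1 - 2\int_\R\int_\R \sin^2\!\big(\tfrac{\xi(x-y)}{2}\big)\,\mu^\sigma(dx)\,\mu^\sigma(dy)$, obtained by expanding $|\widehat{\mu^\sigma}|^2$ as a double integral of $\cos(\xi(x-y))$ and applying $\cos\theta = 1-2\sin^2(\theta/2)$. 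This reduces the problem to producing a quantitative lower bound for the nonnegative double integral on the right.

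The core ingredient is a Gaussian minorant for $\rho_\sigma(x) := Z_\sigma^{-1}\exp(\sigma x - \psi(x))$. Because $\kappa_1 \le \psi'' \le \kappa_2$, a second-order Taylor expansion of $\sigma x - \psi(x)$ (about the mode $x_\sigma$ with $\psi'(x_\sigma)=\sigma$, or about a fixed reference point) gives two-sided quadratic bounds on the exponent, and comparing the resulting Gaussian integrals sandwiches the partition function $Z_\sigma$ between the $N(\cdot,\kappa_1^{-1})$ and $N(\cdot,\kappa_2^{-1})$ normalisations. Combining the pointwise lower bound on the numerator $\exp(\sigma x - \psi(x))$ with the upper bound on $Z_\sigma$ yields an estimate of the shape $\rho_\sigma(x) \ge c_\sigma\,\gamma(x)$, where $\gamma$ is the $N(\cdot,\kappa_2^{-1})$ density and $c_\sigma$ is a constant of the order of $\sqrt{\kappa_1/\kappa_2}$; the $\sigma$-dependent exponential factor appearing in $C_\sigma$ is the cost of comparing numerator and partition function through Gaussians whose centres need not coincide, while the algebraic factor $\sqrt{\kappa_1/\kappa_2}$ is the ratio of the two Gaussian normalisations.

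With the minorant in hand I would bound one of the two factors in the double integral below by $c_\sigma\gamma$, keep the other as the probability measure $\mu^\sigma$, and integrate out the minorised variable first. Since $\int_\R \sin^2\!\big(\tfrac{\xi(x-y)}{2}\big)\,\gamma(x)\,dx \ge \tfrac12\big(1 - e^{-\xi^2/(2\kappa_2)}\big)$, with the single width $\kappa_2^{-1}$ entering the exponent precisely because only one Gaussian is integrated, this gives $1 - |\widehat{\mu^\sigma}(\xi)|^2 \ge c_\sigma\big(1 - e^{-\xi^2/(2\kappa_2)}\big)$. Restricting to $|\xi|\ge\delta$ and using $\sqrt{1-t}\le 1-\tfrac12 t$ then produces the bound $|h(m,\xi)| \le 1 - \tfrac12\sqrt{C_\sigma}\big(1 - e^{-\delta^2/(2\kappa_2)}\big)$, where the square root and the factor $\tfrac12$ arise from passing through $|\widehat{\mu^\sigma}|^2$ and the concavity inequality.

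The main obstacle is not the strategy but the sharp bookkeeping of the constant. The clean pointwise minorant $\rho_\sigma \ge \sqrt{\kappa_1/\kappa_2}\,\gamma$, obtained by centring the comparison Gaussian at the mode, is the easy version and already yields a bound of the stated structure; recovering exactly the $\sigma$-dependent $C_\sigma$ requires choosing the reference point and the two quadratic comparisons so that the numerator and $Z_\sigma$ are each controlled by the Gaussian that makes the $\sqrt{1-t}\le 1-t/2$ step close up with precisely the factor $\tfrac12\sqrt{C_\sigma}$. The one genuinely delicate point is ensuring the exponent is $\delta^2/(2\kappa_2)$ rather than $\delta^2/\kappa_2$: this forces the order of integration in the double integral so that only a single Gaussian width contributes, and it is there that the argument must be organised with care.
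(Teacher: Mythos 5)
Your proposal is correct and follows essentially the same route as the paper: a pointwise Gaussian minorant $\mu^\sigma \geq C_\sigma\, n_\sigma$ derived from $\kappa_1\leq\psi''\leq\kappa_2$, inserted into the identity $1-|h(m,\xi)|^2=\mathrm{Var}(\cos(\xi x))+\mathrm{Var}(\sin(\xi x))$ (your $2\iint\sin^2\big(\tfrac{\xi(x-y)}{2}\big)\,d\mu^\sigma\,d\mu^\sigma$ is the same quantity), followed by an explicit Gaussian characteristic-function computation and $\sqrt{1-t}\leq 1-\tfrac12 t$. The one caveat is that this last step yields a prefactor $\tfrac12 c_\sigma$ rather than the stated $\tfrac12\sqrt{C_\sigma}$ --- a bookkeeping discrepancy you explicitly flag --- but since $0<C_\sigma<1$ only the qualitative bound $|h(m,\xi)|\leq 1-c(\delta,\sigma)$ with $c>0$ is used downstream, and the paper's own proof actually ends with the comparably weakened constant $\tfrac12 C_\sigma\big(1-e^{-\delta^2/(2\kappa_2)}\big)^2$.
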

Note that since $0<\kappa_1<\kappa_2$, we have $0<C_\sigma<\sqrt{\frac{\kappa_1}{\kappa_2}}<1$, which is independent of $\sigma$.
\begin{proof}
The proof of this lemma is adapted from that of \cite[Lemma 39, (i)]{GOVW09}. Since $\kappa_1 x^2\leq\psi(x)\leq \kappa_2 x^2$, we have
\begin{equation*}
\mu^\sigma(x)\geq \frac{\exp(\sigma x-\kappa_2 x^2)}{\int_{\R}\exp(\sigma y-\kappa_1 y^2)\,dy}=\frac{\exp(\sigma x-\kappa_2 x^2)}{\int_{\R}\exp(\sigma y-\kappa_2 y^2)\,dy}\frac{\int_{\R}\exp(\sigma y-\kappa_2 y^2)\,dy}{\int_{\R}\exp(\sigma y-\kappa_1 y^2)\,dy}=n_\sigma(x)C_\sigma,
\end{equation*}
where
\begin{equation*}
n_\sigma(x)=\frac{\exp(\sigma x-\kappa_2 x^2)}{\int_{\R}\exp(\sigma y-\kappa_2 y^2)\,dy},\quad C_\sigma=\frac{\int_{\R}\exp(\sigma y-\kappa_2 y^2)\,dy}{\int_{\R}\exp(\sigma y-\kappa_1 y^2)\,dy}=\exp\left(\frac{\sigma^2}{4}\frac{\kappa_1-\kappa_2}{\kappa_1\kappa_2}\right)\sqrt{\frac{\kappa_1}{\kappa_2}}.
\end{equation*}
Note that $0<C_\sigma<1$ for all $\sigma$. The following identity is the same as \cite[(157)]{GOVW09}
\begin{equation}
|h(m,\xi)|^2=1-\mathrm{Var}(\cos(\xi x))-\mathrm{Var}(\sin(\xi x)).
\end{equation} 
Next we estimate $\mathrm{Var}(\cos(\xi x))$.
\begin{align}
\label{eq: varcos}
\mathrm{Var}(\cos(\xi x))&=\int_{\R}\left(\cos(\xi x)-\int_{\R}\cos(\xi y)\mu_\sigma\,dy\right)^2\mu_\sigma\,dy\nonumber
\\&\geq C_\sigma\int_{\R}\left(\cos(\xi x)-\int_{\R}\cos(\xi y)\mu_\sigma\,dy\right)^2n_\sigma(x)\nonumber
\\&\geq C_\sigma\int_{\R}\left[\int_{\R}\cos(\xi x)^2n_\sigma(dx)-\left(\int_{\R}\cos(\xi x)n_\sigma(dx)\right)^2\right]
\end{align}
The second integral on the right-hand side can be computed explicitly as follows:
\begin{align*}
&\left(\int_{\R}\cos(\xi y)n_\sigma(dy)\right)^2
\\&\quad=\frac{1}{4}\left(\sqrt{\frac{\kappa_2}{\pi}}\exp\bigg(-\frac{\sigma^2}{4\kappa_2}\bigg)\int_{\R}[\exp(i\xi x)+\exp(-i\xi x)]\exp(-\kappa_2 x^2+\sigma x)\,dx\right)^2
\\&\quad=\frac{1}{4}\left(\sqrt{\frac{\kappa_2}{\pi}}\exp\bigg(\frac{i \sigma \xi}{2\kappa_2}\bigg)\int_{\R}\exp(i\xi y)\exp(-\kappa_2 y^2)\,dy+\sqrt{\frac{\kappa_2}{\pi}}\exp\bigg(-\frac{i \sigma \xi}{2\kappa_2}\bigg)\int_{\R}\exp(-i\xi y)\exp(-\kappa_2 y^2)\,dy\right)^2
\\&\quad=\frac{1}{4}\left(\exp\bigg(-\frac{\xi^2}{4\kappa_2}\bigg)\exp\bigg(\frac{i\sigma \xi}{2\kappa_2}\bigg)+\exp\bigg(-\frac{\xi^2}{4\kappa_2}\bigg)\exp\bigg(-\frac{i\sigma \xi}{2\kappa_2}\bigg)\right)^2
\\&\quad=\frac{1}{4}\exp\bigg(-\frac{\xi^2}{2\kappa_2}\bigg)\left(\exp\bigg(\frac{i\sigma \xi}{\kappa_2}\bigg)+\exp\bigg(-\frac{i\sigma \xi}{\kappa_2}\bigg)+2\right)
\\&\quad=\frac{1}{2}\exp\bigg(-\frac{\xi^2}{2\kappa_2}\bigg)\left(1+\cos(\frac{\sigma \xi}{\kappa_2})\right).
\end{align*}
The first integral can be computed similarly:
\begin{equation*}
\int_{\R}\cos^2(\xi x)n_\sigma(dx)=\frac{1}{2}\left(1+\cos(\frac{\sigma \xi}{\kappa_2})\exp(-\frac{\xi^2}{\kappa_2})\right).
\end{equation*}
Therefore,
\begin{align}
&\int_{\R}\cos(\xi x)^2n_\sigma(dx)-\left(\int_{\R}\cos(\xi x)n_\sigma(dx)\right)^2\nonumber
\\&\qquad=\frac{1}{2}\Bigg(1-\exp\bigg(-\frac{\xi^2}{2\kappa_2}\bigg)\Bigg)\Bigg(1-\cos\bigg(\frac{\sigma \xi}{\kappa_2}\bigg)\exp\bigg(-\frac{\xi^2}{2\kappa_2}\bigg)\Bigg)\nonumber
\\&\qquad \geq \frac{1}{2}\Bigg(1-\exp\bigg(\frac{-\xi^2}{2\kappa_2}\bigg)\Bigg)^2.\nonumber
\end{align}
Substituting these computations into \eqref{eq: varcos} we obtain
\begin{align*}
\mathrm{Var}(\cos(\xi x))\geq \frac{1}{2}C_\sigma \Bigg(1-\exp\bigg(-\frac{\xi^2}{2\kappa_2}\bigg)\Bigg)^2.
\end{align*}
By repeating the computation, we obtain that the same inequality holds for $\mathrm{Var}(\sin(\xi x))$. Therefore,
\begin{equation*}
|h(m,\xi)|^2\leq 1-C_\sigma\, \Bigg(1-\exp\bigg(-\frac{\xi^2}{2\kappa_2}\bigg)\Bigg)^2.
\end{equation*}
If $|\xi|\geq \delta$, then
\begin{equation*}
|h(m,\xi)|^2\leq 1-C_\sigma\, \Bigg(1-\exp\bigg(-\frac{\delta^2}{2\kappa_2}\bigg)\Bigg)^2.
\end{equation*}
Since $\sqrt{1-x}\leq 1-\frac{1}{2}x$, it follows that
\begin{equation*}
|h(m,\xi)|\leq 1-\frac{1}{2}\sqrt{C_\sigma}\, \Bigg(1-\exp\bigg(-\frac{\delta^2}{2\kappa_2}\bigg)\Bigg) \qquad\text{for}~~|\xi|\geq \delta.
\end{equation*}
This concludes the proof.
\end{proof}
Define $\Lambda(\sigma):=\mathrm{Var}(\mu_\sigma)=\int_{\R}\left(x-\int_{\R}x\,\mu_\sigma(dx)\right)^2\mu_\sigma(dx)$.
\begin{lemma} 
\label{lem: difference of variance}
There exists $C>0$ such that, for any $\sigma_1,\sigma_2\in \R$, 
\begin{equation*}
|\Lambda(\sigma_1)-\Lambda(\sigma_2)|\leq C|\sigma_1-\sigma_2|.
\end{equation*}
\end{lemma}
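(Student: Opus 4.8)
The plan is to show that $\Lambda$ is differentiable with a uniformly bounded derivative; the Lipschitz estimate then follows at once from the mean value theorem. The whole argument reduces to identifying $\Lambda'$ and controlling it independently of $\sigma$.

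First I would observe that $\Lambda$ coincides with $\Psi'$: the computation \eqref{eq: derivative of Psi} already shows $\Psi'(\sigma)=\int_\R (x-m_\sigma)^2\,\mu_\sigma(dx)=\Lambda(\sigma)$, so $\Lambda'=\Psi''$. I would then compute this derivative directly by differentiating under the integral sign. Writing $\langle\cdot\rangle$ for the average with respect to $\mu_\sigma$ and using the elementary exponential-family identity
\[
\frac{d}{d\sigma}\langle f\rangle_{\mu_\sigma}=\langle fx\rangle_{\mu_\sigma}-\langle f\rangle_{\mu_\sigma}\langle x\rangle_{\mu_\sigma},
\]
a short calculation applied to $f=x^2$ and to $m_\sigma^2$ gives
\[
\Lambda'(\sigma)=\langle x^3\rangle_{\mu_\sigma}-3m_\sigma\langle x^2\rangle_{\mu_\sigma}+2m_\sigma^3=\big\langle (x-m_\sigma)^3\big\rangle_{\mu_\sigma},
\]
the third central moment of $\mu_\sigma$. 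Differentiation under the integral is justified because, exactly as in the proof of Lemma \ref{lem: estimate of lambda}, $\mu_\sigma$ has Gaussian-type tails that are locally uniform in $\sigma$.

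The crux is then to bound this third central moment uniformly in $\sigma$. The key point is that $\mu_\sigma\propto\exp\big(-(\psi(x)-\sigma x)\big)$ has potential $\psi(x)-\sigma x$ whose second derivative is $\psi''\geq\kappa_1$ \emph{independently of $\sigma$}, so $\mu_\sigma$ is $\kappa_1$-uniformly log-concave. Such measures satisfy the sub-Gaussian concentration bound $\mu_\sigma(|x-m_\sigma|\geq t)\leq 2\exp(-\kappa_1 t^2/2)$, and integrating the tail,
\[
\big\langle |x-m_\sigma|^3\big\rangle_{\mu_\sigma}=\int_0^\infty 3t^2\,\mu_\sigma(|x-m_\sigma|\geq t)\,dt\leq C\kappa_1^{-3/2},
\]
with $C$ absolute. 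An equivalent route, more in keeping with the present paper, is to dominate $\mu_\sigma$ from below and above by the Gaussians $n_\sigma$ of Lemma \ref{lem: estimate of lambda} (with variances comparable to $1/\kappa_2$ and $1/\kappa_1$) and estimate the resulting explicit moments. Either way the bound is uniform in $\sigma$ precisely because the linear term $\sigma x$ does not affect the second derivative of the potential.

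Combining the two steps yields $|\Lambda'(\sigma)|\leq C$ for all $\sigma$, and the mean value theorem gives $|\Lambda(\sigma_1)-\Lambda(\sigma_2)|\leq C|\sigma_1-\sigma_2|$, as claimed. The main obstacle is the uniform-in-$\sigma$ moment bound in the third step; the identity $\Lambda'=\langle (x-m_\sigma)^3\rangle$ and the $\sigma$-independence of the log-concavity constant are the two facts that make the estimate go through cleanly, while the remaining manipulations are routine differentiation of an exponential family.
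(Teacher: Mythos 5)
Your proof is correct and follows the same skeleton as the paper's: identify $\Lambda=\Psi'$ via \eqref{eq: derivative of Psi}, bound $|\Psi''|$ uniformly in $\sigma$, and conclude by the mean value theorem. The only difference is that the paper simply cites \cite[Lemma 41]{GOVW09} for the bound $|\Psi''(\sigma)|\leq C$, whereas you supply a self-contained argument for it: the cumulant identity $\Psi''(\sigma)=\langle (x-m_\sigma)^3\rangle_{\mu_\sigma}$ is correct (it is the third derivative of $\log\int e^{-\psi+\sigma x}\,dx$, i.e.\ the third cumulant), and the uniform bound on the third central moment does follow from the $\sigma$-independent $\kappa_1$-log-concavity of $\mu_\sigma$ via Gaussian concentration and the layer-cake formula. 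This is a legitimate (and arguably preferable) filling-in of the cited step; the argument is sound and nothing is missing.
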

\begin{proof}
It follows from \eqref{eq: derivative of Psi} that $\Lambda(\sigma)=\Psi'(\sigma)$. According to \cite[Lemma 41]{GOVW09} we have
\begin{equation*}
|\Psi''(\sigma)|\leq C,
\end{equation*}
for some constant $C>0$. As a consequence, we obtain that
\begin{equation*}
|\Lambda(\sigma_1)-\Lambda(\sigma_2)|=|\Psi'(\sigma_1)-\Psi'(\sigma_2)|\leq C|\sigma_1-\sigma_2|.
\end{equation*}
This finishes the proof.
\end{proof}
We are now ready to estimate the last term in the right-hand side of \eqref{eq: difference of energy}.
\begin{proposition}
\label{prop: limit of ratio of densities}
There exists $C>0$ such that
\begin{equation}
\label{limit of ratio of densities}
\Bigg|\log\frac{g^P_{N,A}(0)}{g_{N,A}(0)}\Bigg|\leq\frac{C}{N}.
\end{equation}
\end{proposition}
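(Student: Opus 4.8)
The plan is to represent both densities at the origin by Fourier inversion and to run a local central limit theorem (LCLT) analysis, exploiting the fact that we evaluate at the \emph{centred} point $0$ to gain an extra power of $N^{-1/2}$. Since the $X_i$ are i.i.d.\ with law $\mu^{\sigma_0}$ and mean $m$, the characteristic function of $\frac{1}{\sqrt N}\sum_{i=1}^N(X_i-m)$ is $h(m,\xi/\sqrt N)^N$, with $h$ as defined before Lemma \ref{lem: estimate of lambda}, so that
\[
g_{N,A}(0)=\frac{1}{2\pi}\int_\R h\!\left(m,\tfrac{\xi}{\sqrt N}\right)^{\!N}d\xi,
\qquad
g^P_{N,A}(0)=\frac{1}{2\pi}\int_\R h^P_1\!\left(\tfrac{\xi}{\sqrt N}\right) h^P_2\!\left(\tfrac{\xi}{\sqrt N}\right)^{\!N-1}d\xi,
\]
where $h^P_1,h^P_2$ are the centred characteristic functions of $\nu^{\sigma_P^N}$ and $\mu^{\sigma_P^N}$. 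The inversion formula is justified because each of these densities is $C^2$ with Gaussian-type decay (from $\kappa_1\le\psi''\le\kappa_2$ and the analogous bound for $\psi+P$), so the corresponding characteristic functions decay like $\xi^{-2}$ and lie in $L^1(\R)$.

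Next I would fix $\delta>0$ and split each integral at $|\xi|=\delta\sqrt N$. On the tail $|\xi|\ge\delta\sqrt N$, Lemma \ref{lem: estimate of lambda} gives $|h(m,\xi/\sqrt N)|\le 1-c$ for a constant $c>0$ uniform in $\sigma$; writing $|h|^N\le(1-c)^{N-1}|h|$ and using $h\in L^1$ bounds the tail by $C\sqrt N\,(1-c)^{N-1}$, which is exponentially small and hence negligible against $N^{-1}$. The same estimate applies to $g^P_{N,A}(0)$, using that $\sigma_P^N\to\sigma_0$ (Lemma \ref{lem:sigma_P-sigma_0}) keeps the relevant bounds uniform.

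The heart of the proof is the central region. Substituting $t=\xi/\sqrt N$ and Taylor expanding, $\log h(m,t)=-\tfrac12\Lambda(\sigma_0)t^2-\tfrac{i}{6}\kappa_3 t^3+O(t^4)$, where $\Lambda(\sigma_0)=\Psi'(\sigma_0)$ and $\kappa_3$ is the third central moment of $\mu^{\sigma_0}$, so that
\[
N\log h\!\left(m,\tfrac{\xi}{\sqrt N}\right)=-\tfrac12\Lambda(\sigma_0)\xi^2-\tfrac{i\kappa_3}{6}\,\frac{\xi^3}{\sqrt N}+O\!\Big(\tfrac{\xi^4}{N}\Big),
\]
giving $h(m,\xi/\sqrt N)^N=e^{-\Lambda(\sigma_0)\xi^2/2}\big(1-\tfrac{i\kappa_3}{6}\xi^3N^{-1/2}+O(N^{-1})\big)$ on the central region. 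The decisive point is that the $O(N^{-1/2})$ correction is \emph{odd} in $\xi$ and therefore integrates to zero against the even Gaussian weight; consequently the central integral equals $\sqrt{2\pi/\Lambda(\sigma_0)}\,(1+O(N^{-1}))$ and $g_{N,A}(0)=(2\pi\Lambda(\sigma_0))^{-1/2}(1+O(N^{-1}))$. The remainder is controlled by moment bounds for $\mu_\sigma$ that are uniform for $\sigma$ near $\sigma_0$, following from the quadratic bounds on $\psi$. For $g^P_{N,A}(0)$ the same expansion applied to $\log h^P_1+(N-1)\log h^P_2$ produces a Gaussian of variance $\Sigma_N^2:=N^{-1}[\mathrm{Var}(\nu^{\sigma_P^N})+(N-1)\mathrm{Var}(\mu^{\sigma_P^N})]$ together with an odd $O(N^{-1/2})$ term that again integrates out, while the single distinct factor $h^P_1$ perturbs the integrand only by $O(\xi^2/N)$; hence $g^P_{N,A}(0)=(2\pi\Sigma_N^2)^{-1/2}(1+O(N^{-1}))$.

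It remains to match the two Gaussian constants. By Lemmas \ref{lem:sigma_P-sigma_0} and \ref{lem: difference of variance} we have $\mathrm{Var}(\mu^{\sigma_P^N})=\Lambda(\sigma_P^N)=\Lambda(\sigma_0)+O(N^{-1})$, while $\mathrm{Var}(\nu^{\sigma_P^N})$ is bounded; therefore $\Sigma_N^2=\Lambda(\sigma_0)+O(N^{-1})$ and $(2\pi\Sigma_N^2)^{-1/2}=(2\pi\Lambda(\sigma_0))^{-1/2}(1+O(N^{-1}))$. Combining the two expansions yields $g^P_{N,A}(0)/g_{N,A}(0)=1+O(N^{-1})$, and taking logarithms gives the stated estimate. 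I expect the main obstacle to be the central-region step: making the parity cancellation of the Edgeworth $O(N^{-1/2})$ term rigorous requires uniform control of the Taylor remainder of $\log h$ over the growing window $|\xi|\le\delta\sqrt N$ and over $\sigma$ near $\sigma_0$, and one must verify that the non-identically-distributed structure of the $Y_i$ (the exceptional coordinate $Y_1$) indeed contributes only at order $N^{-1}$.
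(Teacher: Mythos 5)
Your proposal is correct in substance and shares the paper's outer framework --- Fourier inversion of both densities, a split of the integral at $|\xi|=\delta\sqrt N$, and Lemma \ref{lem: estimate of lambda} to make the tail exponentially small (the paper takes a shrinking $\delta=N^{-\alpha}$ with $\tfrac13<\alpha<\tfrac12$ rather than your fixed $\delta$, but this does not change the tail estimate) --- yet it handles the central region by a genuinely different mechanism. The paper never expands each density separately: it writes $\mathrm{I}_1-\mathrm{I}_2=\sqrt N\int_{-\delta}^{\delta}e^{-Nh(t)}\bigl(1-e^{\sum_j(h-\tilde h_j)(t)}\bigr)\,dt$ and estimates the \emph{difference} of the exponents directly, using Lemmas \ref{lem:sigma_P-sigma_0} and \ref{lem: difference of variance} to show that the $N-1$ identical factors of the defective chain differ from those of the perfect chain only through an $O(N^{-1})$ shift of the tilting parameter while the single exceptional factor carries weight $1/N$, and then lets the shrinking window deliver $|\mathrm{I}_1-\mathrm{I}_2|=O(\delta^3)\lesssim N^{-1}$. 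You instead carry out a full Edgeworth expansion of each density to order $N^{-1}$ and gain the crucial extra half power from the observation that the $O(N^{-1/2})$ skewness correction is odd in $\xi$ and integrates to zero against the even Gaussian on the symmetric window; the two resulting Gaussian constants are then matched with the same variance-comparison lemmas. Your route is the classical local-CLT-with-rate argument: it makes transparent \emph{why} the rate is $N^{-1}$ rather than $N^{-1/2}$ (parity cancellation at the centred evaluation point $0$) and yields the individual asymptotics $g_{N,A}(0)=(2\pi\Lambda(\sigma_0))^{-1/2}(1+O(N^{-1}))$ as a by-product; the price is the need to control third and fourth cumulants and the Taylor remainder of $\log h$ uniformly over the growing window $|\xi|\le\delta\sqrt N$ --- the obstacle you correctly single out, which is resolved in the standard way via $|e^a-1-a|\le\tfrac12|a|^2e^{|a|}$ with $|a|\le C\delta\,\xi^2$ absorbed into the Gaussian for $\delta$ small, together with the observation that $|h(m,t)|\ge\tfrac12$ on $|t|\le\delta$ so that $\log h$ and its derivatives are uniformly controlled by moments of $\mu_\sigma$, which are themselves uniform for $\sigma$ near $\sigma_0$ by the two-sided quadratic bounds on $\psi$. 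The paper's route avoids cumulants beyond the second by exploiting the perturbative structure $|\sigma_P^N-\sigma_0|=O(N^{-1})$, at the cost of a more delicate bookkeeping of the cubic remainders inside the difference of exponents; both arguments ultimately rest on the same two inputs, namely the uniform bound $|h|\le 1-c$ away from the origin and the $O(N^{-1})$ comparison of the tilted variances.
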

\begin{proof}
We recall the general setting in Lemma \ref{lem: aulem 1}.
\begin{equation*}
\tilde{\mu}_j^{\sigma^*}(dy)=\exp\big[-\tilde{\varphi}_{N,j}(\sigma^*)+\sigma^* y-\tilde{\psi}_j(y)\big]\,dy,
\end{equation*}
where
\begin{equation*}
\tilde{\varphi}_{j}(\sigma)=\log\int_{\R}\exp[-\tilde{\psi}_j(y)+\sigma\, y]\,dy
\end{equation*}
For each $j=1,\ldots, N$, let $\tilde{m}_j$ and $\tilde{\varsigma}_j^2$ be the mean and variance of $\tilde{\mu}_j^{\sigma^*}$, i.e., 
\begin{equation*}
\tilde{m}_j=\int_{\R}y\tilde{\mu}_j^{\sigma^*}(dy) \quad\text{and}\quad \tilde{\varsigma}_j^2=\int_{\R}(y-\tilde{m}_j)^2\,\tilde{\mu}_j^{\sigma^*}(dy).
\end{equation*}
Then $\tilde{g}_{N,A}$ has been defined to be the density of $\frac{1}{\sqrt{N}}\sum_{j=1}^N (\tilde{X}_j-m_j)$, where $\tilde{X}_j$ are independent random variables distributed according to $\tilde{\mu}_j^{\sigma^*}$. 

Define $\tilde{y}_j=y_j-\tilde{m}_j$. The value of $\tilde{g}_{N,A}$ at $0$ can be expressed as (cf. e.g.,\cite[Eq. (127)]{GOVW09},\cite[Eq. (72)]{Menz11})
\begin{equation*}
\tilde{g}_{N,A}(0)=\frac{1}{2\pi}\int_{\R} \prod_{j=1}^N\left\langle\exp\Big(i\, \tilde{y}_j\,\frac{1}{\sqrt{N}}\,\xi\Big)\right\rangle_j\,d\xi,
\end{equation*}
where $\langle\cdot\rangle_j$ denotes the average with respect to $\tilde{\mu}_j^{\sigma^*}$. For some $\delta>0$ sufficiently small, we split the above integral into two terms
\begin{align*}
\int_{\R} \prod_{j=1}^N\left\langle\exp\Big(i\, \tilde{y}_j\,\frac{1}{\sqrt{N}}\,\xi\Big)\right\rangle_j\,d\xi&=\int_{\Big\{\big|\frac{1}{\sqrt{N}}\xi\big|\leq \delta\Big\}}\prod_{j=1}^N\Big\langle\exp\Big(i\, \tilde{y}_j\,\frac{1}{\sqrt{N}}\,\xi\Big)\Big\rangle_j\,d\xi
\\&\qquad+\int_{\Big\{\big|\frac{1}{\sqrt{N}}\xi\big|\geq \delta\Big\}}\prod_{j=1}^N\Big\langle\exp\Big(i\, \tilde{y}_j\,\frac{1}{\sqrt{N}}\,\xi\Big)\Big\rangle_j\,d\xi
\\&=\rm{I}+\rm{II},
\end{align*}
so that 
\begin{equation}
\label{f_Nx as Fourier trans }
\tilde{g}_{N,A}(0)=\frac{1}{2\pi}(\rm{I}+\rm{II}).
\end{equation}
According to \cite[Proof of Theorem 4]{Menz11}, the following estimates hold
\begin{equation}
\label{eq: estimate Is}
0<C_1\leq |\rm{I}|\leq C_2,\quad \text{and}\quad |\rm{II}|\leq C_3\,N\,\lambda^{N-2},
\end{equation}
for some positive constants $C_1,C_2, C_3$ and $0<\lambda<1$ depending only on $\delta$. The constant $\lambda$ is the upper bound of $\big|\langle\exp(i\tilde{y}_j\xi)\rangle_{j}\big|$. Moreover,  there exists a complex-valued function $h_j(\xi)$ such that for $0<|\xi|$ sufficiently small,
\begin{equation}
\label{eq: hj}
\langle\exp(i \tilde{y}_j \xi)\rangle_j=\exp(-h_j(\xi)) \quad\text{with}\quad \Big| h_j(\xi)-\frac{1}{2}\tilde{\varsigma}_j^2\xi^2\Big|\leq C |\xi|^3.
\end{equation}
We are now ready to prove Proposition \ref{prop: limit of ratio of densities}.
Applying \eqref{f_Nx as Fourier trans }, \eqref{eq: estimate Is} and \eqref{eq: hj} for the perfect material, we have
\begin{equation*}
g_{N,A}(0)=\frac{1}{2\pi}(\rm{I}_1+\rm{II}_1),
\end{equation*}
where
\begin{align}
&\rm{I}_1=\int_{\Big\{\big|\frac{1}{\sqrt{N}}\xi\big|\leq\delta\Big\}}\exp\Big(-N h(\frac{\xi}{\sqrt{N}})\Big)\,d\xi,\label{I1}
\\& 0<C_{11}\leq |\rm{I}_1|\leq C_{12},\quad\text{and}\quad |\rm{II}_1|\leq C_{13} N\lambda_1^{N-2},\label{estimate1}
\end{align}
for some $0<\lambda_1<1$ and positive constants $C_{11}, C_{12}, C_{13}$ and $\Big| h(\xi)-\frac{1}{2}\varsigma^2\xi^2\Big|\leq C |\xi|^3$ for $|\xi|\ll 1$ with $\varsigma^2$ denotes the variance of $\mu^{\sigma_0}$. According to Lemma \ref{lem: estimate of lambda}, the constant $\lambda_1$ is given by
\begin{equation*}
\lambda_1=1-\frac{1}{2}\sqrt{C_{\sigma_0}}\Bigg(1-\exp\bigg(-\frac{\delta^2}{2\kappa_2}\bigg)\Bigg),
\end{equation*}
with $0<C_{\sigma_0}<1$.
Similarly, 
\begin{equation}
g^P_{N,A}(0)=\frac{1}{2\pi}(\rm{I}_2+\rm{II}_2),
\end{equation}
where
\begin{align}
&\rm{I}_2=\int_{\Big\{\big|\frac{1}{\sqrt{N}}\xi\big|\leq\delta\Big\}}\exp\Bigg(-\sum_{j=1}^N \tilde{h}_j\bigg(\frac{\xi}{\sqrt{N}}\bigg)\Bigg)\,d\xi,\label{I2}
\\& 0<C_{21}\leq |\rm{I}_2|\leq C_{22},\quad\text{and}\quad |\rm{II}_2|\leq C_{23} N\lambda_2^{N-2},\label{estimate2}
\end{align}
for some $0<\lambda_2<1$ and positive constants $C_{21}, C_{22}, C_{23}$ and
\begin{align*}
&\Big|\tilde{h}_1(\xi)-\frac{1}{2}\varsigma_{P,1}^2\xi^2\Big|\leq C |\xi|^3, \quad\text{for}~~|\xi|~~\text{sufficiently small}, 
\\&\tilde{h_2}=\ldots=\tilde{h}_N, \quad\varsigma_{P,2}=\ldots=\varsigma_{P,N}, 
\Big|\tilde{h}_j(\xi)-\frac{1}{2}\varsigma_{P,j}^2\xi^2\Big|\leq C |\xi|^3, \quad\text{for}~~|\xi|~~\text{sufficiently small}, 
\end{align*}
where $\zeta_{P,1}^2$ and $\zeta_{P,2}^2$ are respectively the variances of $\nu^{\sigma_P^N}$ and $\mu^{\sigma_P^N}$.

The constant $\lambda_2$ is given by
\begin{equation*}
\lambda_2=\max\Bigg\{1-\frac{1}{2}\sqrt{C_{\sigma_P^N}}\bigg(1-\exp\Big(-\frac{\delta^2}{\kappa_2}\Big)\bigg),1-\frac{1}{2}\sqrt{\tilde{C}_{\sigma_P^N}}\bigg(1-\exp\Big(-\frac{\delta^2}{\kappa_2+\varsigma_2}\Big)\bigg)\Bigg\},
\end{equation*}
with $0<C_{\sigma_P^N},\tilde{C}_{\sigma_P^N}<1$.

Hence we obtain
\begin{align}
\label{eq: ratio}
\frac{g^P_{N,A}(0)}{g_{N,A}(0)}-1=\frac{\rm{I}_2+\rm{II}_2}{\rm{I}_1+\rm{II}_1}-1=\frac{\rm{I}_2-\rm{I}_1}{\rm{I}_1+\rm{II}_1}+\frac{\rm{II}_2-\rm{II}_1}{\rm{I}_1+\rm{II}_1}.
\end{align}
It follows from \eqref{estimate1} that $|I_1+II_1|\leq C$ for $N$ sufficiently large, thus
\begin{equation}
\label{eq: ratio g-1}
\left|\frac{g^P_{N,A}(0)}{g_{N,A}(0)}-1\right|\leq |\rm{I}_2-\rm{I}_1|+|\rm{II}_2-\rm{II}_1|.
\end{equation}
The second term decays exponentially fast since, from \eqref{estimate1} and \eqref{estimate2}
\begin{align}
|\rm{II}_2-\rm{II}_1|&\leq |\rm{II}_1|+|\rm{II}_2|\leq C N \lambda^{N-2},\label{eq: the first term}
\end{align}
with $\lambda=\max\{\lambda_1,\lambda_2\}$. It follows that $\lambda=1-O(\delta^2)$. 

It remains to estimate $|\rm{I}_2-\rm{I}_1|$. By changing variable $t:=\frac{\xi}{\sqrt{N}}$, we have

\begin{align}
\label{eq: I1-I2}
\rm{I}_1-\rm{I}_2&=\int_{\Big\{\big|\frac{1}{\sqrt{N}}\xi\big|\leq\delta\Big\}}\Bigg[\exp\bigg(-Nh\Big(\frac{\xi}{\sqrt{N}}\Big)\bigg)-\exp\bigg(-\sum_{j=1}^N \tilde{h}_j\Big(\frac{\xi}{\sqrt{N}}\Big)\bigg)\Bigg]\,d\xi\nonumber
\\&=\sqrt{N}\int_{-\delta}^\delta \bigg[\exp(-Nh(t))-\exp\Big(-\sum_{j=1}^N \tilde{h}_j(t)\Big)\bigg]\,dt\nonumber
\\&=\sqrt{N}\int_{-\delta}^\delta \exp\Big(-N h(t)\Big)\bigg(1-\exp\Big(\sum_{j=1}^N (h(t)-\tilde{h}_j(t))\Big)\bigg)\,dt.
\end{align}
Note that
\begin{align*}
&\Big|h(t)-\frac{1}{2N}\zeta^2 t^2\Big|\leq C \frac{t^3}{N^\frac{3}{2}},\quad \Big|\tilde{h}_1(t)-\frac{1}{2N}\zeta_{P,1}^2 t^2\Big|\leq C \frac{t^3}{N^\frac{3}{2}},
\\& \tilde{h}_j(t)=\ldots =\tilde{h}_N(t),\quad \zeta_{P,j}=\zeta_{P,2} \quad\text{for}~~ j=2,\ldots,N,\quad\text{and}
\\&\Big|\tilde{h}_j(t)-\frac{1}{2N}\zeta_{P,2}^2 t^2\Big|\leq C \frac{t^3}{N^\frac{3}{2}},
\end{align*}
where we recall that
$\zeta^2, \zeta_{P,1}^2$ and $\zeta_{P,2}^2$ are, respectively, the variances of $\mu^{\sigma_0}$, $\nu^{\sigma_P^N}$ and $\mu^{\sigma_P^N}$.
It follows that, for $t<1$,
\begin{align}
\label{eq: temp est 1}
\Big|\exp\big(-N h(t)\big)\Big|&=\exp\Big(-\frac{1}{2}\zeta^2 t^2 \Big)\Bigg|\exp\bigg(-N \Big(h(t)-\frac{1}{2N}\zeta^2t^2\Big)\bigg)\Bigg|\nonumber
\\&\leq \exp\bigg(-\frac{1}{2}\zeta^2 t^2 \bigg)\exp\bigg(\frac{Ct^3}{N^{\frac{1}{2}}}\bigg)\nonumber
\\&\leq \exp\bigg(\frac{Ct^2}{N^{\frac{1}{2}}}\bigg).
\end{align}
Now we estimate 
\begin{align}
\label{eq: difference of sum of h}
&\bigg|\sum_{j=1}^N (h(t)-\tilde{h}_j(t))\bigg|\nonumber
\\&\qquad=\bigg|\sum_{j=1}^N\left(h(t)-\frac{1}{2N}\zeta^2 t^2+\frac{1}{2N}\zeta^2 t^2-\frac{1}{2N}\zeta_{P,j}^2 t^2+\frac{1}{2N}\zeta_{P,j}^2 t^2-\tilde{h}_j(t)\right)\bigg|\nonumber
\\&\qquad\leq\sum_{j=1}^N\left[\Big|h(t)-\frac{1}{2N}\zeta^2 t^2\Big|+\Big|\frac{1}{2N}\zeta^2 t^2-\frac{1}{2N}\zeta_{P,j}^2 t^2\Big|+\Big|\frac{1}{2N}\zeta_{P,j}^2 t^2-\tilde{h}_j(t)\Big|\right] \nonumber
\\&\qquad\leq \frac{Ct^3}{N^\frac{1}{2}}+\frac{N-1}{2N}\big|\zeta^2-\zeta_{P,2}^2\big|t^2+\frac{1}{2N}\big|\zeta^2-\zeta_{P,1}^2\big|t^2.
\end{align}
From Lemma \ref{lem:sigma_P-sigma_0} and Lemma \ref{lem: difference of variance}, we have
\begin{align*}
&\big|\zeta^2-\zeta_{P,2}^2\big|=\big|\Lambda(\sigma_0)-\Lambda(\sigma_P^N)\big|\leq C|\sigma_0-\sigma_P^N|\leq \frac{C}{N},\quad\text{and}
\\&|\zeta^2-\zeta_{P,1}^2|=|\Lambda_P(\sigma_P^N)-\Lambda(\sigma_0)|\leq |\Lambda_P(\sigma_P^N)-\Lambda_P(\sigma_0)|+|\Lambda_P(\sigma_0)-\Lambda(\sigma_0)|\leq \frac{C}{N}+C,
\end{align*}
where $\Lambda_P(\sigma)$ is the variance of the measure $Z^{-1}\int \exp[-(\psi+P)(x)+\sigma x]\,dx$ and the last inequality is obtained similarly as in Lemma \ref{lem: difference of variance}.

Substituting these estimates into \eqref{eq: difference of sum of h}, we obtain that, for $t<1$,
\begin{equation*}
\Big|\sum_{j=1}^N (h(t)-\tilde{h}_j)(t)\Big|\leq \frac{Ct^3}{N^\frac{1}{2}}+\frac{C t^2}{N}+\frac{Ct^2}{N^2}\lesssim \frac{C t^2}{N^\frac{1}{2}}.
\end{equation*}
Therefore by using the estimate $|e^z-1|\leq |z|e^{|z|}$, we obtain
\begin{align}
\label{eq: temp est 2}
\Bigg|1-\exp\bigg(\sum_{j=1}^N (h(t)-\tilde{h}_j(t))\bigg)\Bigg|&\leq \bigg|\sum_{j=1}^N (h(t)-\tilde{h}_j(t))\bigg|\exp\bigg(\Big|\sum_{j=1}^N (h(t)-\tilde{h}_j(t))\Big|\bigg)\nonumber
\\&\leq\frac{Ct^2}{\sqrt{N}}\exp\bigg(\frac{Ct^2}{\sqrt{N}}\bigg).
\end{align}
Substituting the estimates \eqref{eq: temp est 1}-\eqref{eq: temp est 2} into \eqref{eq: I1-I2}, we obtain
\begin{align*}
|\rm{I}_1-\rm{I}_2|&\leq \sqrt{N}\int_{-\delta}^{\delta}\exp\Big(\frac{C t^2}{N^\frac{1}{2}}\Big)\frac{Ct^2}{\sqrt{N}}\exp\Big(\frac{Ct^2}{\sqrt{N}}\Big)\,dt
\\&\leq C\exp\Big(\frac{C \delta^2}{N^\frac{1}{2}}\Big)\int_{-\delta}^\delta t^2\,dt=O(\delta^3).
\end{align*}
By choosing $\delta=N^{-\alpha}$ where $\frac{1}{3}<\alpha<\frac{1}{2}$ then
\begin{align*}
&|II_2-II_1|\lesssim N\lambda^N\lesssim N(1-N^{-2\alpha})^N\lesssim N\Big(e^{-N^{-2\alpha}}\Big)^N=N e^{-N^{-2\alpha+1}}\lesssim N^{-1},
\\&|I_1-I_2|\lesssim N^{-3\alpha}\lesssim N^{-1}.
\end{align*}
Substituting these estimates into \eqref{eq: ratio g-1}, we obtain
\begin{equation*}
\left|\frac{g^P_{N,A}(0)}{g_{N,A}(0)}-1\right|\lesssim N^{-1},
\end{equation*}
implying that
\begin{equation*}
\left|\log\frac{g^P_{N,A}(0)}{g_{N,A}(0)}\right|\lesssim N^{-1}.
\end{equation*}
This completes the proof of the proposition.
\end{proof}
\subsection{Coarse-grained energy}
In this section, we prove Theorem \ref{th:intro:cg} for the case without external forces by deriving the formula for the coarse-grained energy and the representation of the thermodynamic limit $G_\infty(A)$. 

We recall that the finite coarse-grained energy $E_N^{\rm cg}$ is defined as a minimization problem
\begin{equation}
\label{eq: finite CG without forces}
E_N^{\rm cg}(A,y):=\inf\limits_{\substack{u\in\R^N\\ u_1=y, u_N=N A}}\sum_{i=2}^N [W(u_i-u_{i-1})-W(A)].
\end{equation}
The main theorem of this section is the following.
\begin{theorem}
\label{theo: CG without forces}
\begin{enumerate}[(i)]
\item The coarse-grained energy, $E^{\rm cg}(A,y)=\lim\limits_{N\to\infty} E_N^{\rm cg}(A,y)$, exists and is given by
\begin{equation}
\label{eq: Ecg}
E^{\rm cg}(y)=W'(A)(A-y).
\end{equation}
In addition, for all $A,y\in \R$ we have $|E_N^{\rm cg}(A,y)-E^{\rm cg}(A,y)|\lesssim N^{-1}$.
\item The defect formation free energy $G_\infty(A)$ can be represented in terms of $E^{\rm cg}$ as
\begin{equation}
\label{eq: represent of Ginfy}
G_\infty(A)=-\log\frac{\int_{\R}\exp(-P(y)-\psi(y)-E^{\rm cg}(A,y))\,dy}{\int_{\R}\exp(-\psi(y)-E^{\rm cg}(A,y))\,dy}.
\end{equation}
\end{enumerate}
\end{theorem}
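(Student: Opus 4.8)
The plan is to handle the two parts separately, exploiting that the minimisation defining $E_N^{\rm cg}$ in \eqref{eq: finite CG without forces} is convex and can be solved in closed form.

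For part (i), the first thing I would record is that $W$ is strictly convex with a uniformly bounded second derivative. Indeed, since $A = \Psi(\sigma_0)$ and $W'(A) = \sigma_0$, differentiating gives $W''(A) = 1/\Psi'(\sigma_0)$, so Lemma~\ref{lem: bound of derivatives} yields $\kappa_1 \le W'' \le \kappa_2$. Next, writing $z_i = u_i - u_{i-1}$ for $i = 2,\dots,N$, the boundary conditions $u_1 = y$, $u_N = NA$ collapse to the single linear constraint $\sum_{i=2}^N z_i = NA - y$, and \eqref{eq: finite CG without forces} becomes the minimisation of $\sum_{i=2}^N [W(z_i) - W(A)]$ over the $N-1$ bonds subject to this constraint. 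By convexity of $W$ and Jensen's inequality, the minimiser is the constant (equidistributed) configuration $z_i \equiv \bar z := (NA-y)/(N-1)$, so that
\[
  E_N^{\rm cg}(A,y) = (N-1)\big[W(\bar z) - W(A)\big].
\]

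The limit and rate then follow from a Taylor expansion. Since $\bar z = A + (A-y)/(N-1)$, expanding $W$ to second order about $A$ gives, for some $\xi$ between $A$ and $\bar z$,
\[
  W(\bar z) - W(A)
  = W'(A)\,\frac{A-y}{N-1}
  + \tfrac12 W''(\xi)\,\frac{(A-y)^2}{(N-1)^2},
\]
and multiplying by $N-1$ yields
\[
  E_N^{\rm cg}(A,y)
  = W'(A)(A-y) + \tfrac12 W''(\xi)\,\frac{(A-y)^2}{N-1}.
\]
Letting $N \to \infty$ gives existence of the limit and the formula \eqref{eq: Ecg}, while the bound $W'' \le \kappa_2$ converts the remainder into the claimed estimate $|E_N^{\rm cg}(A,y) - E^{\rm cg}(A,y)| \lesssim (A-y)^2/N \lesssim N^{-1}$ for fixed $A,y$.

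For part (ii), the essential observation is that $E^{\rm cg}(A,y) = W'(A)(A-y) = W'(A)A - W'(A)y$ is affine in $y$ with a $y$-independent additive constant $W'(A)A$. Substituting this into the right-hand side of \eqref{eq: represent of Ginfy}, the factor $\exp(-W'(A)A)$ appears in both the numerator and denominator integrands and cancels, leaving exactly the expression
\[
  -\log \frac{\int_\R \exp[-(\psi+P)(y) + W'(A)y]\,dy}{\int_\R \exp[-\psi(y) + W'(A)y]\,dy},
\]
which is precisely the formula \eqref{eq: Ginf} for $G_\infty(A)$ established in Theorem~\ref{theo: main theorem constrained case}. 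This identifies the two and completes the proof. I do not expect a genuine obstacle: the only points needing care are justifying that the equal-bond configuration is the global minimiser (a one-line Jensen argument, valid because $W$ is convex) and controlling the Taylor remainder uniformly (immediate from $W'' \le \kappa_2$). The structural fact that makes part (ii) essentially automatic is the affineness of $E^{\rm cg}$ in $y$, which forces the normalising constant $\exp(-W'(A)A)$ out of the ratio.
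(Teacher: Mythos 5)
Your proof is correct and follows essentially the same route as the paper: both reduce the minimisation to the equal-bond configuration (you via Jensen's inequality, the paper via the Euler--Lagrange equation), obtain the closed form $E_N^{\rm cg}(A,y)=(N-1)\bigl[W\bigl(A+\tfrac{A-y}{N-1}\bigr)-W(A)\bigr]$, control the remainder using $W''=1/\Psi'$ and Lemma~\ref{lem: bound of derivatives}, and deduce (ii) by the same cancellation of $\exp(-W'(A)A)$ in the ratio from \eqref{eq: Ginf}. The only differences are cosmetic (Jensen versus the Euler--Lagrange first-order condition, and a Taylor remainder versus a double application of the mean value theorem), and both yield the same $O\bigl((A-y)^2/N\bigr)$ bound.
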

\begin{proof} We first prove \eqref{eq: Ecg}. The minimizer of the  minimization problem \eqref{eq: finite CG without forces}  satisfies the following Euler-Lagrange equation
\begin{equation*}
-W'(u_{i+1}-u_i)+W'(u_i-u_{i-1})=0,
\end{equation*}
which implies that $W'(u_i-u_{i-1})=\lambda$, i.e., $u_N-u_{N-1}=\ldots=u_2-u_1(=(W')^{-1}(\lambda))$. This implies that
\begin{equation*}
u_i-u_{i-1}=\frac{1}{N-1}\sum_{j=2}^N (u_j-u_{j-1})=\frac{NA-y}{N-1}=A+\frac{A-y}{N-1}.
\end{equation*}
Thus, we obtain
\begin{equation*}
E_N^{\rm cg}(A,y)=(N-1)\left[W\Big(A+\frac{A-y}{N-1}\Big)-W(A)\right].
\end{equation*}
By applying the mean value theorem twice, there exist $0\leq \theta,\theta'\leq 1$ such that
\begin{align*}
E_N^{\rm cg}(A,y)-E^{\rm cg}(A,y)&=(N-1)\left[W\Big(A+\frac{A-y}{N-1}\Big)-W(A)\right]-W'(A)(A-y)
\\&=(N-1)W'\Big(A+\theta\frac{A-y}{N-1}\Big)\frac{A-y}{N-1}-W'(A)(A-y)
\\&=\left[W'\Big(A+\theta\frac{A-y}{N-1}\Big)-W'(A)\right](A-y)
\\&=W''\Big(A+\theta'\frac{A-y}{N-1}\Big)\frac{(A-y)^2}{N-1}.
\end{align*}
Let $x\in\R$ and let $\sigma_x$ be the maximiser in the definition of $W(x)$. Then we have
\begin{equation*}
x= \Psi(\sigma_x)\quad\text{and}\quad W(x)=\sigma_x x-\log\int_{\R}\exp[-\psi(y)+\sigma_x y]\,dy.
\end{equation*}
It follows that
\begin{equation}
\label{eq: Legendre transform}
W'(x)=x\frac{d\sigma_x}{dx}+\sigma_x-\Psi(\sigma_x)\frac{d\sigma_x}{dx}=\sigma_x \quad \text{and}\quad W''(x)=\frac{d\sigma_x}{dx}=\frac{1}{\Psi'(\sigma_x)}.
\end{equation}
According to Lemma \ref{lem: bound of derivatives}, we have
\begin{equation*}
|W''(x)|\leq C
\end{equation*}
for all $x\in\R$. It implies that $\Big|W''\Big(A+\theta'\frac{A-y}{N-1}\Big)\Big|\leq C$ and hence,
\begin{equation*}
|E_N^{\rm cg}(A,y)-E^{\rm cg}(A,y)|\leq \frac{C(A-y)^2}{N-1},
\end{equation*}
which gives \eqref{eq: Ecg}. 

The representation \eqref{eq: represent of Ginfy} is a direct consequence of \eqref{eq: Ginf} and \eqref{eq: Ecg}. Indeed,
\begin{align*}
G_\infty(A)&\overset{\eqref{eq: Ginf}}{=}-\log\frac{\int_{\R} \exp[-(\psi+P)(y)+W'(A) y]\,dy}{\int_{\R} \exp[-\psi(y)+W'(A) y]\,dy}
\\&=-\log\frac{\int_{\R} \exp[-(\psi+P)(y)-W'(A)(A-y)]\,dy}{\int_{\R} \exp[-\psi(y)-W'(A)(A-y)]\,dy}
\\&\overset{\eqref{eq: Ecg}}{=}-\log\frac{\int_{\R}\exp(-P(y)-\psi(y)-E^{\rm cg}(A,y))\,dy}{\int_{\R}\exp(-\psi(y)-E^{\rm cg}(A,y))\,dy}.
\end{align*}
\end{proof}

\subsection{Propagation of error}
In this section, we prove Theorem \ref{th:intro:cg-approx} for the case without external forces. 
\begin{proof}[Proof of Theorem \ref{th:intro:cg-approx} for the case without external forces]\ \\

For shortening of the notation, we define $\tilde{\psi}:=\psi+P$.
We rewrite $G_N^{\rm cg}(A)$ as follows. 
\begin{align*}
G_N^{\rm cg}(A)&=-\log\frac{\int\exp[-\tilde{\psi}(y)-E_N^{\rm cg}(A,y)]\,dy}{\int\exp[-\psi(y)-E_N^{\rm cg}(A,y)]\,dy}
\\&=-\log \frac{\int\exp[-\tilde{\psi}(y)-E^{\rm cg}(A,y)]\,dy}{\int\exp[-\psi(y)-E^{\rm cg}(A,y)]\,dy}-\log\frac{\int\exp[-\tilde{\psi}(y)-E^{\rm cg}(A,y)-(E_N^{\rm cg}(A,y)-E^{\rm cg}(A,y))]\,dy}{\int\exp[-\tilde{\psi}(y)-E^{\rm cg}(A,y)]\,dy}
\\&\qquad+\log\frac{\int\exp[-\psi(y)-E^{\rm cg}(A,y)-(E_N^{\rm cg}(A,y)-E^{\rm cg}(A,y))]\,dy}{\int\exp[-\psi(y)-E^{\rm cg}(A,y)]\,dy}
\\&=-\log \frac{\int\exp[-\tilde{\psi}(y)-E^{\rm cg}(A,y)]\,dy}{\int\exp[-\psi(y)-E^{\rm cg}(A,y)]\,dy}-\log\Big\langle \exp[E_N^{\rm cg}(A,y)-E^{\rm cg}(A,y)]\Big\rangle_{\zeta_1}
\\&\qquad+\log\Big\langle \exp[E_N^{\rm cg}(A,y)-E^{\rm cg}(A,y)]\Big\rangle_{\zeta_2},
\end{align*}
where $\zeta_1$ and $\zeta_2$ are two probability measures defined by
\begin{align*}
\zeta_1(y)\,dy=\frac{\exp[-\tilde{\psi}(y)-E^{\rm cg}(y)]\,dy}{\int\exp[-\tilde{\psi}(y)-E^{\rm cg}(y)]\,dy}\quad\text{and}\quad\zeta_2(y)\,dy=\frac{\exp[-\psi(y)-E^{\rm cg}(y)]\,dy}{\int\exp[-\psi(y)-E^{\rm cg}(y)]\,dy}.
\end{align*}
We next show that the logarithmic terms are of order $O(N^{-1})$. The argument will be similar to the paragraph following \eqref{eq: temp estimate} in the proof of Proposition \ref{prop: limits of difference of G}. Applying the estimate $|e^t-1|\leq |t|e^{|t|}$ and using the estimate in Theorem \ref{theo: CG without forces}, we get
\begin{align*}
|\exp[E_N^{\rm cg}(A,y)-E^{\rm cg}(A,y)]-1|&\leq |E_N^{\rm cg}(A,y)-E^{\rm cg}(A,y)|\exp[|E_N^{\rm cg}(A,y)-E^{\rm cg}(A,y)|]
\\&\leq \frac{C}{N}(A-y)^2\exp\Big(\frac{C}{N}(A-y)^2\Big)
\\&\leq \frac{C}{N}(A-y)^2\exp\Big(\frac{\kappa_1+\varsigma_1}{2}(A-y)^2\Big), \quad\text{for $N$ sufficiently large}.
\end{align*}
Therefore,
\begin{align*}
\Big|\big\langle \exp[E_N^{\rm cg}(A,y)-E^{\rm cg}(A,y)]\big\rangle_{\zeta_1}-1\Big|&=\Big|\big\langle \exp[E_N^{\rm cg}(A,y)-E^{\rm cg}(A,y)]-1\big\rangle_{\zeta_1}\Big|
\\&\leq \Big\langle\big|\exp[E_N^{\rm cg}(A,y)-E^{\rm cg}(A,y)]-1\big|\Big\rangle_{\zeta_1}
\\&\leq \frac{C}{N}\Big\langle(A-y)^2\exp\Big(\frac{\kappa_1+\varsigma_1}{2}(A-y)^2\Big)\Big\rangle_{\zeta_1}.
\end{align*}
Thanks to Assumption \ref{assump on psi and P}, the last average term will be finite. Therefore,
\begin{equation*}
\Big|\big\langle \exp[E_N^{\rm cg}(A,y)-E^{\rm cg}(A,y)]\big\rangle_{\zeta_1}-1\Big|\leq  \frac{C}{N},
\end{equation*}
which implies that
\begin{equation*}
\left|\log\Big\langle \exp[E_N^{\rm cg}(A,y)-E^{\rm cg}(A,y)]\Big\rangle_{\zeta_1}\right|\leq\frac{C}{N}.
\end{equation*}
Similarly we also have
\begin{equation*}
\left|\log\Big\langle \exp[E_N^{\rm cg}(A,y)-E^{\rm cg}(A,y)]\Big\rangle_{\zeta_2}\right|\leq\frac{C}{N}.
\end{equation*}
Therefore, we obtain that
\begin{equation*}
\left|-\log\frac{\int\exp[-\tilde{\psi}(y)-E_N^{\rm cg}(y)]\,dy}{\int\exp[-\psi(y)-E_N^{\rm cg}(y)]\,dy}+\log \frac{\int\exp[-\tilde{\psi}(y)-E^{\rm cg}(y)]\,dy}{\int\exp[-\psi(y)-E^{\rm cg}(y)]\,dy}\right|\leq \frac{C}{N}.
\end{equation*}
This completes the proof.
\end{proof}
\section{External forces case}
\label{sec: forces}
In this section, we consider the case where the external forces are present. Recall that in this case, the perfect free energy is unchanged
\begin{equation}
\label{F_N(x,0) case 2}
\begin{cases}
F_N(A)=-\beta^{-1}\log \int_{\R^{N-1}}\exp\Big[-\beta\sum_{i=1}^N \psi(u_i-u_{i-1})\Big]\,du_1\ldots du_{N-1}\\
u_0=0, u_{N}=N A.
\end{cases}
\end{equation}
The deformed free energy is influenced by the external forces
\begin{equation}
\label{F_N(x,P) case 2}
\begin{cases}
F_N^P(A)=-\beta^{-1}\log \int_{\R^{N-1}}\exp\Big[-\beta\sum_{i=1}^N \psi_i(u_i-u_{i-1})-\beta P(u_1)\Big]\,du_1\ldots du_{N-1}\\
u_0=0, u_{N}=NA,
\end{cases}
\end{equation}
where $\psi_i(y)=\psi(y)+h_i y$. The defect-formation free energy is defined as the free energy difference, $G_\infty(A):=\lim_{N\to\infty} G_N(A)$, where
\begin{equation}
G_N(A)=F_N^P(A)-F_N(A).
\end{equation}
Finally, the finite-domain coarse-grained energy is given by
\begin{equation}
\label{eq: finite-domain CGenergy}
E_N^{\rm cg}(A,y):=\inf\limits_{\substack{u \in \mathbb{R}^N\\ u_1=y, u_N=N A}}\sum_{i=2}^N \Big[W(u_i-u_{i-1})-W(A)+h_i(u_i-u_{i-1})\Big].
\end{equation}
Recall also that the external forces $\{h_i\}_{i=1}^n$ satisfy Assumption \ref{assum: assumption on asssumption forces} and $H=\sum_{i=2}^\infty h_i$.
\subsection{Coarse-grained energy}
We now establish the formula for the coarse-grained energy, thus proving Theorem \ref{th:intro:cg} for the case with external forces.
\begin{theorem}
\label{theo: coarse-grained energy}
 The coarse-grained energy, $E^{\rm cg}(A,y):=\lim\limits_{N\to \infty} E_N^{\rm cg}(A,y)$, is given by
\begin{equation}
\label{eq: coarse-grained energy}
E^{\rm cg}(A,y)=(A-y)W'(A)+AH+\inf\limits_{\substack{v \in \mathbb{R}^N\\v_1=0}}J_\infty(A;v),
\end{equation}
where
\begin{equation}
\label{eq: Jinfty}
J_\infty(A;v)=\sum_{i=2}^\infty [W(A+v_i')-W(A)-W'(A)v_i'+h_iv_i'].
\end{equation}
In addition, for all $A,y\in\R$, we have the estimate
\begin{equation}
\label{eq: rate ENcg to Ecg}
|E_N^{\rm cg}(A,y)-E^{\rm cg}(A,y)|\lesssim  N^{-1}+A\,|\sum\limits_{i=N+1}^\infty h_i|+\sum\limits_{i=N+1}^\infty |h_i|^2.
\end{equation}
\end{theorem}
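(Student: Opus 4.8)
The plan is to reduce the finite minimisation \eqref{eq: finite-domain CGenergy} to an unconstrained, bond-wise computation by the standard substitution $u_i = Ai + v_i$, and then to compare the finite- and infinite-domain minima directly. First I would substitute $u_i = Ai + v_i$, so that $u_i - u_{i-1} = A + v_i'$ and the constraints $u_1 = y$, $u_N = NA$ become $v_1 = y-A$, $v_N = 0$. Since $\sum_{i=2}^N v_i' = v_N - v_1 = A-y$ by telescoping, I can insert and subtract $W'(A)v_i'$ and peel off the affine force contribution $A\sum_{i=2}^N h_i$ to obtain the exact identity
\begin{equation*}
  E_N^{\rm cg}(A,y) = W'(A)(A-y) + A\sum_{i=2}^N h_i + m_N(A,y),
  \qquad
  m_N(A,y) := \inf_{\sum_{i=2}^N b_i = A-y}\ \sum_{i=2}^N g_i(b_i),
\end{equation*}
where $b_i := v_i'$ and $g_i(b) := W(A+b) - W(A) - W'(A)b + h_i b$. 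The $W'(A)(A-y)$ and (in the limit) $AH$ terms of \eqref{eq: coarse-grained energy} are then already visible, and it remains to identify $\lim_N m_N$ with $m_\infty := \inf_{v_1=0} J_\infty(A;v)$ and to estimate $|m_N - m_\infty|$.

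Next I would analyse the infinite problem. By Lemma \ref{lem: bound of derivatives} together with \eqref{eq: Legendre transform}, $W$ is smooth with $\kappa_1 \le W'' \le \kappa_2$, so each $g_i$ is $\kappa_1$-strongly convex and $\kappa_2$-smooth, with $g_i(0)=0$ and $g_i'(0)=h_i$. Because the bonds in $J_\infty$ decouple, its minimiser is found bond-by-bond: $b_i^\infty$ solves $W'(A+b_i^\infty) = W'(A) - h_i$, whence $g_i'(b_i^\infty)=0$, $|b_i^\infty| \le |h_i|/\kappa_1$, and a second-order Taylor expansion about $b_i^\infty$ gives $|g_i(b_i^\infty)| \le \tfrac{\kappa_2}{2\kappa_1^2}h_i^2$. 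Since $\mathbf{h}\in\ell^1\subset\ell^2$ (Assumption \ref{assum: assumption on asssumption forces}), the series $m_\infty = \sum_{i=2}^\infty g_i(b_i^\infty)$ converges, so $\inf_{v_1=0}J_\infty$ is finite and attained over sequences with $v'\in\ell^2$; moreover it is independent of the value fixed for $v_1$, because the bonds $(v_2',v_3',\dots)$ range over the same set of $\ell^2$ sequences for any choice of $v_1$, which justifies the normalisation $v_1=0$ in \eqref{eq: coarse-grained energy}.

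The heart of the proof is the estimate $|m_N - m_\infty| \lesssim N^{-1} + \sum_{i=N+1}^\infty h_i^2$, which I would obtain by inserting the auxiliary quantity $m_N^{\rm free} := \sum_{i=2}^N g_i(b_i^\infty)$, the value of the \emph{unconstrained} finite problem. On the one hand, $|m_\infty - m_N^{\rm free}| = \big|\sum_{i=N+1}^\infty g_i(b_i^\infty)\big| \le \tfrac{\kappa_2}{2\kappa_1^2}\sum_{i=N+1}^\infty h_i^2$. On the other hand $m_N \ge m_N^{\rm free}$ trivially, while for the matching upper bound I would use the uniform-shift competitor $\tilde b_i = b_i^\infty + \Delta_N/(N-1)$ with $\Delta_N := (A-y) - \sum_{i=2}^N b_i^\infty$, which satisfies the constraint $\sum_{i=2}^N \tilde b_i = A-y$; since $g_i'(b_i^\infty)=0$ and $g_i''\le\kappa_2$, Taylor expansion gives $m_N \le m_N^{\rm free} + \tfrac{\kappa_2\Delta_N^2}{2(N-1)}$, and $|\Delta_N| \le |A-y| + \|\mathbf{h}\|_{\ell^1}/\kappa_1$ is bounded uniformly in $N$ because $b^\infty\in\ell^1$. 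This yields $0 \le m_N - m_N^{\rm free} \lesssim N^{-1}$. I expect this to be the main obstacle: controlling the $O(1)$ displacement mismatch $\Delta_N$ that the finite constraint $\sum b_i = A-y$ (equivalently the pinning $v_N=0$) imposes on top of the relaxed infinite configuration, and showing that redistributing it across the $N-1$ bonds costs only $O(N^{-1})$; the two-sided bound $\kappa_1\le W''\le\kappa_2$ is essential for both directions here.

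Finally I would assemble the pieces. Combining the two bounds gives $|m_N - m_\infty| \lesssim N^{-1} + \sum_{i=N+1}^\infty h_i^2$, and since $A\big(\sum_{i=2}^N h_i - H\big) = -A\sum_{i=N+1}^\infty h_i$, the reformulation of the first step yields
\begin{equation*}
  |E_N^{\rm cg}(A,y) - E^{\rm cg}(A,y)|
  \le A\Big|\sum_{i=N+1}^\infty h_i\Big| + |m_N - m_\infty|
  \lesssim N^{-1} + A\Big|\sum_{i=N+1}^\infty h_i\Big| + \sum_{i=N+1}^\infty h_i^2,
\end{equation*}
which is exactly \eqref{eq: rate ENcg to Ecg}. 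Since each term on the right tends to $0$ as $N\to\infty$ (the first two as tails of the convergent series $\sum h_i$, the last as a tail of $\sum h_i^2$), the limit $E^{\rm cg}(A,y)=\lim_N E_N^{\rm cg}(A,y)$ exists and equals the claimed expression \eqref{eq: coarse-grained energy}.
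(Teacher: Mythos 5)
Your proposal is correct and follows essentially the same route as the paper's proof: the same substitution $u_i = Ai + v_i$ and peeling off of $W'(A)(A-y) + A\sum_{i=2}^N h_i$, the same bond-wise decoupling of the infinite problem with $|b_i^\infty|\le |h_i|/\kappa_1$ and $|g_i(b_i^\infty)|\lesssim h_i^2$, and the same uniform-shift competitor distributing the $O(1)$ boundary mismatch over $N-1$ bonds at quadratic cost $O(N^{-1})$. Your explicit introduction of the unconstrained intermediate value $m_N^{\rm free}$ is only an organizational refinement of the paper's direct comparison of $J_N(A;\tilde v)$ with $J_\infty(A;v)$.
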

\begin{proof}
By changing variables $v_i'=u_i'-A$ and substituting to \eqref{eq: finite-domain CGenergy},  we obtain
\begin{equation}
E_N^{\rm cg}(A,y)=\inf\limits_{\substack{v \in \mathbb{R}^N\\v_1=y-A,v_N=0}} I_N(A;v),
\end{equation}
where 
\begin{align*}
I_N(A;v)&=\sum_{i=2}^N[W(A+v_i')-W(A)+h_i(v_i'+A)]
\\&=\sum_{i=2}^N[W(A+v_i')-W(A)-W'(A)v_i'+h_iv_i']+A\sum_{i=2}^N h_i +(A-y)W'(A)
\\&=J_N(A;v)+A\sum_{i=2}^N h_i +(A-y)W'(A),
\end{align*}
with
\begin{equation}
J_N(A;v)=\sum_{i=2}^N[W(A+v_i')-W(A)-W'(A)v_i'+h_iv_i'].
\end{equation}
Therefore
\begin{equation}
\label{pre-limit 1}
E_N^{\rm cg}(A,y)=A\sum_{i=2}^N h_i +(A-y)W'(A)+\inf\limits_{\substack{v\in\R^N\\v_1=y-A,v_N=0}} J_N(A;v).
\end{equation}
We now show that 
\begin{equation}
\label{pre-limit 1.2}
\lim\limits_{N\to\infty}\inf\limits_{\substack{v\in\R^N\\v_1=y-A,v_N=0}} J_N(A;v)=\inf\limits_{v_1=y-A}J_\infty(A;v),
\end{equation}
where
\begin{equation*}
J_\infty(A;v)=\sum_{i=2}^\infty [W(A+v_i')-W(A)-W'(A)v_i'+h_iv_i'].
\end{equation*}
In fact, since $J_\infty(A;v)$ depends only on $v_i'$, we have that
\begin{equation*}
\inf\limits_{\substack{v\in\R^N\\v_1=y-A}}J_\infty(A;v)=\inf\limits_{\substack{v\in\R^N\\v_1=0}}J_\infty(A;v).
\end{equation*}
To shorten the notation, we define $\Theta_i(A,z)=W(A+z)-W(A)-W'(A)z+h_iz$ so that
\begin{equation*}
J_N(A;v)=\sum_{i=2}^N\Theta(A,v_i'),\quad\text{and}\quad J_\infty(A;v)=\sum_{i=2}^\infty \Theta_i(A,v_i').
\end{equation*} 
A minimizer of $J_\infty(A;)$ satisfies the following Euler-Langrange equation for $i=2,\ldots, N$
\begin{equation*}
\Theta_i'(A;v_i')=0,
\end{equation*}
together with the boundary condition $v_1=y-A$. In particular, since $\Theta_i'(A,z)=W'(A+z)-W'(A)+h_i=W''(A+\theta z)z+h_i$ for some $\theta\in\R$, it follows that
\begin{equation*}
|v_i'|=\frac{|h_i|}{|W''(A+\theta v_i')|}\leq \frac{|h_i|}{\kappa_1}.
\end{equation*}
We define an admissible sequence $\tilde{v_i}$ as follows
\begin{equation*}
\tilde{v}_1=y-A,\quad \tilde{v}_N=0,\quad \tilde{v}'_i=v_i'+C_N,
\end{equation*}
for some $C_N$. Since $\{v_i'\}\in l^1$, we have $\sum_{i=2}^N v_i'\to a$  for some $a\in \mathbb{R}$. By summing up the above equalities, it follows that
\begin{equation*}
|C_N|\lesssim \frac{|y-A|+|a|}{N}.
\end{equation*}
Since $v_i'$ minimizes $\Theta_i$ we have
\begin{equation*}
0\leq \Theta_i(\tilde{v}_i')-\Theta_i(v_i)\lesssim C_N^2\lesssim N^{-2}.
\end{equation*}
As a consequence, we obtain
\begin{align}
\inf\limits_{\substack{w\in\R^N\\w_1=y-A,w_N=0}} J_N(A;w)&\leq J_N(A;\tilde{v})\nonumber
\\&=J_N(A;v)+\sum\limits_{i=2}^N[\Theta_i(\tilde{v}'_i)-\Theta_i(v_i')]\nonumber
\\&\leq J_\infty(A;v)+CN^{-1}+\sum\limits_{i=N+1}^\infty\Theta_i(v_i')\nonumber
\\&\leq J_\infty(A;v)+CN^{-1}+C\sum\limits_{i=N+1}^\infty |h_i|^2.\label{pre-limit 2}
\end{align}
Note that in the estimation above we have used the fact that $|\Theta_i(v_i')|\leq C(|h_i|^2+|v_i'|^2)|\leq C|h_i|^2$.

On the other hand, using again the fact that $v_i'$ minimizes $\Theta_i$ for each $i=2,\ldots,N$, we have
\begin{equation}
\label{pre-limit 3}
\inf\limits_{\substack{w\in\R^N\\w_1=y-A,w_N=0}} J_N(A;w)\geq J_N(A;v)=J_\infty(A;v)-\sum\limits_{i=N+1}^\infty \Theta(v_i')\geq J_\infty(A;v)-C\sum\limits_{i=N}^\infty |h_i|^2.
\end{equation}
From \eqref{pre-limit 2} and \eqref{pre-limit 3}, we obtain
\begin{equation}
\label{pre-limit 4}
\Big|\inf\limits_{\substack{v\in\R^N\\v_1=y-A,v_N=0}} J_N(A;v)-\inf\limits_{\substack{v\in\R^N\\v_1=y-A}}J_\infty(A;v)\Big|\lesssim N^{-1}+\sum\limits_{i=N+1}^\infty |h_i|^2,
\end{equation}
from which \eqref{pre-limit 1.2} follows. Finally, from \eqref{pre-limit 1} and \eqref{pre-limit 4}, we get
\begin{equation*}
|E_N^{\rm cg}(A,y)-\lim\limits_{N\to\infty}E_N^{\rm cg}(A,y)|\lesssim N^{-1}+A\,\bigg|\sum\limits_{i=N+1}^\infty h_i\bigg|+\sum\limits_{i=N+1}^\infty |h_i|^2,
\end{equation*}
which is \eqref{eq: rate ENcg to Ecg} (and hence \eqref{eq: coarse-grained energy}) as claimed. This finishes the proof of Theorem \ref{theo: coarse-grained energy}.
\end{proof}
\subsection{Thermodynamic limit}
The main result of this section is the following theorem on the  representation of the defect formation free energy.
\begin{theorem}
\label{theo: thermodynamic limit with forces}
The thermodynamic limit is given by
\begin{equation}
\label{eq: thermodynamic limit with forces}
G_\infty(A)=-\log\frac{\int_{\R}\exp[-(\psi_1+P)(y)-E^{\rm cg}(A,y)]\,dy}{\int_{\R}\exp[-\psi(y)-E^{\rm cg}_{\mathbf{h}=0}(A,y)]\,dy}.
\end{equation}
where $E^{\rm cg}(A,y)$ is defined in \eqref{eq: coarse-grained energy}. 
\end{theorem}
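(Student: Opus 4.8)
The plan is to reproduce, for the force-dependent model, the three-step architecture of the force-free proof (Propositions \ref{prop: difference of energy}, \ref{prop: limits of difference of G} and \ref{prop: limit of ratio of densities}), now feeding the tilted bond energies into the auxiliary Lemma \ref{lem: aulem 1}. I apply that lemma once to the perfect system with $\tilde\psi_i = \psi$ for all $i$ (no forces, so its Cauchy--Born function is just $W$), and once to the defect system with $\tilde\psi_1 = \psi + P + h_1 y$ and $\tilde\psi_i = \psi + h_i y$ for $i \geq 2$. Writing $W_N^P$ for the force-dependent analogue of \eqref{W_N^P(A)} and $g_{N,A}, g^P_{N,A}$ for the corresponding densities at $0$, subtracting the two instances of \eqref{eq: auxilary eqn} gives the identity $G_N(A) = N[W_N^P(A) - W(A)] + \log\big(g_{N,A}(0)/g^P_{N,A}(0)\big)$, exactly as in Proposition \ref{prop: difference of energy}.

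I then argue that the density-ratio term vanishes in the limit. The forces enter each single-bond measure only through a linear tilt, so $\tilde\mu_i^{\sigma^*} \propto \exp\big((\sigma^* - h_i) y - \psi(y)\big)$ coincides with the measure $\mu^{\sigma^*-h_i}$ of the force-free analysis; its variance, its spectral gap, and the decay of its characteristic function in Lemma \ref{lem: estimate of lambda} are unaffected and remain uniform over the bounded range of shifts $\sigma_P^N - h_i$. Rerunning the Fourier-splitting argument of Proposition \ref{prop: limit of ratio of densities}, the only change is that the per-bond cumulant differences are now controlled by $|\sigma_P^N - \sigma_0| + |h_i| \lesssim N^{-1} + |h_i|$; summing these and invoking $\mathbf h \in \ell^1 \cap \ell^2$ (Assumption \ref{assum: assumption on asssumption forces}) yields $\big|\log (g^P_{N,A}(0)/g_{N,A}(0))\big| \to 0$, while the high-frequency tail is controlled exponentially as before.

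The substantive step is the limit of $N[W_N^P(A) - W(A)]$, which I expect to be the main obstacle. Setting $\varphi(\sigma) := \log\int_\R e^{-\psi(y) + \sigma y}\,dy$ and $\varphi_P(\sigma) := \log\int_\R e^{-(\psi+P)(y) + \sigma y}\,dy$, I rewrite
\[
  W_N^P(A) = \sup_{\sigma}\Big\{ \sigma A - \varphi(\sigma) + \tfrac{1}{N} R_N(\sigma) \Big\},
  \qquad
  R_N(\sigma) = \varphi(\sigma) - \varphi_P(\sigma - h_1) - \sum_{i=2}^N \big[ \varphi(\sigma - h_i) - \varphi(\sigma) \big].
\]
The leading term $\sigma A - \varphi(\sigma)$ is maximised at $\sigma_0 = W'(A)$ with vanishing derivative, so the perturbation-of-maximiser estimate behind Lemma \ref{lem:sigma_P-sigma_0} and Proposition \ref{prop: limits of difference of G} shows that moving the maximiser to $\sigma_P^N = \sigma_0 + O(N^{-1})$ changes the objective only at order $N^{-2}$; hence $N[W_N^P(A) - W(A)] \to R_\infty(\sigma_0) := \varphi(\sigma_0) - \varphi_P(\sigma_0 - h_1) - \sum_{i=2}^\infty[\varphi(\sigma_0 - h_i) - \varphi(\sigma_0)]$. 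The delicate bookkeeping here is the simultaneous control of the shifting maximiser and the tail of the sum: convergence of the series uses $\varphi(\sigma_0 - h_i) - \varphi(\sigma_0) = -A h_i + O(h_i^2)$ (recall $\varphi'(\sigma_0) = \Psi(\sigma_0) = A$), so that the $\ell^1$ part contributes $-AH$ and the quadratic remainder is summable by $\mathbf h \in \ell^2$; the truncation errors here are what degrade the rate relative to the force-free case, picking up the $\sum_{i>N}|h_i|$-type tails already seen in Theorem \ref{theo: coarse-grained energy}.

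Finally I identify $R_\infty(\sigma_0)$ with the claimed right-hand side. Using the Legendre duality $\varphi(\tau) = \sup_x[\tau x - W(x)]$ and $W(A) = \sigma_0 A - \varphi(\sigma_0)$, a direct computation gives the per-bond minimum $\min_z \Theta_i(A,z) = \varphi(\sigma_0) - \varphi(\sigma_0 - h_i) - A h_i$, where $\Theta_i(A,z) = W(A+z) - W(A) - W'(A)z + h_i z$. Since the increments in $J_\infty$ decouple under the single constraint $v_1 = 0$, Theorem \ref{theo: coarse-grained energy} identifies $\mathcal J(A) := \inf_{v_1 = 0} J_\infty(A;v) = \sum_{i=2}^\infty \min_z \Theta_i(A,z)$, whence $AH + \mathcal J(A) = -\sum_{i=2}^\infty[\varphi(\sigma_0 - h_i) - \varphi(\sigma_0)]$ and therefore $R_\infty(\sigma_0) = [\varphi(\sigma_0) - \varphi_P(\sigma_0 - h_1)] + AH + \mathcal J(A)$. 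Factoring the $y$-independent constants $e^{-A\sigma_0 - AH - \mathcal J(A)}$ and $e^{-A\sigma_0}$ out of the two integrals in \eqref{eq: thermodynamic limit with forces} (and using $E^{\rm cg}_{\mathbf h = 0}(A,y) = (A-y)W'(A)$ from Theorem \ref{theo: CG without forces}) collapses the right-hand side to exactly $R_\infty(\sigma_0)$, giving $G_\infty(A) = R_\infty(\sigma_0)$ and completing the proof.
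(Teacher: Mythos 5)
Your proposal is correct, and it shares the paper's overall three-step architecture (the identity from Lemma \ref{lem: aulem 1}, the limit of $N[W_N^P(A)-W(A)]$, and the vanishing of the density ratio), but it executes the central step quite differently. The paper never writes down the limiting series explicitly: it solves the Euler--Lagrange equations of the \emph{finite-$N$} coarse-grained problem to get a Lagrange multiplier $\lambda_N$, expresses both $N[W_N^P(A)-W(A)]$ and $E_N^{\rm cg}$ through the function $b_N(x)=\sum_{i=2}^N[xA-W^*(-h_i+x)]$ evaluated at the two nearby points $\sigma_N$ and $\lambda_N$, and controls $|b_N(\sigma_N)-b_N(\lambda_N)|\lesssim N^{-1}$ by a mean-value argument; the formula \eqref{eq: thermodynamic limit with forces} then follows by combining this with Theorem \ref{theo: coarse-grained energy}. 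You instead pass directly to the limit of the sandwich $R_N(\sigma_0)\le N[W_N^P(A)-W(A)]\le R_N(\sigma_P^N)$ (the uniform-in-$N$ Lipschitz bound on $R_N$ being exactly where $\mathbf h\in\ell^1$ enters, since $\sum_i|\Psi(\sigma-h_i)-\Psi(\sigma)|\le\kappa_1^{-1}\|\mathbf h\|_{\ell^1}$), obtain the closed-form limit $R_\infty(\sigma_0)$, and only afterwards identify it with the coarse-grained representation via the per-bond Legendre identity $\min_z\Theta_i(A,z)=\varphi(\sigma_0)-\varphi(\sigma_0-h_i)-Ah_i$ and the decoupling of $\inf_{v_1=0}J_\infty$; I checked this identity and the resulting cancellation against \eqref{eq: coarse-grained energy}, and they are correct. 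Your route is arguably cleaner for establishing the limit formula itself, while the paper's detour through $\lambda_N$ has the advantage of producing a quantitative finite-$N$ comparison between $N[W_N^P(A)-W(A)]$ and $E_N^{\rm cg}$, which is what feeds the convergence rate in Theorem \ref{th:intro:cg-approx}. Your treatment of the density-ratio term (per-bond variance discrepancies of size $N^{-1}+|h_i|$, summable by $\ell^1$, fed into the Fourier-splitting argument of Proposition \ref{prop: limit of ratio of densities}) is sound and is in fact no less detailed than the paper's own sketch of that step.
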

\begin{proof}[Proof of Theorem \ref{theo: thermodynamic limit with forces}]
The proof is analogous to that of Theorem \ref{theo: main theorem constrained case}  which consists of three main steps.
\begin{enumerate}[Step 1)]
\item Express the defect-formation free energy in terms of the energy difference and a ratio of the densities of random variables based on Lemma \ref{lem: aulem 1}.
\item Establish the limit of the energy difference.
\item Show that the ratio of the densities of random variables are of order $O(1/N)$.
\end{enumerate}
We now only sketch out the main computations in Step 1) and Step 2). Applying Lemma \ref{lem: aulem 1} for the case $\tilde{\psi}_1=\psi_1+P,\tilde{\psi}_2=\psi_i$, for $i=2,\ldots, N$ to obtain
\begin{align*}
W_N^P(A)&=\sup_{\sigma\in A}\Big\{\sigma A-\frac{1}{N}\int_{\R}\exp[-(\psi_1(y)+P(y)+\sigma y]\,dy-\frac{1}{N}\sum_{i=2}^N\exp[-\psi_i(y)+\sigma y]\,dy\Big\},
\\&=\sigma_N A-\frac{1}{N}\int_{\R}\exp[-(\psi_1(y)+P(y)+\sigma_N y]\,dy-\frac{1}{N}\sum_{i=2}^N\exp[-\psi_i(y)+\sigma_N y]\,dy.
\end{align*}
The optimal value $\sigma_N$ solves
\begin{align}
\label{eq: eqn for sigmaN case 2}
A&=\frac{1}{N}\frac{\int_{\R}y\exp[-(\psi_1(y)+P(y))+\sigma y]\,dy}{\int_{\R}\exp[-(\psi_1(y)+P(y))+\sigma y]\,dy}+\frac{1}{N}\sum_{i=2}^N\frac{\int_{\R}y\exp[-\psi_i(y)+\sigma y]\,dy}{\int_{\R}\exp[-\psi_i(y)+\sigma y]\,dy} \nonumber
\\&=\frac{1}{N}\Psi_P(\sigma-h_1)+\frac{1}{N}\sum_{i=2}^N\Psi(\sigma-h_i),
\end{align}
where $\Psi$ is defined in \eqref{eq: Psi} and $\Psi_P$ is given by
\begin{equation}
\Psi_P(\sigma)=\frac{\int_{\R}y\exp[-(\psi_1(y)+P(y))+\sigma y]\,dy}{\int_{\R}\exp[-(\psi_1(y)+P(y))+\sigma y]\,dy}.
\end{equation}
Since $W(A)$ is unchanged, it is the same as in \eqref{eq: W_N(A) 1}-\eqref{eq: W_N(A) 2}, so that
\begin{align}
N[W_N^P(A)-W(A)]&=N(\sigma_N-\sigma_0)A-\log\frac{\int_{\R}\exp[-(\psi_1(y)+P(y))+\sigma_N y]\,dy}{\int_{\R}\exp[-\psi(y)+\sigma_0 y]\,dy}\nonumber
\\&\qquad-\sum_{i=2}^N\log\frac{\int_{\R}\exp[-\psi_i(y)+\sigma_N y]\,dy}{\int_{\R}\exp[-\psi(y)+\sigma_0 y]\,dy}	\nonumber
\\&=N(\sigma_N-\sigma_0)A-\log\frac{\int_{\R}\exp[-(\psi_1(y)+P(y))+\sigma_N y]\,dy}{\int_{\R}\exp[-\psi(y)+\sigma_0 y]\,dy}\nonumber
\\&\qquad-\sum_{i=2}^N[W^*(-h_i+\sigma_N)-W^*(\sigma_0)],\label{eq: energy difference forces}
\end{align}
where $\sigma_0=W'(A)$.
We will need the following lemma whose proof is postponed after the proof of Theorem \ref{theo: thermodynamic limit with forces}.
\begin{lemma}
\label{lem: sigmaN-sigma0 case 2}
 It holds that
\begin{equation}
|\sigma_N-\sigma_0|\leq \frac{C}{N}.
\end{equation}
\end{lemma}

To proceed, we will compare this free energy difference with the finite-domain coarse-grained energy. Recalling that the latter is defined by (see \eqref{eq: finite-domain CGenergy}),
\begin{align}
\label{finite coarse-grained force type II}
E_N^{\rm cg}(y)&:=\inf\limits_{\substack{u:\{1,N\}\rightarrow \mathbb{R}\\ u(1)=y, u(N)=N A}}\sum_{i=2}^N \Big[W(u_i-u_{i-1})-W(A)+h_i(u_i-u_{i-1})\Big]\nonumber
\\&=A\sum\limits_{i=2}^N h_i+\inf\limits_{\substack{u:\{1,N\}\rightarrow \mathbb{R}\\ u(1)=y, u(N)=N A}}\sum_{i=2}^N \Big[W(u_i-u_{i-1})+h_i(u_i-u_{i-1})-(h_i A+W(A))\Big].
\end{align}
The Euler-Lagrange equation for a minimizer of $E_N^{\rm cg}$ is
\begin{equation*}
-W'(u_{i+1}-u_i)+W'(u_i-u_{i-1})-(h_{i+1}-h_i)=0,
\end{equation*}
which implies that
\begin{equation*}
W'(u_i-u_{i-1})=-h_i+\lambda
\end{equation*}
for $i=2,\ldots, N$ and for some $\lambda\in \R$.
We note that $(W')^{-1}(z)=(W^*)'(z)$, where $W^*$ is the Legendre transformation of $W$. It follows from the definition of $W$ that
\begin{equation*}
W^*(x)=\log\int \exp[-\psi(z)+x z]\,dz,
\end{equation*}
and so
\begin{equation*}
(W^*)'(x)=\frac{\int x \exp[-\psi(z)+x z]\,dz}{\int \exp[-\psi(z)+x z]\,dz}=\Psi(x).
\end{equation*}
Therefore, we obtain that
\begin{equation*}
u_i-u_{i-1}=(W')^{-1}(-h_i+\lambda)=(W^*)'(-h_i+\lambda)=\Psi(-h_i+\lambda).
\end{equation*}
Summing up these equalities from $i=2$ to $N$ and using the boundary condition on $u$, we obtain the following equation for $\lambda=\lambda_N$
\begin{equation}
\label{eq: eqn for lambda}
NA-y=\sum_{i=2}^N \Psi(-h_i+\lambda_N).
\end{equation}
Next, we use the following relations of the Legendre transform
\begin{equation*}
W(x)=W'(x)x-W^*(W'(x)), \qquad W'((W^*)'(x))=x
\end{equation*}
to obtain $W(A)=W'(A)A-W^*(W'(A))$ and
\begin{align*}
W(u_i-u_{i-1})&=W((W^*)'(-h_i+\lambda_N))
\\&=W'((W^*)'(-h_i+\lambda_N))(W^*)'(-h_i+\lambda_N)-W^*(W'((W^*)'(-h_i+\lambda_N)))
\\&=(-h_i+\lambda_N)(W^*)'(-h_i+\lambda_N)-W^*(-h_i+\lambda_N).
\end{align*}
Therefore, the sum inside the inf in \eqref{finite coarse-grained force type II} can be re-written as (recalling that $u_N=NA, u_1=y$)
\begin{align*}
&\sum\limits_{i=2}^N\Big[W(u_i-u_{i-1})+h_i(u_i-u_{i-1})-h_i A-W(A)\Big]
\\&\quad=\sum\limits_{i=2}^N\Big[(-h_i+\lambda_N)(W^*)'(-h_i+\lambda_N)-W^*(-h_i+\lambda_N)+h_i(W^*)'(-h_i+\lambda_N)
\\&\hspace*{3cm}-h_i A-W'(A)A+W^*(W'(A))\Big]
\\&\quad=\lambda_N\sum\limits_{i=2}^N(W^*)'(-h_i+\lambda_N)-\sum\limits_{i=2}^N\Big[W^*(-h_i+\lambda_N)-W^*(W'(A))+h_i A+W'(A)A\Big]
\\&\quad=\lambda_N\sum\limits_{i=2}^N(u_i-u_{i-1})-\sum\limits_{i=2}^N\Big[W^*(-h_i+\lambda_N)-W^*(W'(A))+h_i A+W'(A)A\Big]
\\&\quad=\lambda_N(NA-y)-A\sum_{i=1}^Nh_i-(N-1)W'(A)A-\sum\limits_{i=2}^N\Big[W^*(-h_i+\lambda_N)-W^*(W'(A))\Big].
\end{align*}
Substituting this expression back into \eqref{finite coarse-grained force type II}, we get
\begin{align}
E_N^{\rm cg}(y)&=\lambda_N(NA-y)-(N-1)W'(A)A-\sum_{i=2}^N[W^*(-h_i+\lambda_N)-W^*(W'(A))]\nonumber
\\&=\lambda_N(A-y)+(N-1)(\lambda_N-W'(A))A-\sum_{i=2}^N[W^*(-h_i+\lambda_N)-W^*(W'(A))].\label{eq: finite-domain CG 2}
\end{align}
It follows from \eqref{eq: energy difference forces} and \eqref{eq: finite-domain CG 2} that
\begin{align}
\label{eq: W_N-Ecg_N}
N[W_N(A)-W(A)]-E_N^{\rm cg}(A)&=(\sigma_N-\sigma_0)A-\log\frac{\int_{\R}\exp[-(\psi_1(y)+P(y))+\sigma_N y]\,dy}{\int_{\R}\exp[-\psi(y)+\sigma_0 y]\,dy}\nonumber
\\&\qquad+\sum_{i=2}^N\left([\sigma_N A-W^*(-h_i+\sigma_N)]-[\lambda_N A- W^*(-h_i+\lambda_N)]\right)\nonumber
\\&=(\sigma_N-\sigma_0)A-\log\frac{\int_{\R}\exp[-(\psi_1(y)+P(y))+\sigma_N y]\,dy}{\int_{\R}\exp[-\psi(y)+\sigma_0 y]\,dy}\nonumber
\\&\qquad+b_N(\sigma_N)-b_N(\lambda_N),
\end{align}
where
\begin{equation*}
b_{N}(x):=\sum_{i=2}^N[x A-W^*(-h_i+x)].
\end{equation*}
Then we have
\begin{align*}
&b_{N}'(x)=(N-1)A-\sum_{i=2}^N (W^*)'(-h_i+x),
\\&b_{N}''(x)=-\sum_{i=2}^N (W^*)''(-h_i+x)=-\sum_{i=2}^N \Psi'(-h_i+x)\leq 0,
\end{align*}
where we have used \eqref{eq: derivative of Psi} to obtain the last inequality.
Therefore $b_N'(x)$ is a non-increasing function. Furthermore, from \eqref{eq: eqn for lambda} and \eqref{eq: eqn for sigmaN case 2}, we have
\begin{align*}
&b_N'(\lambda_N)=(N-1)A-\sum_{i=2}^N (W^*)'(-h_i+\lambda_N)=y-A,\\
& b_N'(\sigma_N)=(N-1)A-\sum_{i=2}^N (W^*)'(-h_i+\sigma_N)=\Psi_P(\sigma_N-h_1)-A.
\end{align*}
Since $\frac{d}{d\sigma}\Psi_P(\sigma)\leq \frac{1}{\kappa_1+\varsigma_1}$, we have
\begin{align*}
|\Psi_P(\sigma_N-h_1)|&\leq |\Psi_P(0)|+\frac{1}{\kappa_1+\varsigma_1}|\sigma_N-h_1|\leq |\Psi_P(0)|+\frac{1}{\kappa_1+\varsigma_1}(|\sigma_0-h_1|+|\sigma_N-\sigma_0|)
\\&\leq\bigg(|\Psi_P(0)|+\frac{1}{\kappa_1+\varsigma_1}(|\sigma_0-h_1|+C)\bigg). 
\end{align*}
Therefore both $b_N'(\lambda_N)$ and $b_N'(\sigma_N)$ are uniformly bounded. It follows that
\begin{align*}
|b_N(\sigma_N)-b_N(\lambda_N)|&=|\sigma_N-\lambda_N| |b_N'(\theta_N)|
\\&\leq |\sigma_N-\lambda_N| \max\{|b_N'(\sigma_N),b_N'(\lambda_N)|\}
\\&\leq C |\sigma_N-\lambda_N|
\\&\leq C [|\sigma_N-W'(A)|+|\lambda_N-W'(A)|]
\\&\leq C (N-1)^{-1}.
\end{align*}
Substituting this estimate into \eqref{eq: W_N-Ecg_N}, we obtain
\begin{equation}
\label{eq: pre-theorem 1}
\left|N[W_N(A)-W(A)]-\bigg(E_N^{\rm cg}(A)+(\sigma_N-\sigma_0)A-\log\frac{\int_{\R}\exp[-(\psi_1(y)+P(y))+\sigma_N y]\,dy}{\int_{\R}\exp[-\psi(y)+\sigma_0 y]\,dy}\bigg)\right|\leq\frac{C}{N}.
\end{equation}
An analogous argument as in the proof of Proposition \ref{prop: limits of difference of G} we obtain 
\begin{equation}
\label{eq: pre-theorem 2}
\left|\log\frac{\int_{\R}\exp[-(\psi_1(y)+P(y))+\sigma_N y]\,dy}{\int_{\R}\exp[-\psi(y)+\sigma_0 y]\,dy}-\log\frac{\int_{\R}\exp[-(\psi_1(y)+P(y))+\sigma_0 y]\,dy}{\int_{\R}\exp[-\psi(y)+\sigma_0 y]\,dy}\right|\leq \frac{C}{N}.
\end{equation}
The assertion \eqref{eq: thermodynamic limit with forces} of Theorem \ref{theo: thermodynamic limit with forces} is then followed from \eqref{eq: pre-theorem 1}, Theorem \ref{theo: coarse-grained energy}, Lemma \ref{lem: sigmaN-sigma0 case 2} and \eqref{eq: pre-theorem 2}.
\end{proof}
We now prove Lemma \ref{lem: sigmaN-sigma0 case 2}.
\begin{proof}[Proof of Lemma \ref{lem: sigmaN-sigma0 case 2}]
Define $L(\sigma):=\frac{1}{N}\Psi_P(\sigma_N)+\frac{1}{N}\sum_{i=2}^N\Psi(\sigma_N-h_i)$. Then we have
\begin{equation*}
A=\Psi(\sigma_0)=L(\sigma_N).
\end{equation*}
Hence,
\begin{align*}
L(\sigma_N)-L(\sigma_0)=\Psi(\sigma_0)-L(\sigma_0)=\frac{1}{N}(\Psi(\sigma_0)-\Psi_P(\sigma_0-h_1))+\frac{1}{N}\sum_{i=2}^N(\Psi(\sigma_0)-\Psi(\sigma_0-h_i)).
\end{align*}
By the mean value theorem, there exists $\theta$ such that
\begin{equation}
L(\sigma_N)-L(\sigma_0)=L'(\theta)(\sigma_N-\sigma_0).
\end{equation}
We have
\begin{align*}
|L'(\theta)||\sigma_N-\sigma_0|=|L(\sigma_N)-L(\sigma_0)|&\leq \frac{1}{N}\left[|\Psi(\sigma_0)-\Psi_P(\sigma_0-h_1)|+\sum_{i=2}^N|\Psi(\sigma_0)-\Psi(\sigma_0-h_i)|\right]
\\&\leq\frac{1}{N}\left[|\Psi(\sigma_0)-\Psi_P(\sigma_0-h_1)|+\frac{1}{\kappa_1}\sum_{i=2}^N|h_i|\right].
\end{align*}
Since $0<|L'(\theta)|\leq C$, it implies that
\begin{equation*}
|\sigma_N-\sigma_0|\leq \frac{1}{N |L'(\theta)|}\left[|\Psi(\sigma_0)-\Psi_P(\sigma_0-h_1)|+\frac{1}{\kappa_1}\sum_{i=2}^N|h_i|\right]\leq \frac{C}{N}.
\end{equation*}
\end{proof}
\section{Harmonic potentials}
\label{sec: appendix}
In this section, we provide explicit computations for the quadratic case, 
\begin{equation}
\label{quadratic case}
\psi(y)=\alpha |y|^2, \qquad P(y)=\beta |y|^2,\quad \text{for some}~~\alpha,\beta>0.
\end{equation}

\subsection{Harmonic potentials without forcing}
\label{sec: hamonic no-force}
We recall that
\begin{align*}
F_N(A)=-\log\int_{\R^{N-1}}\exp\bigg[-\alpha\sum_{i=1}^{N-1}y_i^2-\alpha\Big(N A-\sum_{i=1}^{N-1}y_i\Big)^2\bigg]\,dy_1\ldots dy_{N-1}.
\end{align*}
and
\begin{align*}
F_N^P(A)=-\log\int_{\R^{N-1}}\exp\bigg[-(\alpha+\beta)\,y_1^2-\alpha\sum_{i=2}^{N-1}y_i^2-\alpha\Big(NA-\sum_{i=1}^{N-1}y_i\Big)^2\bigg]\,dy_1\ldots dy_{N-1}.
\end{align*}
The main result of the present section is the following.
\begin{theorem}
The defect-formation free energy is given by
\begin{align*}
G_N(A)&:=F^P_N(A)-F_N(A)\\
&=\frac{1}{2}\log\frac{\alpha+\beta}{\alpha}+\frac{\alpha\beta A^2}{\alpha+\beta}-\frac{N\alpha\beta^2 A^2}{(N(\alpha+\beta)-\beta)^2}+\frac{\alpha\beta A^2}{\alpha+\beta}\left(\frac{2\beta}{N(\alpha+\beta)-\beta}+\frac{\beta^2}{(N(\alpha+\beta)-\beta)^2}\right)\nonumber
\\&\qquad+\frac{1}{2}\log\Big(1-\frac{\beta}{N(\alpha+\beta)}\Big).
\end{align*}
The thermodynamic limit is given by
\begin{equation*}
G_\infty(A):=\lim_{N\rightarrow \infty}G_N(A)=\frac{\alpha\beta\,A^2}{\alpha+\beta}+\frac{1}{2}\log\frac{\alpha+\beta}{\alpha}.
\end{equation*}
Moreover, the following error estimate holds for all $A\in\R$ and $N\geq 2$ and for some positive constant $C$
\begin{equation*}
\left|G_N(A)-G_\infty(A)\right|\leq \frac{C}{N}.
\end{equation*}
\end{theorem}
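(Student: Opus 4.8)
The plan is to exploit the fact that for the harmonic choice $\psi(y)=\alpha|y|^2$, $P(y)=\beta|y|^2$ all the integrals are Gaussian, so $F_N$ and $F_N^P$ can be evaluated in closed form and $G_N$ computed exactly. Starting from the bond-variable forms already recorded in the excerpt, I read off the quadratic exponent $-y^\top M y + b^\top y - c$ in $y=(y_1,\dots,y_{N-1})$. Expanding $\alpha\big(NA-\sum_i y_i\big)^2$, the perfect material gives $M=\alpha(I+\mathbf{1}\mathbf{1}^\top)$, $b=2\alpha N A\,\mathbf{1}$ and $c=\alpha N^2A^2$; the defect material differs only in the first diagonal stiffness, so $M^P=D_0+\alpha\mathbf{1}\mathbf{1}^\top$ with $D_0=\mathrm{diag}(\alpha+\beta,\alpha,\dots,\alpha)$, while $b$ and $c$ are unchanged.

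Both matrices are diagonal-plus-rank-one, so I would evaluate $\det M$ by the matrix-determinant lemma and $\mathbf{1}^\top M^{-1}\mathbf{1}$ (which is all that enters $b^\top M^{-1}b=(2\alpha NA)^2\,\mathbf{1}^\top M^{-1}\mathbf{1}$) by the Sherman--Morrison formula. This yields $\det M=\alpha^{N-1}N$ and, writing $\Delta:=N(\alpha+\beta)-\beta$, $\det M^P=\alpha^{N-2}\Delta$. Substituting into the Gaussian identity $\int_{\R^{n}}e^{-y^\top M y+b^\top y-c}\,dy=\pi^{n/2}(\det M)^{-1/2}\exp\!\big(\tfrac14 b^\top M^{-1}b-c\big)$ and taking $-\log$ produces closed forms for $F_N(A)$ and $F_N^P(A)$; in both the exponent simplifies dramatically, the perfect one collapsing to $-\alpha N A^2$ and the defect one to $-\alpha N^2A^2(\alpha+\beta)/\Delta$.

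Subtracting, the $\pi$-powers and the bulk $\alpha^N$-type prefactors cancel and, after the identity $N(\alpha+\beta)-\Delta=\beta$, one is left with the compact expression
\begin{equation*}
G_N(A)=\frac{\alpha\beta N A^2}{\Delta}+\tfrac12\log\frac{\Delta}{\alpha N}.
\end{equation*}
Using $\dfrac{\Delta}{\alpha N}=\dfrac{\alpha+\beta}{\alpha}\Big(1-\dfrac{\beta}{N(\alpha+\beta)}\Big)$ recovers the two logarithmic terms of the theorem, and expanding $\dfrac{\alpha\beta NA^2}{\Delta}$ via $\big(1+\beta/\Delta\big)^2$ reproduces the displayed $A^2$-terms. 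The limit and rate are then immediate: letting $N\to\infty$ gives $G_\infty(A)=\dfrac{\alpha\beta A^2}{\alpha+\beta}+\tfrac12\log\dfrac{\alpha+\beta}{\alpha}$, and the difference splits as $G_N(A)-G_\infty(A)=\dfrac{\alpha\beta^2 A^2}{(\alpha+\beta)\Delta}+\tfrac12\log\!\big(1-\tfrac{\beta}{N(\alpha+\beta)}\big)$, each term being $O(N^{-1})$ once one bounds $\Delta\ge\tfrac12 N(\alpha+\beta)$ for $N\ge2$ and uses $|\log(1-x)|\le 2|x|$ for small $x$; the resulting constant $C$ depends on $A,\alpha,\beta$.

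The computation is entirely elementary. The only genuine bookkeeping is the Sherman--Morrison step for $M^P$, where the exceptional first diagonal entry breaks the clean symmetry of the perfect case and forces one to track $s=\mathbf{1}^\top D_0^{-1}\mathbf{1}=\tfrac{1}{\alpha+\beta}+\tfrac{N-2}{\alpha}$. I expect the only (minor) obstacle to be matching the compact exact formula to the deliberately expanded form stated in the theorem, i.e.\ checking that the $A^2$-coefficient $\tfrac{1}{\alpha+\beta}\big(1+\beta/\Delta\big)^2-N\beta/\Delta^2$ collapses to $N/\Delta$; this is a one-line simplification once one observes $\Delta+\beta=N(\alpha+\beta)$.
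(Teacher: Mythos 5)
Your proposal is correct, and I verified the key computations: $\det M=\alpha^{N-1}N$, $\det M^P=\alpha^{N-2}\Delta$ with $\Delta=N(\alpha+\beta)-\beta$, the exponents $-\alpha NA^2$ and $-\alpha N^2A^2(\alpha+\beta)/\Delta$, the compact form $G_N(A)=\frac{\alpha\beta NA^2}{\Delta}+\frac12\log\frac{\Delta}{\alpha N}$, and its agreement with the expanded expression in the theorem (the logarithms combine since $\frac{\Delta}{\alpha N}=\frac{\alpha+\beta}{\alpha}\big(1-\frac{\beta}{N(\alpha+\beta)}\big)$, and the $A^2$-coefficient collapses via $\Delta+\beta=N(\alpha+\beta)$ exactly as you say). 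However, your route is genuinely different from the paper's. The paper does not integrate the Gaussians directly; it instead instantiates its general machinery: it uses the decomposition $G_N(A)=N[W_N^P(A)-W(A)]+\log\frac{g_{N,A}(0)}{g^P_{N,A}(0)}$ from Proposition \ref{prop: difference of energy}, computes the Lagrange multipliers $\sigma_0=2\alpha A$ and $\sigma_P=2\alpha A\big(1+\frac{\beta}{N(\alpha+\beta)-\beta}\big)$ from the stationarity conditions, evaluates $N[W_N^P(A)-W(A)]$ through the one-dimensional moment-generating integrals, and obtains the density ratio $\log\frac{g^P_{N,A}(0)}{g_{N,A}(0)}=\frac12\log\frac{1}{1-\beta/(N(\alpha+\beta))}$ by explicit Fourier transforms of the Gaussian bond measures. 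Your Sherman--Morrison/matrix-determinant-lemma computation is more elementary and more compact, and it delivers the closed form in one stroke; what it does not deliver is the term-by-term illustration of the general decomposition, which is the stated purpose of that section (demonstrating the sharpness of each ingredient, the $W$-difference and the density ratio, separately). One small point of care: as you note, the constant $C$ in the error bound necessarily depends on $A$ (through $A^2$), which is consistent with the paper's pointwise-in-$A$ formulation but worth stating explicitly.
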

\begin{proof}
The computations are lengthy but elementary. The following integrals will be used in the sequel
\begin{subequations}
\begin{align}
&\int_{\R}\exp(-a|y|^2+by)\,dy=\exp(\frac{b^2}{4a})\,\sqrt{\frac{\pi}{a}}\label{eq1},
\\&\int_{\R}y\exp(-a|y|^2+by)\,dy=\exp(\frac{b^2}{4a})\,\frac{b}{2a}\,\sqrt{\frac{\pi}{a}}\label{eq2},
\\&\int_{\R}y^2\exp(-a|y|^2+by)\,dy=\exp(\frac{b^2}{4a})\,\sqrt{\frac{\pi}{a}}\,\left(\frac{b^2}{4a^2}+\frac{1}{2a}\right)\label{eq3}.
\end{align}
\end{subequations}

From \eqref{sigma0}, we have
\begin{equation*}
A=\frac{\int_{\R} y\exp(-\alpha|y|^2+\sigma_0^* y)\,dy}{\int_{\R} \exp(-\alpha|y|^2+\sigma_0^* y)\,dy}=\frac{\sigma_0}{2\alpha},
\end{equation*}
which implies that
\begin{equation}
\label{sigma0 quadratic}
\sigma_0=2\alpha A.
\end{equation}
Similarly, from \eqref{sigmaP}, we have
\begin{align*}
A&=\frac{\int_{\R} y\exp(-\alpha |y|^2+\sigma_P y)\,dy}{\int_{\R} \exp(-\alpha |y|^2+\sigma_P y)\,dy}+\frac{1}{N}\left[\frac{\int_{\R} y\exp[-(\alpha+\beta)|y|^2+\sigma_P y]\,dy}{\int_{\R} \exp[-(\alpha+\beta)|y|^2+\sigma_P y)\,dy}-\frac{\int_{\R} y\exp(-\alpha |y|^2+\sigma_P y)\,dy}{\int_{\R} \exp(-\alpha |y|^2+\sigma_P y)\,dy}\right]
\\&=\frac{\sigma_P}{2\alpha}+\frac{1}{N}\left(\frac{\sigma_P}{2(\alpha+\beta)}-\frac{\sigma_P}{2\alpha}\right),
\end{align*}
which leads to
\begin{equation}
\label{sigmaP quadraic}
\sigma_P=2\alpha A\left(1+\frac{\beta}{\alpha+\beta-\frac{\beta}{N}}\frac{1}{N}\right).
\end{equation}
Therefore, we obtain
\begin{align*}
N[W_N^P(A)-W(A)]&=N(\sigma_P-\sigma_0)x-N\log\frac{\int_{\R}\exp[-\psi(y)+\sigma_P y]\,dy}{\int_{\R}\exp[\psi(y)+\sigma_0y]\,dy}
\\&\qquad -\log\frac{\int_{\R}\exp(-(\psi+P)(y)+\sigma_P y)\,dy}{\int_{\R}\exp[\psi(y)+\sigma_P y]\,dy}
\\&=N\frac{2\alpha\beta A}{\alpha+\beta-\beta/N}\frac{1}{N}A-N\log\frac{\exp\big[\frac{\sigma_P^2}{4\alpha}\big]\sqrt{\frac{\pi}{\alpha}}}{\exp\big[\frac{\sigma_0^2}{4\alpha}\big]\sqrt{\frac{\pi}{\alpha}}}
\\&\qquad -\log\frac{\exp\big[\frac{\sigma_P^2}{4(\alpha+\beta)}\big]\sqrt{\frac{\pi}{\alpha+\beta}}}{\exp\big[\frac{\sigma_P^2}{4\alpha}\big]\sqrt{\frac{\pi}{\alpha}}}
\\&=\frac{1}{2}\log\frac{\alpha+\beta}{\alpha}+\frac{2\alpha\beta A^2}{\alpha+\beta-\beta/N}-N\frac{\sigma_P^2-\sigma_0^2}{4\alpha}-\left(\frac{\sigma_P^2}{4(\alpha+\beta)}-\frac{\sigma_P^2}{4\alpha}\right)
\\&=\frac{1}{2}\log\frac{\alpha+\beta}{\alpha}+\frac{2\alpha\beta A^2}{\alpha+\beta-\beta/N}-N\frac{4\alpha^2A^2\big(1+\frac{\beta}{N(\alpha+\beta)-\beta}\big)^2-4\alpha^2A^2}{4\alpha}
\\&\qquad-4\alpha^2A^2\left(1+\frac{\beta}{N(\alpha+\beta)-\beta}\right)^2\left(\frac{1}{4(\alpha+\beta)}-\frac{1}{4\alpha}\right)
\\&=\frac{1}{2}\log\frac{\alpha+\beta}{\alpha}+\frac{\alpha\beta A^2}{\alpha+\beta}-\frac{N\alpha\beta^2 A^2}{(N(\alpha+\beta)-\beta)^2}+\frac{\alpha\beta A^2}{\alpha+\beta}\left(\frac{2\beta}{N(\alpha+\beta)-\beta}+\frac{\beta^2}{(N(\alpha+\beta)-\beta)^2}\right)
\\& =\frac{1}{2}\log\frac{\alpha+\beta}{\alpha}+\frac{\alpha\beta A^2}{\alpha+\beta}+O(1/N).
\end{align*}
As a consequence, taking the limit $N\rightarrow \infty$, we achieve
\begin{equation*}
\lim_{N\rightarrow\infty}N[W_N^P(A)-W(A)]=\frac{\alpha\beta\,A^2}{\alpha+\beta}+\log\frac{\alpha+\beta}{\alpha}.
\end{equation*}
We next compute $g_{N,A}(0)$ and $g^P_{N,A}(0)$ using the formula \eqref{f_Nx as Fourier trans }.
\begin{equation}
\label{g_Nx(0)}
g_{N,A}(0)=\frac{1}{2\pi}\int_{\R}\prod_{j=1}^N\int_{\R}\exp\Big(i(y_j-m_j)\frac{1}{\sqrt{N}}\xi\Big)\mu_j^{\sigma_0}(dy_j)\, d\xi,
\end{equation}
where
\[
\mu_j^{\sigma_0}(dy_j)=Z^{-1}\exp(-\alpha\,y_j^2+\sigma_0\,y_j)\,dy_j.
\]
Hence
\begin{equation*}
\int_{\R}\exp\Big(i(y_j-m_j)\frac{1}{\sqrt{N}}\xi\Big)\mu_j^{\sigma_0}(dy_j)=Z^{-1}\int_{\R}e^{-i\frac{1}{\sqrt{N}}\,m_j\,\xi}e^{-\alpha\,y_j^2+(\sigma_0+\frac{i\,\xi}{\sqrt{N}})y_j}\,dy_j
\end{equation*}
According to \eqref{eq1}-\eqref{eq3}, we have
\begin{align*}
&Z=\int_{\R}\exp(-\alpha\,y_j^2+\sigma_0\,y_j)\,dy_j=\exp\big[\frac{\sigma_0^2}{4\alpha}\big]\sqrt{\frac{\pi}{\alpha}},
\\&\int_{\R}e^{-i\frac{1}{\sqrt{N}}\,m_j\,\xi}e^{-\alpha\,y_j^2+(\sigma_0+\frac{i\,\xi}{\sqrt{N}})y_j}\,dy_j=\exp\Big(\frac{(\sigma_0+\frac{i\,\xi}{\sqrt{N}})^2}{4\alpha}\Big)\sqrt{\frac{\pi}{\alpha}}.
\end{align*}
Therefore
\begin{align*}
\int_{\R}\exp\Big(i(y_j-m_j)\frac{1}{\sqrt{N}}\xi\Big)\mu_j^{\sigma_0}(dy_j)&=\exp\left[-\frac{i\,m_j\,\xi}{\sqrt{N}}+\frac{1}{4\alpha}\Big(\big(\sigma_0+\frac{i\,\xi}{\sqrt{N}}\big)^2-\sigma_0^2\Big)\right]
\\&=\exp\left[\frac{i\,\xi}{\sqrt{N}}\big(\frac{\sigma_0}{2\alpha}-m_j\big)-\frac{\xi^2}{4\alpha\, N}\right].
\end{align*}
Since $m_j=\frac{\sigma_0}{2\alpha}$, it follows that
\begin{align*}
\prod_{j=1}^N\int_{\R}\exp\Big(i(y_j-m_j)\frac{1}{\sqrt{N}}\xi\Big)\mu_j^{\sigma_0}(dy_j)&=\prod_{j=1}^N\exp\left[\frac{i\,\xi}{\sqrt{N}}\big(\frac{\sigma_0}{2\alpha}-m_j\big)-\frac{\xi^2}{4\alpha\,N}\right]
\\&=\exp\left[\frac{i\,\xi}{\sqrt{N}}\big(N\frac{\sigma_0}{2\alpha}-\sum_{j=1}^N m_j\big)-\frac{\xi^2}{4\alpha}\right]=\exp(-\frac{\xi^2}{4\alpha}).
\end{align*}
Substituting back to \eqref{g_Nx(0)} we obtain
\begin{equation}
\label{g_Nx(0) final}
g_{N,A}(0)=\frac{1}{2\pi}\int_{\R}\exp(-\frac{\xi^2}{4\alpha})\,d\xi=\frac{1}{2\pi} \times 2\sqrt{\pi\,\alpha}.
\end{equation}
Next we compute $g^P_{N,A}(0)$ using
\begin{equation}
g^P_{N,A}(0)=\frac{1}{2\pi}\int_{\R}\exp\Big(i(y_1-m_{P,1})\frac{1}{\sqrt{N}}\xi\Big)\nu^{\sigma_P}(dy_1)\prod_{j=2}^N\int_{\R}\exp\Big(i(y_j-m_{P,j})\frac{1}{\sqrt{N}}\xi\Big)\mu_j^{\sigma_P}(dy_j)\, d\xi,
\end{equation}
where
\begin{align}
&\nu^{\sigma_P}(dy_1)=Z_{P,1}^{-1}\exp(-(\alpha+\beta)\,y_1^2+\sigma_P\,y_1)\,dy_1,
\\&\mu_j^{\sigma_P}(dy_j)=Z_{P,i}^{-1}\exp(-\alpha y_i^2+\sigma_P)\,dy_i
\end{align}
From \eqref{eq1}, the normalising constants $Z_{P,i}$ are given by
\begin{align*}
&Z_{P,1}=\int_{\R}\exp(-(\alpha+\beta)\,y_1^2+\sigma_P\,y_1)\,dy_1=\exp\Big(\frac{\sigma_P^2}{4(\alpha+\beta)}\Big)\sqrt{\frac{\pi}{\alpha+\beta}},
\\&Z_{P,i}=\int_{\R}\exp(-\alpha y_i^2+\sigma_P)\,dy_i=\exp\left(\frac{\sigma_P^2}{4\alpha}\right)\sqrt{\frac{\pi}{\alpha}}.
\end{align*}
Similarly as above, we find
\begin{align*}
\int_{\R}\exp\Big(i(y_1-m_{P,1})\frac{1}{\sqrt{N}}\xi\Big)\nu^{\sigma_P}(dy_1)&=\exp\left[\frac{i\,\xi}{\sqrt{N}}\big(\frac{\sigma_P}{2(\alpha+\beta)}-m_{P,1}\big)-\frac{\xi^2}{4(\alpha+\beta)\,N}\right]
\\&=\exp\left[-\frac{\xi^2}{4(\alpha+\beta)\,N}\right]
\end{align*}
since
\[
m_{P,1}=\int_{\R}y_1\,\nu^{\sigma_P}(dy_1)=\frac{\sigma_P}{2(\alpha+\beta)}.
\]
Additionally
\begin{align*}
\prod_{j=2}^N\int_{\R}\exp\Big(i(y_j-m_{P,j})\frac{1}{\sqrt{N}}\xi\Big)\mu_j^{\sigma_P}(dy_j)=\exp(-\frac{N-1}{N}\frac{\xi^2}{4\alpha}).
\end{align*}
Therefore, we obtain
\begin{align}
\label{g_NxP final}
g^P_{N,A}(0)&=\frac{1}{2\pi}\int_{\R}\exp\left[-\frac{\xi^2}{4(\alpha+\beta)\,N}-(N-1)\frac{\xi^2}{4\alpha\,N}\right]\,d\xi=\frac{1}{2\pi}\int_{\R}\exp\left[-\frac{\xi^2}{4}\times\frac{1-\frac{\beta}{N(\alpha+\beta)}}{\alpha}\right]\,d\xi\nonumber
\\&=\frac{1}{2\pi}\times 2\sqrt{\pi}\times\sqrt{\frac{\alpha}{1-\frac{\beta}{N(\alpha+\beta)}}}.
\end{align}
From \eqref{g_Nx(0) final} and \eqref{g_NxP final}, we get
\begin{equation}
\log\frac{g^P_{N,A}(0)}{g_{N,A}(0)}=\log\sqrt{\frac{1}{1-\frac{\beta}{N(\alpha+\beta)}}}=\frac{1}{2}\log \frac{1}{1-\frac{\beta}{N(\alpha+\beta)}}.
\end{equation}
Since $0\leq \frac{1}{2}\log\frac{1}{1-z}\leq z$ for $0\leq z\leq \frac{1}{2}$, we have
\[
0\leq \log\frac{g^P_{N,A}(0)}{g_{N,A}(0)}\leq \frac{1}{N}\frac{\beta}{\alpha+\beta}\qquad\text{for}\quad N\geq 2.
\]
\end{proof}

\subsection{Harmonic potentials with external forces}
Now we consider the quadratic case with external forces. Recall that the perfect energy is
\begin{equation}
\label{quadraic F_N(x,0) case 2}
\begin{cases}
F_N(A)=-\beta^{-1}\log \int_{\R^{N-1}}\exp\Big[-\beta\sum_{i=1}^N \psi(u_i-u_{i-1})\Big]\,du_1\ldots du_{N-1}\\
u_0=0, u_{N}=N A.
\end{cases}
\end{equation}
and the deformed energy is
\begin{equation}
\label{quadratic F_N(x,P) case 2}
\begin{cases}
F_N^P(A)=-\beta^{-1}\log \int_{\R^{N-1}}\exp\Big[-\beta\sum_{i=1}^N \psi_i(u_i-u_{i-1})-\beta P(u_1)\Big]\,du_1\ldots du_{N-1}\\
u_0=0, u_{N}=NA,
\end{cases}
\end{equation}
where $\psi_i(y)=\psi(y)+h_i y=\alpha y^2+h_i y$, where $\{h_i\}$ represent the external forces.

In view of Assumption \ref{assum: assumption on asssumption forces} we define
\begin{equation*}
H:=\sum_{i=2}^\infty h_i\quad\text{and}\quad \bar{H}=\sum_{i=2}^\infty h_i^2.
\end{equation*}
The main result of this section is the following.
\begin{theorem} The thermodynamic limit has the following explicit formula 
\begin{equation*}
G_\infty(A)=\frac{1}{2}\log\frac{\alpha+\beta}{\alpha}+\frac{\alpha\beta A^2}{\alpha+\beta}+\frac{\alpha A h_1}{\alpha+\beta}-\frac{h_1^2}{4(\alpha+\beta)}+AH-\frac{1}{4\alpha}\bar{H}.
\end{equation*}
\end{theorem}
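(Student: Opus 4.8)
The plan is to specialise the general representation of $G_\infty(A)$ established in Theorem \ref{theo: thermodynamic limit with forces} to the quadratic data \eqref{quadratic case} and then evaluate every ingredient in closed form. First I would record the harmonic Cauchy--Born quantities: inserting the Gaussian integral \eqref{eq1} into the definition \eqref{eq: Cauchy-Born} gives $W(A)=\alpha A^2-\tfrac12\log(\pi/\alpha)$, hence $W'(A)=2\alpha A$ and $W''(A)=2\alpha$; in particular $\sigma_0=W'(A)=2\alpha A$, consistent with \eqref{sigma0 quadratic}.

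The second step is to make the coarse-grained energies explicit via Theorem \ref{theo: coarse-grained energy}. Because $W$ is quadratic, the bond increments appearing in \eqref{eq: Jinfty} collapse to $W(A+v_i')-W(A)-W'(A)v_i'=\alpha (v_i')^2$, so that $J_\infty(A;v)=\sum_{i=2}^\infty[\alpha(v_i')^2+h_i v_i']$ decouples into independent scalar minimisations. Each is minimised at $v_i'=-h_i/(2\alpha)$ with value $-h_i^2/(4\alpha)$, whence $\inf_{v_1=0}J_\infty(A;v)=-\bar H/(4\alpha)$. Substituting this together with $W'(A)=2\alpha A$ into \eqref{eq: coarse-grained energy} yields $E^{\rm cg}(A,y)=2\alpha A(A-y)+AH-\bar H/(4\alpha)$, and setting $\mathbf{h}=0$ gives $E^{\rm cg}_{\mathbf{h}=0}(A,y)=2\alpha A(A-y)$.

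The final step is the Gaussian evaluation of \eqref{eq: thermodynamic limit with forces}. With $\psi_1+P=(\alpha+\beta)y^2+h_1 y$ and $\psi=\alpha y^2$, the numerator integrand is $\exp[-(\alpha+\beta)y^2+(2\alpha A-h_1)y]$ multiplied by the $y$-independent factor $\exp(-2\alpha A^2-AH+\bar H/(4\alpha))$, while the denominator integrand is $\exp[-\alpha y^2+2\alpha A y-2\alpha A^2]$. Applying \eqref{eq1} with $a=\alpha+\beta,\ b=2\alpha A-h_1$ in the numerator and $a=\alpha,\ b=2\alpha A$ in the denominator, the ratio collapses to $\sqrt{\alpha/(\alpha+\beta)}\,\exp\big(\tfrac{(2\alpha A-h_1)^2}{4(\alpha+\beta)}-\alpha A^2-AH+\tfrac{\bar H}{4\alpha}\big)$. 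Taking $-\log$, expanding $(2\alpha A-h_1)^2=4\alpha^2A^2-4\alpha A h_1+h_1^2$, and using $\alpha A^2-\alpha^2A^2/(\alpha+\beta)=\alpha\beta A^2/(\alpha+\beta)$ produces exactly the stated formula.

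I do not expect a genuine obstacle here, since the entire argument is an explicit Gaussian reduction once the coarse-grained energies are in hand. The only places demanding care are the bookkeeping of the $y$-independent constants coming from $E^{\rm cg}$ and $E^{\rm cg}_{\mathbf{h}=0}$ (in particular the signs of the $AH$ and $\bar H/(4\alpha)$ contributions), and the termwise decoupling that identifies $\inf_{v_1=0}J_\infty$ with $-\bar H/(4\alpha)$; everything else is the routine quadratic algebra that is already exercised in Section \ref{sec: hamonic no-force}.
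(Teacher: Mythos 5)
Your proposal is correct: I checked the Gaussian reductions and the final bookkeeping ($(2\alpha A-h_1)^2/(4(\alpha+\beta))$ expanded, $\alpha A^2-\alpha^2A^2/(\alpha+\beta)=\alpha\beta A^2/(\alpha+\beta)$), and they reproduce the stated formula. However, your route is genuinely different from the paper's proof of this theorem. The paper does \emph{not} pass through the limiting representation \eqref{eq: thermodynamic limit with forces}; instead it works at finite $N$ with the decomposition $G_N(A)=N[W_N^P(A)-W(A)]+\log(g_{N,A}(0)/g^P_{N,A}(0))$ coming from Lemma \ref{lem: aulem 1}: it solves for $\sigma_N$ explicitly, splits $N[W_N(A)-W(A)]$ into three terms $(I)+(II)+(III)$, carries out a lengthy exact quadratic expansion in $N$, takes $N\to\infty$, and observes that the density ratio is unchanged from the unforced harmonic case so its logarithm vanishes in the limit. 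Your argument instead specialises the already-proved general Theorem \ref{theo: thermodynamic limit with forces} together with the explicit coarse-grained energy $E^{\rm cg}(A,y)=2\alpha A(A-y)+AH-\bar H/(4\alpha)$ (which the paper derives in the subsequent subsection) and evaluates two one-dimensional Gaussians. What your approach buys is brevity and transparency; what it gives up is independence: the paper's finite-$N$ computation serves as a self-contained verification of the general theory in the harmonic case (indeed the paper performs essentially your calculation afterwards only as a consistency check, remarking that it is ``in accordance with the general result''), and it additionally yields the explicit $O(N^{-1})$ error terms for $G_N(A)$ itself, which your limit-only argument does not produce. If the intent of Section \ref{sec: appendix} is to independently corroborate the general theorems, your derivation is logically valid but loses that corroborating value; as a proof of the stated identity on its own terms, it is complete, provided you note that the quadratic data satisfy Assumption \ref{ass: assumption} (with $\kappa_1=\kappa_2=2\alpha$, $\varsigma_1=\varsigma_2=2(\alpha+\beta)$) so that Theorems \ref{theo: coarse-grained energy} and \ref{theo: thermodynamic limit with forces} apply.
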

\begin{proof}
In this case
\begin{align*}
W(A)&=\sup_{\sigma}\{\sigma A-\log\int_{\R}\exp(-\psi(y)+\sigma y)\,dy\}
\\&=\sigma_0 A-\log\int_{\R}\exp(-\psi(y)+\sigma_0 y)\,dy,
\end{align*}
where $\sigma_0=2\alpha A$,  which is obtained similarly as in the case without forces. And,
\begin{align*}
W_N(A)&=\sup_{\sigma\in A}\Big\{\sigma A-\frac{1}{N}\int_{\R}\exp[-(\psi_1(y)+P(y)+\sigma y]\,dy-\frac{1}{N}\sum_{i=2}^N\exp[-\psi_i(y)+\sigma y]\,dy\Big\},
\\&=\sigma_N A-\frac{1}{N}\int_{\R}\exp[-(\psi_1(y)+P(y)+\sigma_N y]\,dy-\frac{1}{N}\sum_{i=2}^N\exp[-\psi_i(y)+\sigma_N y]\,dy
\end{align*}
where $\sigma_N$ solves
\begin{align*}
A&=\frac{1}{N}\frac{\int_{\R}y\exp[-(\psi_1(y)+P(y))+\sigma y]\,dy}{\int_{\R}\exp[-(\psi_1(y)+P(y))+\sigma y]\,dy}+\frac{1}{N}\sum_{i=2}^N\frac{\int_{\R}y\exp[-\psi_i(y)+\sigma y]\,dy}{\int_{\R}\exp[-\psi_i(y)+\sigma y]\,dy}
\\&=\frac{1}{N}\left[\frac{\sigma-h_1}{2(\alpha+\beta)}+\sum_{i=2}^N\frac{\sigma-h_i}{2\alpha}\right],
\end{align*}
which results in
\begin{equation}
\sigma_N=\frac{1}{1-\frac{\beta}{(\alpha+\beta)N}}\left(2\alpha A+\frac{1}{N}\sum_{i=1}^Nh_i-\frac{\beta h_1}{N(\alpha+\beta)}\right).
\end{equation}
Next we compute
\begin{align}
N[W_N(A)-W(A)]&=N(\sigma_N-\sigma_0)A-\log\frac{\int_{\R}\exp[-(\psi_1(y)+P(y))+\sigma_N y]\,dy}{\int_{\R}\exp[-\psi(y)+\sigma_0 y]\,dy}-\sum_{i=2}^N\log\frac{\int_{\R}\exp[-\psi_i(y)+\sigma_N y]\,dy}{\int_{\R}\exp[-\psi(y)+\sigma_0 y]\,dy}	
\\&=(I)+(II)+(III).
\end{align}
The first term:
\begin{align*}
(I)&=NA\left[\frac{1}{1-\frac{\beta}{(\alpha+\beta)N}}\left(2\alpha A+\frac{1}{N}\sum_{i=1}^Nh_i-\frac{\beta h_1}{N(\alpha+\beta)}\right)-2\alpha A\right]
\\&=\frac{1}{1-\frac{\beta}{(\alpha+\beta)N}}\left[ \frac{2\alpha\beta A^2}{\alpha+\beta}+A\left(\sum_{i=1}^N h_i-\frac{\beta h_1}{\alpha+\beta}\right)\right].
\end{align*}
The second term:
\begin{align*}
(II)&=-\log\frac{\exp\Big[\frac{(\sigma_N-h_1)^2}{4(\alpha+\beta)}\Big]\sqrt{\frac{\pi}{\alpha+\beta}}}{\exp\Big[\frac{\sigma_0^2}{4\alpha}\Big]\sqrt{\frac{\pi}{\alpha}}}	
\\&=\frac{1}{2}\log\frac{\alpha+\beta}{\alpha}-\left[\frac{(\sigma_N-h_1)^2}{4(\alpha+\beta)}-\frac{\sigma_0^2}{4\alpha}\right]
\\&=\frac{1}{2}\log\frac{\alpha+\beta}{\alpha}-\left[\frac{1}{\left(1-\frac{\beta}{(\alpha+\beta)N}\right)^2}\frac{1}{4(\alpha+\beta)}\left(2\alpha A-h_1+\frac{1}{N}\sum_{i=1}^Nh_i\right)^2-\frac{1}{4\alpha}4\alpha^2 A^2\right]
\\&=-\frac{1}{\left(1-\frac{\beta}{(\alpha+\beta)N}\right)^2}\left[\frac{\alpha^2 A^2}{\alpha+\beta}-\left(1-\frac{\beta}{(\alpha+\beta)N}\right)^2\alpha A^2-\frac{\alpha A h_1}{\alpha+\beta}+\frac{h_1^2}{4(\alpha+\beta)}\right]
\\&\qquad-\frac{1}{\left(1-\frac{\beta}{(\alpha+\beta)N}\right)^2}\left[\frac{1}{4(\alpha+\beta)}\left(\frac{2}{N}(2\alpha A-h_1)\sum_{i=1}^Nh_i+\frac{1}{N^2}(\sum_{i=1}^Nh_i)^2\right) \right]
\\&\qquad+\frac{1}{2}\log\frac{\alpha+\beta}{\alpha}.
\end{align*}
The third term:
\begin{align*}
(III)&=-\sum_{i=2}^N\log\frac{\exp\Big[\frac{(\sigma_N-h_i)^2}{4\alpha}\Big]\sqrt{\frac{\pi}{\alpha}}}{\exp\Big[\frac{\sigma_0^2}{4\alpha}\Big]\sqrt{\frac{\pi}{\alpha}}}
\\&=-\sum_{i=2}^N\left(\frac{(\sigma_N-h_i)^2}{4\alpha}-\frac{\sigma_0^2}{4\alpha}\right)
\\&=-\frac{1}{4\alpha}\sum_{i=2}^N(\sigma_N^2-\sigma_0^2-2h_i\sigma_N+h_i^2)
\\&=-\frac{1}{4\alpha}\left[(N-1)(\sigma_N^2-\sigma_0^2)-2\sigma_N\sum_{i=2}^Nh_i+\sum_{i=2}^N h_i^2\right]
\\&=-\frac{1}{4\alpha}\left[(N-1)(2\alpha A)^2\left(\frac{1}{\left(1-\frac{\beta}{(\alpha+\beta)N}\right)^2}-1\right)-2\sigma_N\sum_{i=2}^Nh_i+\sum_{i=2}^N h_i^2\right]
\\&\qquad -\frac{N-1}{4\alpha}\frac{1}{\left(1-\frac{\beta}{(\alpha+\beta)N}\right)^2}\left[\Big(\frac{1}{N}\sum_{i=1}^N h_i-\frac{\beta h_1}{N(\alpha+\beta)}\Big)^2+4\alpha A \Big(\frac{1}{N}\sum_{i=1}^N h_i-\frac{\beta h_1}{N(\alpha+\beta)}\Big)\right]
\\&=-\frac{1}{4\alpha}\left[\frac{1}{1-\frac{\beta}{(\alpha+\beta)N}}\left(\frac{1}{1-\frac{\beta}{(\alpha+\beta)N}}+1\right)\frac{4\alpha^2\beta A^2}{\alpha+\beta}\frac{N-1}{N}-2\sigma_N\sum_{i=2}^Nh_i+\sum_{i=2}^N h_i^2\right]
\\&\qquad -\frac{N-1}{N}A\left(\sum_{i=1}^Nh_i-\frac{\beta h_1}{\alpha+\beta}\right)
\\&-\frac{1}{4\alpha}\frac{N-1}{N^2}\frac{1}{\left(1-\frac{\beta}{(\alpha+\beta)N}\right)^2}\Big(\sum_{i=1}^N h_i-\frac{\beta h_1}{(\alpha+\beta)}\Big)^2.
\end{align*}
Bring all three terms together we obtain
\begin{align*}
&N[W_N(A)-W(A)]
\\&=\frac{1}{2}\log\frac{\alpha+\beta}{\alpha}+\frac{1}{1-\frac{\beta}{(\alpha+\beta)N}}\left[ \frac{2\alpha\beta A^2}{\alpha+\beta}+A\left(\sum_{i=1}^N h_i-\frac{\beta h_1}{\alpha+\beta}\right)\right]
\\&\qquad-\frac{1}{\left(1-\frac{\beta}{(\alpha+\beta)N}\right)^2}\left[\frac{\alpha^2 A^2}{\alpha+\beta}-\left(1-\frac{\beta}{(\alpha+\beta)N}\right)^2\alpha A^2-\frac{\alpha A h_1}{\alpha+\beta}+\frac{h_1^2}{4(\alpha+\beta)}\right]
\\&\qquad-\frac{1}{4\alpha}\left[\frac{1}{1-\frac{\beta}{(\alpha+\beta)N}}\left(\frac{1}{1-\frac{\beta}{(\alpha+\beta)N}}+1\right)\frac{4\alpha^2\beta A^2}{\alpha+\beta}\frac{N-1}{N}-2\sigma_N\sum_{i=2}^Nh_i+\sum_{i=2}^N h_i^2\right]
\\&\qquad-\frac{N-1}{N}A\left(\sum_{i=1}^Nh_i-\frac{\beta h_1}{\alpha+\beta}\right)
\\&\qquad-\frac{1}{\left(1-\frac{\beta}{(\alpha+\beta)N}\right)^2}\left[\frac{1}{4(\alpha+\beta)}\left(\frac{2}{N}(2\alpha A-h_1)\sum_{i=1}^Nh_i+\frac{1}{N^2}(\sum_{i=1}^Nh_i)^2\right) \right]
\\&\qquad -\frac{1}{4\alpha}\frac{N-1}{N^2}\frac{1}{\left(1-\frac{\beta}{(\alpha+\beta)N}\right)^2}\Big(\sum_{i=1}^N h_i-\frac{\beta h_1}{(\alpha+\beta)}\Big)^2.
\end{align*}
Taking the limit $N\to\infty$, we get
\begin{align*}
\lim_{N\rightarrow \infty}[N(W_N(A)-W(A))]=\frac{1}{2}\log\frac{\alpha+\beta}{\alpha}+\frac{\alpha\beta A^2}{\alpha+\beta}+\frac{\alpha A h_1}{\alpha+\beta}-\frac{h_1^2}{4(\alpha+\beta)}+AH-\frac{1}{4\alpha}\bar{H}.
\end{align*}

The ratio $\frac{g^P_{N,A}(0)}{g_{N,A}(0)}$ is the same as in Section \ref{sec: hamonic no-force}. The assertion of the theorem is then followed from the above limit.
\end{proof}
\subsection{Finite coarse-grained energy and representation of the thermodynamic limit}
In the quadratic case, $\psi(y)=\alpha y^2$, then
\begin{equation}
W(y)=\sigma_0 y-\log\int \exp[-\psi(z)+\sigma_0 z]\,dz = \alpha y^2+\frac{1}{2}(\log\alpha-\log\pi),
\end{equation}
and $W^*(y)=\frac{1}{4\alpha}y^2+\frac{1}{2}(\log\pi-\log\alpha)$.
In this case, $\lambda$ satisfies
\begin{equation*}
u_i-u_{i-1}=\frac{1}{2\alpha}(-h_i +\lambda).
\end{equation*}
We obtain $\lambda=\frac{2\alpha(NA-y)}{N-1}+\frac{1}{N-1}\sum\limits_{i=2}^N h_i$. Therefore,
\begin{align*}
E_N^{\rm cg}(y)&=\sum\limits_{i=2}^N\left[\frac{1}{4\alpha}(-h_i+\lambda)^2+\frac{1}{2\alpha}h_i(-h_i+\lambda)-\alpha A^2\right]
\\&=\sum\limits_{i=2}^N\left[-\frac{1}{4\alpha}h_i^2+\frac{1}{4\alpha} \lambda^2-\alpha A^2\right]
\\&=(N-1)\frac{1}{4\alpha}(\lambda^2-4\alpha^2 A^2)-\frac{1}{4\alpha}\sum\limits_{i=2}^Nh_i^2
\\&=(N-1)\frac{1}{4\alpha}\left[\left(2\alpha A+\frac{2\alpha}{N-1}(A-y)+\frac{1}{N-1}\sum\limits_{i=2}^N h_i\right)^2-4\alpha^2 A^2\right]-\frac{1}{4\alpha}\sum\limits_{i=2}^Nh_i^2
\\&=(N-1)\frac{1}{4\alpha}\left[4\alpha A\left(\frac{2\alpha}{N-1}(A-y)+\frac{1}{N-1}\sum\limits_{i=2}^N h_i\right)+\left(\frac{2\alpha}{N-1}(A-y)+\frac{1}{N-1}\sum\limits_{i=2}^N h_i\right)^2\right]
\\&\qquad-\frac{1}{4\alpha}\sum\limits_{i=2}^Nh_i^2
\\&=2\alpha A(A-y)+A\sum_{i=2}^Nh_i-\frac{1}{4\alpha}\sum\limits_{i=2}^Nh_i^2+\frac{1}{4\alpha(N-1)}\left(2\alpha(A-y)+\sum\limits_{i=2}^N h_i\right)^2
\end{align*}
Taking the limit $N\to \infty$, we obtain
\begin{equation}
\lim_{N\to \infty} E_N^{\rm cg}(y)=2\alpha A(A-y)+AH-\frac{1}{4\alpha}\bar{H}=E^{\rm cg}(y)
\end{equation}

Further more
\begin{equation*}
|E_N^{\rm cg}(y)-E^{\rm cg}(y)|\leq \frac{1}{N-1}(c_1 y^2+c_2y +c_3).
\end{equation*}

The thermodynamic limit can be represented as
\begin{align*}
G_\infty&=\frac{1}{2}\log\frac{\alpha+\beta}{\alpha}+\frac{\alpha\beta A^2}{\alpha+\beta}+\frac{\alpha A h_1}{\alpha+\beta}-\frac{h_1^2}{4(\alpha+\beta)}+AH-\frac{1}{4\alpha}\bar{H}
\\&=-\log \frac{\int \exp[-(\psi(y)+P(y)+h_1 y)-E^{\rm cg}(y)]\,dy}{\int \exp[-\psi(y)-E_{\mathbf{h}=0}^{\rm cg}(y)]\,dy},
\end{align*}
which is in accordance with the general result in Section \ref{sec: forces}.
\subsection{Harmonic coarse-graining}
In this section, we provide a direct method to coarse-graining for the harmonic case. We consider as before the potential energy
\begin{equation}
V(u) = \sum_{i=1}^N \psi(u_i - u_{i-1}),
\end{equation}
and the perturbed energy
\begin{equation}
    V(u) + P(u_1) = \sum_{i=1}^N \psi(u_i - u_{i-1}) + P(u_1),
\end{equation}
where we consider the harmonic case $\psi(r) = K_1 r^2$ and $P(r) = K_2 r^2.$
We are interested in the free energy difference
\begin{equation}
    F_N(x,P) - F_N(x,0) = - \log \int_{\R^{N-1}} \exp( - V(u) - P(u)) + \log \int_{\R^{N-1}} \exp( - V(u) ). 
\end{equation}
As seen above, this can be analytically computed.  However, we consider
coarse-graining the potential energy and using the free energy difference of
the coarse-grained model to 
approximate the free energy difference for the full model.  We show that
the free energy difference for the coarsened model is identical to that
of the full model.

Since our interactions are first-neighbor only and the defect potential is 
restricted to the first bond, we leave the first bond fully resolved and
use a uniform coarsening elsewhere.  That is, associated to the displacement
$w \in \R^M,$ we have the piecewise linear interpolation operator 
$I_h : \R^M \rightarrow \R^N$ where $(I_h w)_{p (j-1) + 1} = u_j.$  
In particular, $N = p (M-1) + 1.$
The coarse-grained potential energy is then
\begin{equation}
    V_{\rm cg}(w) 
		= \psi(w_1 - w_0) + \sum_{i=2}^M p \psi(p^{-1} ( w_j - w_{j-1})) 
		= K_1 w_1^2 + \sum_{i=2}^M K_1 p^{-1} (w_j - w_{j-1})^2.
\end{equation}
The technique given here for computing the free energy will differ from that in 
the main working note.  Here, we successively complete squares on the energy, starting from  
$w_{M-1},$ and we define a recurrence for the coefficients $c_i, d_i,$ and $f_i$ that are
introduced in the expansion.  
\begin{equation*}
\begin{split}
V_{\rm cg}(w) 
  &= K_1 w_1^2 + \sum_{i=2}^N K_1 p^{-1} (w_j - w_{j-1})^2 \\
  &=  K_1 p^{-1} N^2 x^2 - 2 K_1 p^{-1} N x w_{M-1}+ \sum_{i=2}^{M-1} K_1 p^{-1}  \left[ 2 w_j - 2 w_j w_{j-1} \right]
	    + K_1 (1+ p^{-1} ) w_1^2 \\
  &= K_1 p^{-1} \left[ N^2 x^2 + 2 (w_{N-1} - \frac{1}{2} (N x + w_{N-2}))^2 - \frac{1}{2} (N x + w_{N-2})^2 \right. \\
	& \quad \left. +  \sum_{i=2}^{M-2} (  2 w_j - 2 w_j w_{j-1} )  \right]
	    + K_1 (1+ p^{-1} ) w_1^2 	 \\
  &= K_1 p^{-1} \left[ 
	f_i N^2 x^2 + \sum_{i=m}^{M-1} c_i (w_i - c_i^{-1} (w_{i-1} + d_i N x))^2 
	- c_{m}^{-1} (w_{m-1} + d_{m} N x)^2  \right. \\
	& \quad \left. +  \sum_{i=2}^{m-1} (  2 w_j - 2 w_j w_{j-1} )  \right]
	    + K_1 (1+ p^{-1} ) w_1^2 	
\end{split}	
\end{equation*}
where the coefficients satisfy the following recurrences:
\begin{align*}
c_{i-1} &= 2 - c_i^{-1} \qquad c_{M-1} = 2 \\
d_{i-1} &= \frac{d_i}{c_i} \qquad d_{M-1} = 1 \\
f_{i-1} &= f_i - \frac{d_i^2}{c_i} \qquad f_{M-1} = 1
\end{align*}
we then find for $i = 2,\dots,M-1,$
\begin{align*}
c_{i} &= \frac{M - i + 1}{M - i} \\
d_{i} &= \frac{1}{M-i} \\
f_{i} &= \frac{1}{M-i}
\end{align*}

So, for the coarse-grained energy, we compute:
\begin{equation*}
\begin{split}
V_{\rm cg}(w) 
  &= K_1 p^{-1} \left[ 
	\sum_{i=2}^{M-1} c_i (w_i - c_i^{-1} (w_{i-1} + d_i N x))^2 
	 \right. \\
	& \quad \left. +  \left(\frac{M}{M-1} + p - 1  \right) \left(w_1 -  \frac{d_{1} N x}{c_{1} + p -1}   \right)^2  + \frac{N^2 x^2 p}{M + (p-1)(M-1)}\right] 
\end{split}	
\end{equation*}
where the lowest order terms do not satisfy the recursion because of the factor of $p,$ but they 
are computed manually.  Using the same recursion, we can also transform the energy with the 
defect, taking care to modify the lowest term.  
\begin{equation*}
\begin{split}
V_{\rm cg}(w) + P(w_1)
  &= K_1 p^{-1} \left[ 
	\sum_{i=2}^{M-1} c_i (w_i - c_i^{-1} (w_{i-1} + d_i N x))^2 
	 \right] \\
	& \quad  +  \left( \frac{K_1}{p} \left(\frac{M}{M-1} + p - 1  \right) + K_2 \right) \left(w_1 -  \frac{K_1 d_{1} N x}{K_1 (c_{1} + p -1) + K_2 p}   \right)^2 \\
	& \quad + \frac{K_1 N^2 x^2 (K_1 + K_2) }{K_1( M + (p-1)(M-1)) + K_2 p (M-1)}  
\end{split}	
\end{equation*}
When we take free energy differences, we can directly integrate starting from $w_{M-1}$ downwards, and the 
only differences in the two energies are in the lowest terms.  Also, we note that 
$M + (p-1)(M-1) = N,$ and $p(M-1) = N-1,$ so that the $p$ will fall out.  
We have 
\begin{equation*}
F_M^{\rm cg}(x, P) -F_M^{\rm cg}(x, 0)
= \frac{K_1 N^2 x^2 (K_1 + K_2) }{K_1 N + K_2 (N-1)}   -
\frac{N^2 x^2}{N} + \frac{1}{2} \log \left[ \frac{K_1 N + K_2 (N-1)}{ K_1 N} \right]
\end{equation*}
We note that this is exactly the result arrived at in Section \ref{sec: hamonic no-force}, and that 
there is no $p$ or $M$ dependence here.  That is, any uniform coarse-graining of the chain that leaves the first
bond refined exactly computes the free energy difference.

\section{Numerical Free Energy}
\label{sec: numerical}
We present numerical experiments to illustrate the results of the paper using standard free energy computation techniques as in~\cite{lrs}.  We compare the finite chain energy $G_N,$ coarse grained energy $G^{\rm cg}_N,$ and $G_\infty$ computed using numerical quadrature of the limit expression.  We see the theoretically expected $N^{-1}$ rate of convergence, where the asymptotic rate is observed to be valid even for small $N,$ and we numerically demonstrate that
$G_N - G^{\rm cg}_N$ also seems to decay as $N^{-1}.$   

\subsection{Free Energy Perturbation}
A standard approach for computing free energy differences is called
the free energy perturbation technique which rewrites the free energy
difference as an ensemble average of the energy perturbation 
with respect to the invariant measure of the unperturbed system.   To compute the free energy difference 
between $V$ and $V^P,$  we write 
\begin{align*}
G_N = F^P_N - F_N 
&= - \log \frac{\int_\Gamma \exp(- V^P(z)) dz}{\int_\Gamma \exp(- V(z)) dz}  \\
&= - \log \frac{\int_\Gamma \exp(- (V^P(z)- V(z))) \exp( - V(z)) dz}{\int_\Gamma \exp(- V(z)) dz}  \\
&= - \log \langle \exp(- P(u) ) \rangle_{\mu_0} 
\end{align*}
Therefore, one samples $\exp(- P(u) )$ with respect to the invariant
measure given by $V.$  The last step uses the assumption of a separable Hamiltonian.

\subsection{Staging}
Direct sampling to compute the free energy perturbation can be very slow to converge
when $V^P - V$ is large, particularly when the minima of $V$ and $V^P$ are separated.   
Many samples are chosen near the global minimum of $V,$ which may not
significantly contribute to the value of the integral.  
Instead, one can employ staging, where 
the free energy difference is broken into a telescopic sum.   That is, we write
$V_\lambda = V + \lambda P,$ and $F_\lambda = - \beta^{-1} \log \int_\Gamma V_\lambda(z) \, dz.$
Then the free energy difference can be written
\begin{equation*}
 F_N^P - F_N = \sum_{i=1}^{N_{\rm stages}} F_{\lambda_i} - F_{\lambda_{i-1}},
\end{equation*}
so that one must sample $\exp( - \beta (\lambda_i - \lambda_{i-1}) P)$ with respect to the invariant
measure corresponding to $V_{\lambda_{i-1}}$.  Since the energies $V_{\lambda_i}$ and $V_{\lambda_{i-1}}$ are closer
than $V$ and $V^P,$ it can convergence and reduce the overall computed variance.

\subsection{Metropolis Adjusted Langevin Algorithm}
In the following, we apply the Metropolis Adjusted Langevin Algorithm (MALA),
which proceeds as a series of overdamped Langevin steps followed by an
accept/reject step:
\begin{equation*}
q^* = q^n - h \nabla V(q^n) + \sqrt{ h } G  \qquad \text{where }
G \sim \mathcal{N}(0,Id)
\end{equation*}
Then we accept the new step and set $q^{n+1} = q^*$ with probability
\begin{equation*}
r(q^n, q^*) =  \min \left(1,   \frac{T(q^*, dq^n) \mu(d q^*)}{T(q^n, dq^*) \mu(d q^n)} \right) 
\end{equation*}
where 
\begin{equation*}
T(q,dq') =  \left( \frac{1}{4 \pi h} \right)^{d/2}  \exp\left( \frac{-  | q' - q + h \nabla V|^2}{4 h} \right).
\end{equation*}
Otherwise, we set $q^{n+1} = q^n.$  The accept/reject step assures that we
are sampling the invariant measure $\mu{d q}$ for any stepsize $h.$
The choice of $h$ is driven by two competing interests: larger $h$
speeds up convergence from the initial condition to the invariant measure,
whereas smaller $h$ means that a step is more likely to be accepted.

\subsection{Unforced Nonlinear Chain}

We consider the nonlinear energy
\begin{equation}
\label{eq:nonlinear_psi}
\psi(r) = \frac{1}{2} (r-1)^4 + \frac{1}{2} r^2,
\end{equation}
which satisfies the growth assumptions~\eqref{ass: assumption} and was also the test case used 
in~\cite{BlancBrisLegollPatz2010}.
We take a harmonic defect perturbation $P(y) = y^2$ and choose $A=2.$
\begin{figure}
\centerline{ \includegraphics[width=3in]{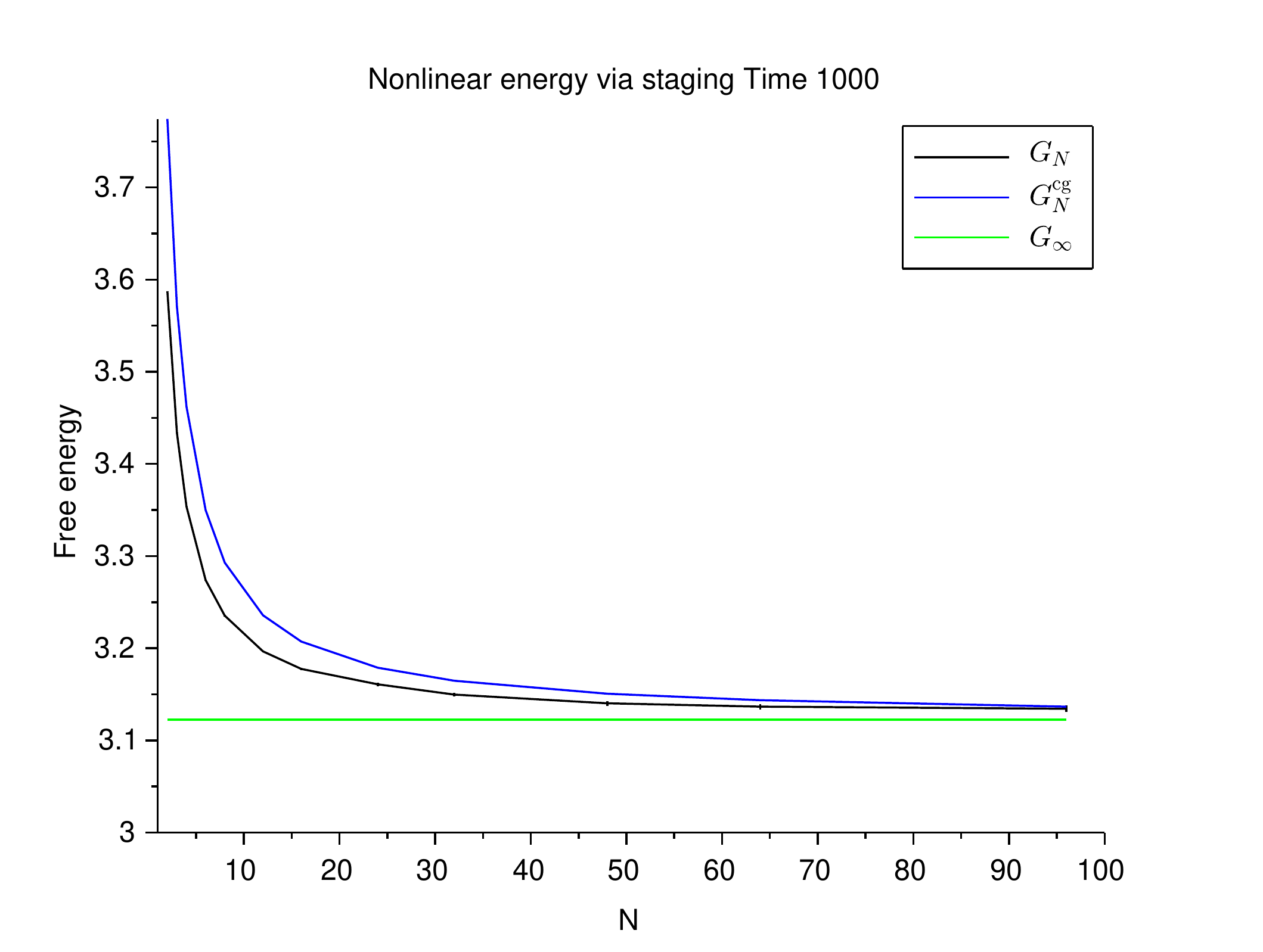}
\includegraphics[width=3in]{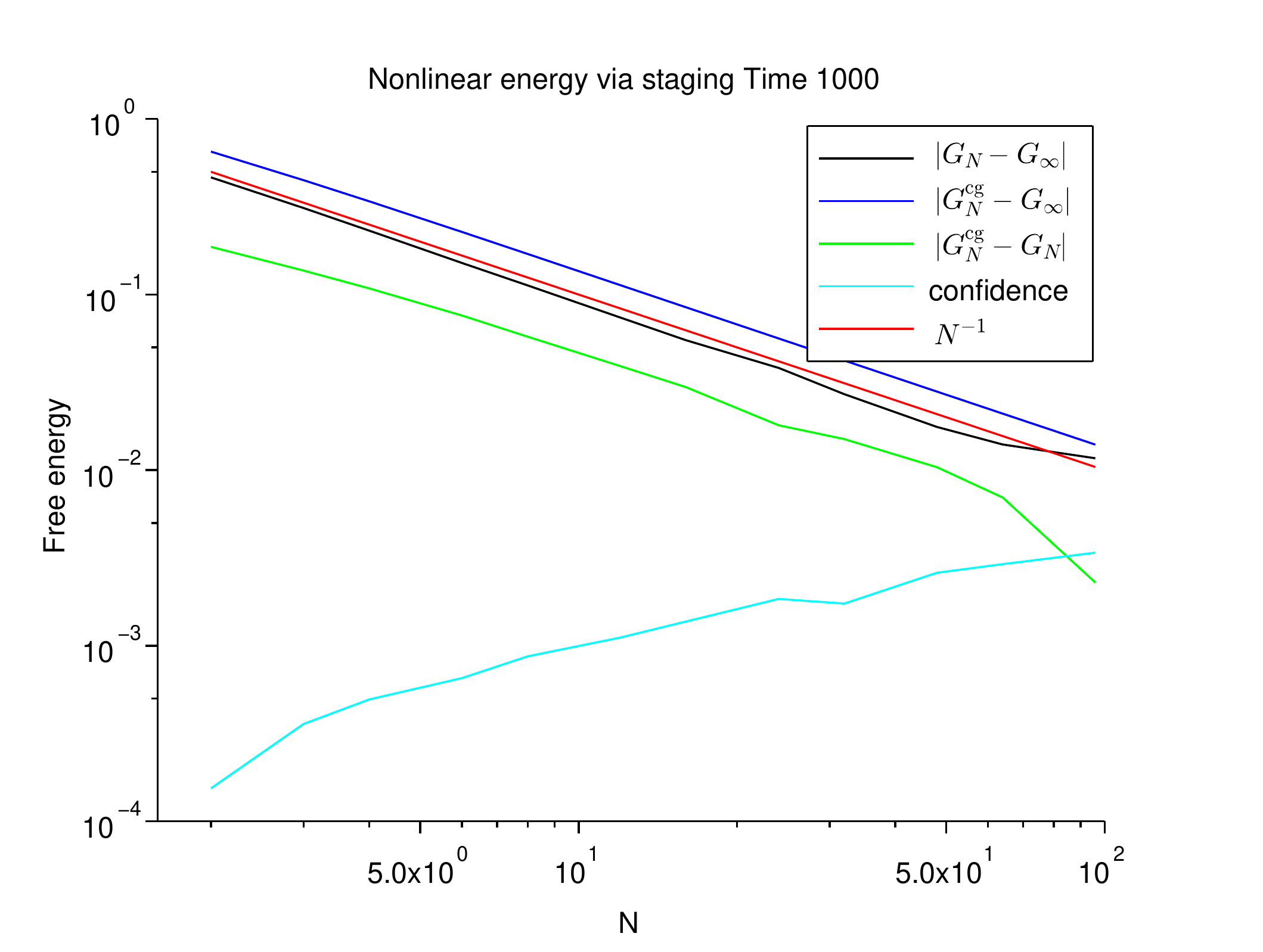}
}
\caption{\label{fig:free_en_nonlinear}For the nonlinear potential, the 
free energy difference is sampled using staging, and the result is compared to the coarse-grained
approximation.  The limiting free energy is computed via numerical quadrature and plotted in green.  On the right, we show the rate of convergence to the limiting energy $G_\infty,$ where both approximations show O($N^{-1}$) convergence.  The difference between $G_N$ and $G_N^{\rm cg}$
is also O($N^{-1}$). 
}
\end{figure}

The free difference $G_N$ is sampled using the MALA algorithm with 100 staging steps and 100 independent replicas
to compute confidence intervals.  In addition,
the coarse-grained approximation $G_N^{\rm cg}$ is also computed.  Due to the 1D nature of the problem, the minimizer for the CG energy is
given by an affine function, so that the computations involved are low-dimensional integrals.  First the energy density 
\begin{equation*}
W(A)=\sup_{\sigma\in \R}\Big\{\sigma A -  \log \int_{\R}\exp(-\psi(y)+\sigma y)\,dy\Big\}
\end{equation*}
is computed by quadrature, giving coarse-grained energy
\begin{equation*}
E_N^{\rm cg}(y)=(N-1)\left[W\Big(A+\frac{A-y}{N-1}\Big)-W(A)\right].
\end{equation*}
Then we may compute
\begin{equation*}
G_N^{\rm cg}=-\log \frac{\int \exp(-P(y)-\psi(y)-E_N^{\rm cg}(y)\,dy}{\int \exp(-\psi(y)-E_N^{\rm cg}(y))\,dy}
\end{equation*}
using standard quadrature techniques.  In Figure~\ref{fig:free_en_nonlinear}, the sampled free energy
difference $G_N$ is compared to $G_N^{\rm cg}$ as well as $G_\infty.$  The O($N^{-1}$) convergence
is seen throughout the chosen range of $N.$  We observe through the numerics that $|G_N - G_N^{\rm cg}|$ 
is also O($N^{-1}$). At present, this can not be explained by our theory.

\subsection{External Forces}

As a second example, we compute the free energy difference with external forces but no
defect potential.  Using different decay rates for the external forces provides an analog 
for the slow decay in the elastic field that surrounds defects in higher dimensional problems.
The non-defective chain has nonlinear interaction potentials~\eqref{eq:nonlinear_psi}, and the defective
chain has external forces $f_i = i^{-p}$ on each degree of freedom $u_i,$ or 
$h_i = - \sum_{j=i}^{N-1} f_j.$   The free energy $G_N$ chain is sampled using MALA with 100 stages,
and the limiting expression for $G_\infty$~\eqref{eq: thermodynamic limit with forces} is computed 
numerically, where it is noted that the minimization problem in the limit separates into single
variable problems.  As the forces decay sufficiently fast, a Taylor series approximation is used
for all but the first four terms in $E^{cg}(A).$
\begin{figure}
\centerline{ \includegraphics[width=3in]{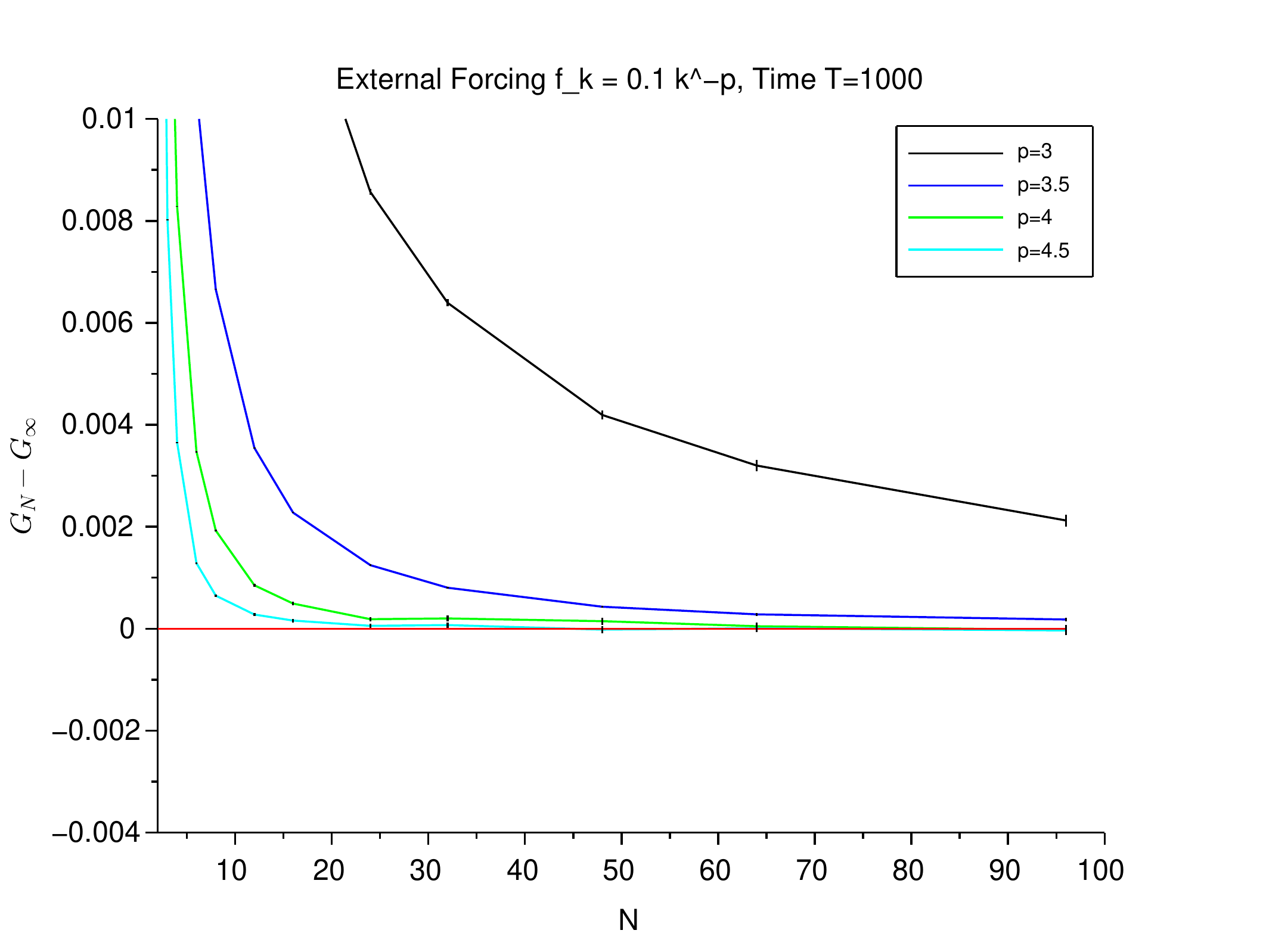}
\includegraphics[width=3in]{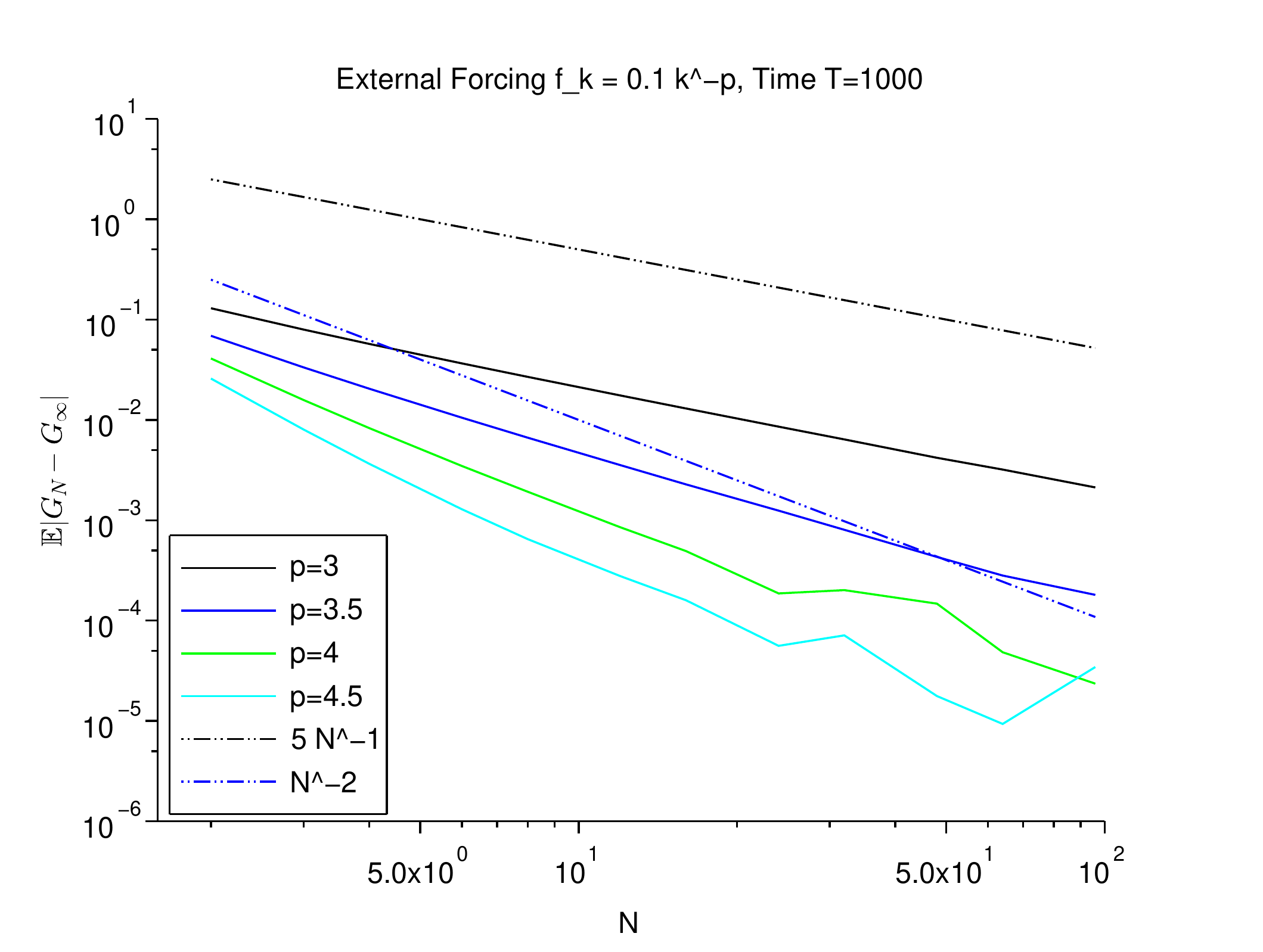}
}
\caption{\label{fig:free_en_forced} A nonlinear chain is sampled where the defect is
modeled by decaying forces $f_i = i^{-p}.$  The 
free energy difference is sampled using staging for varying $N$, and the limiting free energy is computed as in~\eqref{eq: thermodynamic limit with forces}.  On the right, we show the rate of convergence to the limiting energy $G_\infty,$ where the approximations seem to have $p$-dependent rates of
convergence.  Note that for exponents $p=4$ and $p=4.5$, the computed energy quickly approaches
the limiting energy up to statistical noise.
}
\end{figure}

In figure~\ref{fig:free_en_forced}, the differences $G_N - G_\infty$ are plotted for various rates of decay 
in the external forces $f_i = i^{-p}, \quad p = 3, 3.5, 4, 4.5.$  The observed rates of convergence depend
on the decay rate and are observed to be faster than O($N^{-1}$).  

\section{Conclusion}

We have provided a rigorous analysis of the defect-formation free energy~\eqref{eq: def of GN} 
for a one-dimensional, nearest neighbour chain with nonlinear local defect and external forces.  
The limiting energy is written in terms of a coarse-grained energy that is based on the Cauchy-Born
strain energy density.  The form of the coarse-grained energy was chosen because its variational
structure is amenable to analysis and approximation by methods in variational mechanics.

The analysis required many restrictions on the model.  The nonlinear perturbation $P$ could be 
extended to a finite region rather than the first bond without additional difficulty.  Including
interactions beyond nearest neighbour in $V$ would entail extension of the arguments here, for example 
the bonds are no longer independently distributed in Lemma~\ref{lem: aulem 1},
compare the work done for the free energy density in~\cite{BlancBrisLegollPatz2010}.  Moving beyond
one spatial dimension for the chain requires significant additional work; however, the inclusion
of external forces was motivated in part by the higher dimensional cases as a way to model
slowly-decaying stress field around a defect present in dimensions higher than
one.
\\ \ \\
\noindent\textbf{Acknowledgements:} MHD and CO were supported by ERC Starting Grant 335120.

\newcommand{\etalchar}[1]{$^{#1}$}

\end{document}